\documentclass[11pt]{article}
\usepackage{amsthm,amsmath,amssymb}
\usepackage{latexsym}
\usepackage{pst-node}
\usepackage{subfigure}
\usepackage{multirow}
\usepackage{tikz}
\usepackage{pgf}
\usepackage[colorlinks=true,citecolor=black,linkcolor=black,urlcolor=blue]{hyperref}
\usepackage[hmargin=2.8cm,vmargin=2.5cm]{geometry}

% declare theorem-like environments
\theoremstyle{plain}
\newtheorem{theorem}{Theorem}
\newtheorem{lemma}[theorem]{Lemma}
\newtheorem{corollary}[theorem]{Corollary}
\newtheorem{proposition}[theorem]{Proposition}

\newtheorem{observation}[theorem]{Observation}
\newtheorem{claim}{Claim}

\theoremstyle{definition}
\newtheorem{definition}[theorem]{Definition}
\newtheorem{question}[theorem]{Question}

\newcommand{\M}{\gamma^{\text{\tiny{ID}}}}
\newcommand{\LD}{\gamma^{\text{\tiny{LD}}}}
\newcommand{\DS}{\gamma}

\tikzstyle{graphnode}=[draw,shape=circle,draw=black,minimum size=0.5pt,inner sep=1.5pt]
\tikzstyle{edgecode}=[line width =3 pt]
\tikzstyle{edge}=[thin]

\title{Locating-dominating sets and identifying codes\\in graphs of girth at least 5}
\author{Camino Balbuena\thanks{This research was supported by the Ministry of Education and Science, Spain, and the European Regional Development Fund (ERDF) under project  MTM2011-28800-C02-02, and under the Catalonian Government project 1298 SGR2009.}\\
\small Departament de Matem\`atica Aplicada III\\[-0.8ex]
\small Universitat Polit\`ecnica de Catalunya, Barcelona, Spain\\
\small\tt m.camino.balbuena@upc.edu\\
\and
Florent Foucaud\thanks{Part of this research was done while this author was a postdoctoral researcher at the Departament de Matem\`atica Aplicada III of Universitat Polit\`ecnica de Catalunya, Barcelona, Spain, at the Department of Mathematics of University of Johannesburg, South Africa, and at LAMSADE - PSL Universit\'e Paris-Dauphine, France.}~\footnotemark[1]\\
\small LIMOS - CNRS UMR 6158\\[-0.8ex]
\small Universit\'e Blaise Pascal, France\\
\small\tt florent.foucaud@gmail.com\\
\and
Adriana Hansberg\thanks{Research partially supported by the program UNAM-DGAPA-PAPIIT under project IA103915 and by \emph{Fondo Sectorial de Investigaci\'on para la Educaci\'on} under project 219775 CB-2013-01.}~\footnotemark[1]~\thanks{Part of this research was done while this author was a postdoctoral researcher at the Departament de Matem\`atica Aplicada III of Universitat Polit\`ecnica de Catalunya, Barcelona, Spain.}\\
\small Instituto de Matem\'aticas\\[-0.8ex]
\small Universidad Nacional Aut\'onoma de M\'exico, M\'exico\\
\small\tt ahansberg@im.unam.mx\\
}

\date{April 9, 2015}

\begin{document}
\maketitle

\begin{abstract}
Locating-dominating sets and identifying codes are two closely related notions in the area of separating systems. Roughly speaking, they consist in a dominating set of a graph such that every vertex is uniquely identified by its neighbourhood within the dominating set. In this paper, we study the size of a smallest locating-dominating set or identifying code for graphs of girth at least~$5$ and of given minimum degree. We use the technique of vertex-disjoint paths to provide upper bounds on the minimum size of such sets, and construct graphs who come close to meet these bounds.
%  \vspace{\baselineskip}
% {\bf Key words.}  Identifying codes; locating-dominating sets; dominating sets; path covers; girth; minimum degree.
% \vspace{\baselineskip}
\end{abstract}

\newcommand{\minitab}[2][l]{\begin{tabular}{#1}#2\end{tabular}}

\section{Introduction}

Various forms of distinguishing problems in graphs arising from several applications have been studied. Imagine a setting where one wants to detect a hazard in a network (graph) using simple local detectors. Every network node should be within reach of some detector, say at graph distance at most~$1$: in this case the detectors must form a dominating set. If, in addition, one wants to be able to precisely locate the hazard, every node must be uniquely determined by the set of detectors monitoring (dominating) it. This is the notion of a locating-dominating set or an identifying code (depending on whether the detector nodes should be distinguished themselves).

Since the introduction of locating-dominating sets by Slater~\cite{S87,S88} and identifying codes by Karpovsky, Chakrabarty and Levitin~\cite{KCL98}, these concepts have been widely studied and applied to hazard- or fault-detection in networks and facilities~\cite{KCL98,UTS04}, routing~\cite{LTCS07}, as well as in relation with graph isomorphism~\cite{B80} and logical characterizations of graphs~\cite{KPSV04}. An online bibliography on these topics is maintained by Lobstein~\cite{biblio}. We remark that these problems belong to the more general set of distinguishing or separating problems in graphs and hypergraphs; see the concept of hypergraph \emph{separating systems}~\cite{BS07,R61} (which is also known under the name of \emph{test covers}~\cite{DHHHLRS03,MS85} or \emph{discriminating codes}~\cite{CCCHL08}, and is related to a celebrated theorem of Bondy~\cite{B72}).

In this paper, we study locating-dominating sets and identifying codes in graphs of girth at least~$5$ (that is, containing no triangle or $4$-cycle). Their behaviour in this class is quite different from the class of graphs with girth~$3$ or~$4$. We are able to give upper bounds on the smallest size of such sets in terms of the order of the graph, and discuss the tightness of our bounds.

\medskip
\noindent\emph{Definitions.} All graphs in this paper will be undirected and finite. The order of a graph will be denoted by the letter $n$. The open and closed neighbourhoods of a vertex $x$ are denoted $N(x)$ and $N[x]$, respectively, and the \emph{degree} of $x$ is the size of its open neighbourhood. A graph is \emph{cubic} if all its vertices have degree~$3$. A path along vertices $x_1,\ldots,x_k$ is denoted $x_1-\ldots-x_k$. The \emph{order} of a path is the number of its vertices, and its \emph{length} is the number of its edges (that is, its order minus one). We may also denote the concatenation of two paths $P,P'$ by $P-P'$. A \emph{Hamiltonian path} of a graph is a path containing all its vertices. Given a set $X$ of vertices in a graph $G$, $G[X]$ denotes the subgraph of $G$ induced by $X$. A graph $G$ is \emph{vertex-transitive} if, given any two vertices $x$ and $y$, there is an automorphism of $G$ mapping $x$ to $y$.

In a graph $G$, a vertex \emph{dominates} itself and all its neighbours. A set $D$ of vertices dominates vertex $x$ if some vertex of $D$ dominates $x$. Similarly, $D$ \emph{2-dominates} $x$ if at least two distinct vertices of $D$ dominate $x$. Set $D$ is called a \emph{dominating set} if $D$ dominates all vertices in $V(G)$.  If a vertex $x$ belongs to the symmetric difference $N[u]\Delta N[v]$ (i.e. $x$ dominates exactly one of $u,v$) we say that $x$ \emph{separates} $u$ from $v$.

We have the following definitions of the core concepts of this paper:

\begin{definition}[\cite{KCL98,S87,S88}]
Given a graph $G$, a subset $C$ of vertices of $V(G)$ which is both a dominating set and such that all vertex-pairs in $V(G)\setminus C$ are separated by some vertex of $C$ is called a \emph{locating-dominating set} of $G$. If \emph{all} vertex-pairs in $V(G)$ are separated by some vertex of $C$, it is called an \emph{identifying code} of $G$.
\end{definition}

Note that a graph always has a locating-dominating set, but it has an identifying code (it is \emph{identifiable}) if and only if it has no \emph{twins}, i.e. vertices with the same closed neighbourhood. However, for triangle-free graphs (and thus for graphs of girth at least~$5$), twins cannot have any common neighbour, leading to the following observation:

\begin{observation}
A triangle-free graph is identifiable if and only if it has no connected component with two vertices.
\end{observation}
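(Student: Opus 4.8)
The plan is to recast the statement using the characterization recalled just above: a graph is identifiable precisely when it has no \emph{twins}, i.e.\ no pair of distinct vertices $u,v$ with $N[u]=N[v]$. It therefore suffices to prove that, in a triangle-free graph, a pair of twins exists if and only if some connected component consists of exactly two vertices. I would establish the two implications separately.

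For the easy direction (a two-vertex component produces twins), I would start from a connected component on two vertices. Since it is connected, its two vertices $u$ and $v$ are adjacent; and since it is an entire component, neither vertex has any further neighbour. Hence $N[u]=\{u,v\}=N[v]$, so $u$ and $v$ are twins and $G$ is not identifiable.

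For the other direction (twins force a two-vertex component), I would assume $G$ has twins $u\neq v$ with $N[u]=N[v]$. Because $v\in N[v]=N[u]$ and $v\neq u$, the vertices $u$ and $v$ are adjacent. The key step is then to exclude any third vertex from their common closed neighbourhood: if some $w\in N[u]=N[v]$ satisfied $w\notin\{u,v\}$, then $w$ would be adjacent to both $u$ and $v$, creating the triangle $u-v-w$ and contradicting triangle-freeness. This is exactly the fact, noted in the paragraph preceding the statement, that twins in a triangle-free graph have no common neighbour. Consequently $N[u]=N[v]=\{u,v\}$, which forces both $u$ and $v$ to have degree $1$ with each other as their sole neighbour; thus $\{u,v\}$ is a connected component on two vertices.

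I do not expect any serious obstacle: the whole argument turns on the single triangle-free step above, and the rest is merely unwinding the definitions of twin and of identifiability. The only point needing slight care is the phrase ``connected component with two vertices''—a connected two-vertex graph is necessarily the single edge, so the adjacency of $u$ and $v$ comes for free and need not be imposed as a separate hypothesis.
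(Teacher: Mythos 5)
Your proof is correct and follows exactly the route the paper intends: it invokes the characterization of identifiability via twins (vertices $u \neq v$ with $N[u]=N[v]$) and the observation that triangle-freeness forbids twins from having a common neighbour, forcing $N[u]=N[v]=\{u,v\}$, i.e.\ a two-vertex component. The paper leaves this as an unproved observation justified by precisely that remark, so your write-up is simply a careful expansion of the paper's own one-line argument, with both directions handled correctly.
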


The minimum size of a dominating set, a locating-dominating set, and an identifying code of a graph $G$ are called the \emph{domination number} $\DS(G)$, the \emph{location-domination number} $\LD(G)$ and the \emph{identifying code number} $\M(G)$ of $G$, respectively. If $G$ is identifiable we have $\DS(G)\leq\LD(G)\leq\M(G)$.

\medskip
\noindent\emph{Related work.} A classic result in domination due to Ore~\cite{O62} is that for every graph $G$ of order $n$ with minimum degree at least~$1$, $\DS(G)\leq\frac{n}{2}$. Later, McCuaig and Shepherd~\cite{MS89} proved that besides seven exceptional graphs, if $G$ is connected and has minimum degree at least~$2$, then $\DS(G)\leq\frac{2n}{5}$. For minimum degree at least~$3$, Reed~\cite{R96} proved the bound $\DS(G)\leq\frac{3n}{8}$. More generally, it is known that any graph $G$ with minimum degree~$\delta$ has domination number $\DS(G) =  O\left(\frac{\log\delta}{\delta}\right)n$ (see~\cite{AS08}), and this bound is asymptotically tight \cite{A90}. On the other hand, for connected cubic graphs with $n\geq 9$, Kostochka and Stodolsky~\cite{KS09} proved that $\DS(G)\leq\frac{4n}{11}$.

A bound of this form does not exist for locating-dominating sets or identifying codes. Indeed, $d$-regular graphs with locating-dominating number and identifying code number of the form $n\left(1-\frac{1}{\Theta(d)}\right)$ were constructed by the second author and Perarnau~\cite{FP11}. However, these constructions contain either triangles or $4$-cycles, and the same authors showed that for any graph $G$ of order $n$, girth at least~$5$ and minimum degree~$\delta$, an (asymptotically tight) upper bound of the form $\LD(G)\leq\M(G) = O\left(\frac{\log\delta}{\delta}\right)n$ (similar to the one for dominating sets) holds. However, for small values of $\delta$ the bound of~\cite{FP11} is not meaningful; when $\delta=2$, the second author showed the bound $\M(G)\leq\frac{7n}{8}=0.875n$ in his PhD thesis~\cite{F}.

In this paper, we study the following question:

\begin{question}
What are tight upper bounds on $\LD(G)$ and $\M(G)$ for graphs $G$ of given (small) minimum degree $\delta\geq 2$ and girth at least~$5$?
\end{question}

\renewcommand{\arraystretch}{0.9}
\begin{table}[ht!]
\centering
\scalebox{0.85}{
    \begin{tabular}{c||c|c|c||c|c|c}
 &  \multicolumn{3}{c}{\textbf{Location-domination number}} &   \multicolumn{3}{c}{\textbf{Identifying code number}}\\
 & \multicolumn{3}{|c||}{} & \multicolumn{3}{|c}{}\\
 & \multicolumn{2}{c|}{largest known examples} &   & \multicolumn{2}{c|}{largest known examples} & \\[0.1cm]
& & & upper bound & & & upper bound\\
& small & arb. large & & small & arb. large & \\[0.2cm]
\hline
\hline
& & & & & & \\[-0.2cm]
\multirow{2}*{$\delta=2$}   & $0.5$ & $0.5-\epsilon$  & \multirow{5}*{\minitab[c]{$0.5$\\Thm.~\ref{thm:vdp-LD}}} & $\frac{5}{7}$ & $0.6-\epsilon$ & \multirow{5}*{\minitab[c]{$\frac{5}{7} < 0.715$\\Thm.~\ref{thm:vdp-ID-delta-2}}}\\
& $C_6$ \cite{S88} & Prop.~\ref{prop:example-LD-n/2} &  & $C_7$ \cite{BCHL04} & Prop.~\ref{prop:example-ID-3n/5} & \\
& & & & & & \\[-0.3cm]\cline{1-3}\cline{5-6}
& & & & & & \\[-0.2cm]
\multirow{2}*{$\delta\geq 3$}  & \multirow{5}*{\minitab[c]{$\frac{3}{7}>0.428$\\Prop.~\ref{prop:heawood-LD}}} & $\frac{4}{11}-\epsilon>0.363$ &  & \multirow{5}*{\minitab[c]{0.5\\Prop.~\ref{prop:G12-ID}}} & $\frac{5}{11}-\epsilon>0.454$ & \\
&  & Prop.~\ref{prop:example-LD-conn-4/11} &  &  & Prop.~\ref{prop:example-ID-5n/11} & \\
& & & & & & \\[-0.3cm]\cline{3-4}\cline{6-7}\cline{1-1}
&  & & & & & \\[-0.2cm]
\multirow{2}*{cubic}  &   & $\frac{1}{3}$ & $\frac{22}{45}<0.489$ & & $0.4$ & $\frac{31}{45}<0.689$\\
&  & Thm.~\ref{th:LB-Delta}  \cite{S95} & Cor.~\ref{cor:LD} &  & Thm.~\ref{th:LB-Delta} \cite{KCL98} & Cor.~\ref{cor:ID}\\
    \end{tabular}
}
\caption{Upper bounds and largest known ratios (in terms of the graph's order) of location-domination and identifying code numbers in connected graphs of girth at least~$5$ and minimum degree $\delta$.}
\label{tab:bounds}
\end{table}

\noindent\emph{Our results and structure of the paper.} We study the cases where the minimum degree $\delta\in\{2,3\}$, and also the case of cubic graphs. In Section~\ref{sec:UB}, we give upper bounds on parameters $\LD$ and $\M$ for these graph classes, and discuss their tightness by constructing examples with large values of $\LD$ and $\M$ in Section~\ref{sec:cons}. We briefly conclude in Section~\ref{sec:conclu}. A summary of our results is given in Table~\ref{tab:bounds}.  To obtain the upper bounds, we use the technique of building vertex-disjoint paths of the graph, that was introduced by Reed~\cite{R96} for dominating sets and was used in related works, see e.g.~\cite{KS09,XSC06}.

\section{Upper bounds using vertex-disjoint path covers}\label{sec:UB}

This section contains the proofs of our upper bounds. We start with some preliminary tools.

\subsection{Preliminary lemmas and definitions}

Next, we give useful characterizations of locating-dominating sets and identifying codes in graphs of girth~$5$.

\begin{lemma}\label{lemma:girth5-LD}
Let $G$ be a graph of girth at least~5, and let $C$ be a dominating set of $G$. Let $X= \{x \in V(G) \setminus C \; : \; |N(x) \cap C| = 1\}$. Then $C$ is a locating-dominating set of $G$ if and only if there is an injective function $f:X \rightarrow C$ such that $f(x) \in C \cap N(x)$ for all $x \in X$.
\end{lemma}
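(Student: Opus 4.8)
The plan is to recast the separation requirement as an injectivity statement about the \emph{traces} $C_u:=N(u)\cap C$ of the vertices $u\in V(G)\setminus C$, and then to use the girth hypothesis to show that every potential violation of separation is forced to live inside $X$. First I would note that for two distinct vertices $u,v\in V(G)\setminus C$ and any $c\in C$, the facts $c\neq u$ and $c\neq v$ let the closed-neighbourhood symmetric difference collapse to the open one: $c$ separates $u$ from $v$ if and only if $c\in N(u)\Delta N(v)$. Hence $C$ separates the pair $\{u,v\}$ exactly when $C_u\neq C_v$. Since $C$ is assumed to dominate $G$, each trace $C_u$ is nonempty. Therefore $C$ is a locating-dominating set if and only if the map $u\mapsto C_u$ is injective on $V(G)\setminus C$.

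The crux is the following structural observation, and it is where girth enters. Suppose two distinct vertices $u,v\in V(G)\setminus C$ have the same trace $C_u=C_v=S$. If $|S|\geq 2$, choose distinct $c_1,c_2\in S$; then $u$ and $v$ are two distinct common neighbours of $c_1$ and $c_2$, producing the $4$-cycle $u-c_1-v-c_2-u$ and contradicting girth at least~$5$. Consequently $|S|=1$, so $u$ and $v$ each have exactly one neighbour in $C$; that is, $u,v\in X$. In other words, any two non-separated vertices of $V(G)\setminus C$ must both belong to $X$.

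It remains to connect this to the function $f$. For $x\in X$ the set $N(x)\cap C$ is a singleton, say $\{c(x)\}$, so any $f$ with $f(x)\in N(x)\cap C$ is forced to equal the map $x\mapsto c(x)$; thus the injectivity of $f$ is precisely the statement that distinct vertices of $X$ have distinct traces. For the forward direction, if $C$ is locating-dominating then $u\mapsto C_u$ is injective on all of $V(G)\setminus C$, in particular on $X$, so $f(x)=c(x)$ is injective. For the converse, if such an injective $f$ exists then the traces are pairwise distinct on $X$; but by the crux any colliding pair lies entirely in $X$, so there are no colliding pairs at all, the traces are injective on $V(G)\setminus C$, and $C$ is a locating-dominating set. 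The only substantive step is the girth argument ruling out collisions outside $X$; everything else is bookkeeping once separation has been rephrased via traces.
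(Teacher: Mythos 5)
Your proof is correct and takes essentially the same approach as the paper's: both identify $f$ with the forced trace map $x \mapsto N(x)\cap C$ on $X$, and both use the girth-$5$ hypothesis to rule out equal traces among $2$-dominated vertices via the $4$-cycle $u-c_1-v-c_2-u$. The only difference is organizational — you compress the paper's three-case analysis (both vertices in $X$; one in $X$ and one $2$-dominated; both $2$-dominated) into the single observation that any non-separated pair must lie entirely in $X$.
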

\begin{proof}
If $C$ is a locating-dominating set, $N(x) \cap C \neq N(y) \cap C$ for each pair $x, y$ of vertices of $X$. Then clearly the function $f:X \rightarrow C$ such that $f(x) = N(x) \cap C$ is injective (if there are two vertices $x,y\in X$ with $y\neq x$ and $f(x)=f(y)$, then $x,y$ would not be separated, a contradiction).

For the sufficiency, suppose that there is an injective function $f:X \rightarrow C$ such that $f(x) \in C \cap N(x)$ for all $x \in X$. Let $Y = V(G) \setminus (C \cup X)$ and let $u, v$ be distinct vertices of $V(G) \setminus C$. If $u, v \in X$, evidently $f(u) \neq f(v)$ and thus $N(u) \cap C \neq N(v) \cap C$. If $u \in X$ and $v \in Y$, then $|N(u) \cap C| = 1$ and $|N(v)\cap C| \ge 2$ and thus $N(u) \cap C\neq N(v)\cap C$. Lastly, if $u, v \in Y$, then $|N(u) \cap C| \ge 2$ and $|N(v) \cap C| \ge 2$. But then $N(u) \cap C \neq N(u) \cap C$ since otherwise there would be a cycle of length~$4$. Thus, $C$ is a locating-dominating set.
\end{proof}

Lemma~\ref{lemma:girth5-LD} means that in a graph of girth~5, the fact that a dominating set is also locating only depends on the vertices that are dominated by exactly one vertex.

The following is a more complicated version of Lemma~\ref{lemma:girth5-LD} for identifying codes. It is a more precise extension of a lemma used by the second author and Perarnau in~\cite{FP11}.

\begin{lemma}\label{lemma:girth5-ID}
Let $G$ be an identifiable graph of girth at least~5. Let $C$ be a dominating set of $G$ and let $C_{\geq 3}$ be the set of vertices of $C$ belonging to a connected component of $G[C]$ of size at least $3$. Then, $C$ is an identifying code of $G$ if and only if the following conditions hold:\\ (i) None of the components of $G[C]$ have size $2$;\\
(ii) For $X = \{x \in V(G)\setminus C \; : \; |N(x) \cap C| = 1\}$, there is an injective function $f:X \rightarrow C$ such that $f(x) \in C_{\geq 3} \cap N(x)$ for all $x \in X$.
\end{lemma}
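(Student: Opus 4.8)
The plan is to work with the \emph{code} of each vertex, $I(u)=N[u]\cap C$, and to use the standard reformulation that, since $C$ is already dominating (so every $I(u)$ is nonempty), $C$ is an identifying code if and only if the map $u\mapsto I(u)$ is injective on $V(G)$. I would record the two basic facts that $u\in I(u)$ whenever $u\in C$, while $I(v)=N(v)\cap C$ whenever $v\notin C$. Writing $Y=V(G)\setminus(C\cup X)$, every $v\notin C$ lies in $X$ (with $|I(v)|=1$) or in $Y$ (with $|I(v)|\ge 2$); for $x\in X$ let $g(x)$ denote its unique neighbour in $C$, so that $I(x)=\{g(x)\}$. The whole statement then reduces to deciding, for each type of pair of vertices, when it fails to be separated, and then matching the surviving obstructions against conditions (i) and (ii).

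I would split the pairs $\{u,v\}$ of distinct vertices into three types and dispatch them using the girth hypothesis. For two codewords $u,v\in C$: if they are non-adjacent then $u\in I(u)\setminus I(v)$, so they are separated; if they are adjacent, triangle-freeness forbids a common neighbour, and a short computation shows $I(u)=I(v)$ exactly when $N(u)\cap C=\{v\}$ and $N(v)\cap C=\{u\}$, i.e.\ when $\{u,v\}$ is a component of $G[C]$ of size $2$. Hence \emph{all} codeword pairs are separated precisely when condition (i) holds. For a mixed pair $u\in C$, $v\notin C$: they are separated unless $u\sim v$, and then, applying triangle-freeness to any further $C$-neighbour of $u$, one finds $I(u)=I(v)$ can occur only when $u$ is isolated in $G[C]$ and $N(v)\cap C=\{u\}$, that is, $v\in X$ with $g(v)=u$. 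Finally, for two non-codewords $u,v\notin C$: if at least one lies in $Y$, then either their codes have different sizes or, were $I(u)=I(v)$ with $|I(u)|\ge 2$, the two common codewords would create a $4$-cycle; so such pairs are always separated, and the only genuine constraint is that $g$ be injective on $X$.

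Collecting the obstructions, $C$ is an identifying code if and only if (i) holds, the map $g$ is injective, and no $x\in X$ has $g(x)$ isolated in $G[C]$. To finish I would observe the key bookkeeping point: under (i) every component of $G[C]$ has size $1$ or at least $3$, so ``$g(x)$ is not isolated'' is the same as ``$g(x)\in C_{\ge 3}$''. Since each $x\in X$ has $N(x)\cap C=\{g(x)\}$, an injective function $f\colon X\to C$ with $f(x)\in C_{\ge 3}\cap N(x)$ is forced to equal $g$, and its existence is exactly the conjunction ``$g$ injective and $g(x)\in C_{\ge 3}$ for all $x$'', i.e.\ condition (ii). Thus (i) together with (ii) is equivalent to the separation of all pairs, proving the lemma. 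I expect the main obstacle to be the mixed case: one must notice that the relevant failure is an isolated codeword together with a privately dominated outside vertex, and that it is precisely the strengthening from ``non-isolated'' to ``in $C_{\ge 3}$'' in (ii) --- made legitimate by (i) --- that rules it out. The triangle- and $4$-cycle-free arguments in the other cases are routine but must be invoked with care.
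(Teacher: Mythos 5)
Your proof is correct and takes essentially the same route as the paper: both rest on the same girth-$5$ pair analysis (triangle-freeness handles adjacent pairs sharing a dominator, $4$-cycle-freeness gives $|N(u)\cap N(v)|\le 1$ for non-adjacent pairs), and both isolate the same three obstructions --- a size-$2$ component of $G[C]$, an isolated codeword privately dominating a vertex of $X$, and two vertices of $X$ with the same unique dominator. The only difference is organizational: you establish both directions at once via an exhaustive classification of pair types, and you make explicit the bookkeeping point left implicit in the paper, namely that it is condition~(i) which legitimately upgrades ``$f(x)$ not isolated in $G[C]$'' to ``$f(x)\in C_{\geq 3}$''.
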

\begin{proof}
First, assume that $C$ is an identifying code of $G$. Then,
Property~(i) is clear (otherwise the two vertices of some component
$C_i$ of order~2 would not be separated). The proof that Property~(ii) holds is similar as for Lemma~\ref{lemma:girth5-LD} (by letting $f(x) = N(x) \cap C$ for each $x\in X$). Observe that $f(x)\in C_{\geq 3}$, otherwise $x$ and $f(x)$ would not be separated.

For the other side, assume that $C$ is a dominating set fulfilling Properties~(i) and~(ii) and, by contradiction, assume that there are two distinct vertices $x,y$ that are not separated by $C$, i.e. $N[x] \cap C = N[y] \cap C$.

Assume first that $x$ and $y$ are adjacent. As $N[x] \cap C = N[y] \cap C \neq \emptyset$, it follows that $N[x] \cap C = N[y] \cap C\subseteq \{x,y\}$ (since there is no triangle in $G$). If both $x,y$ belong to $C$, $x$ and $y$ induce a component of $G[C]$ of size $2$, a contradiction. Otherwise, exactly one of them belongs to $C$ (say $x$). But then $y$ is only dominated by $x$, which does not belong to $C_{\geq 3}$, a contradiction to Property~(ii).

Thus, $x$ and $y$ are non-adjacent and, since there are no $4$-cycles, $|N(x) \cap N(y)| \le 1$. Hence, there is a vertex $z$ with $N(x) \cap C = N(y) \cap C = \{z\}$. It follows that $x, y \in X$ but $f(x) = z=f(y)$, a contradiction.
\end{proof}

We now define the key concept of vertex-disjoint path cover of a graph, and introduce some related notation.

\begin{definition}
A \emph{vertex-disjoint path cover} (\emph{vdp-cover} for short) of $G$ is a partition of $V(G)$ into sets of vertices, each of them inducing a graph with a Hamiltonian path.

For $0\leq i\leq 4$, a path whose order is congruent to $i$ modulo~$5$ is called an \emph{$(i\bmod 5)$-path}, and a path of order~$j$ is a $j$-path (an empty path is a $0$-path). Given a vdp-cover $\mathcal S$, we will denote by $\mathcal S_i$ the set of $(i\bmod 5)$-paths in $\mathcal S$, and by $\mathcal T_i$, the set of $i$-paths in $\mathcal S$.
\end{definition}

The following result of Reed~\cite{R96} will be used.

\begin{theorem}[\cite{R96}]\label{thm:reed-vdp}
Every connected cubic graph of order $n$ has a vdp-cover with at most $\frac{n}{9}$ sets.
\end{theorem}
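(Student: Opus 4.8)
The plan is to prove the bound by an extremal argument on a minimum vertex-disjoint path cover, combined with local exchange (augmentation) operations and a discharging step. First I would fix a vdp-cover $\mathcal{S}=\{P_1,\dots,P_p\}$ of $G$ minimizing the number $p$ of paths, and among all such covers one that is in addition \emph{locally optimal}, i.e. stable under the rerouting moves below. Since $G$ is cubic of order $n$, proving $p\le n/9$ is equivalent to showing that the paths of $\mathcal{S}$ have average order at least $9$, and the whole effort goes into controlling the short paths; small orders of $n$ are checked directly.

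The two core tools are classical exchange operations. \emph{Merging}: if an endpoint $u$ of a path $P_i$ is adjacent to an endpoint $v$ of a distinct path $P_j$, then $P_i$ and $P_j$ can be joined through the edge $uv$ into a single set inducing a Hamiltonian path, which strictly decreases $p$; hence in a minimum cover no two endpoints of distinct paths are adjacent. \emph{Rotation}: if an endpoint $u$ of $P_i$ is adjacent to an internal vertex $w$ of $P_i$, one reroutes $P_i$ to a new Hamiltonian path on the same vertex set but with a different endpoint, and, analogously, an endpoint adjacent to an internal vertex of another path lets one shift vertices between the two paths without increasing $p$. I would use these operations to pin down, in the extremal cover, where the three neighbours of each endpoint can lie: being cubic, each endpoint has exactly three neighbours, none of which (by Merging) is an endpoint of a different path, so each is either an internal vertex of another path or a vertex of its own path.

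With these constraints in place I would run a discharging argument. Assign to each path its order as initial charge, so the total charge is $n$; the target $p\le n/9$ amounts to redistributing charge so that every path finishes with charge at least $9$. Paths of order at least $9$ are already fine, so all the difficulty is concentrated on the short paths (orders $1$ to $8$, and most acutely the singletons and order-$2$ paths). For each short path I would locate, through the (forced) neighbourhoods of its endpoints, a region of at least $9$ vertices that can be charged to it, and then argue that no vertex is over-charged; this is exactly where the degree-$3$ hypothesis does the work, by bounding how many short paths can simultaneously claim the same region.

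The main obstacle, and the technical heart of Reed's argument, is precisely this last accounting: a short path may be adjacent to several other short paths, so one must rule out, or carefully pay for, \emph{clusters} of short paths whose endpoint-neighbourhoods overlap. I expect this to require a detailed case analysis of the configurations of order-$1$ and order-$2$ paths and their mutual adjacencies, in each case applying Merging or Rotation (and verifying that these moves neither create a cycle, which would violate the path condition, nor increase $p$) to improve the cover and so contradict extremality. Once one shows the extremal cover contains no such dense cluster, the discharging closes and yields $p\le n/9$. The bookkeeping for the shortest paths should be the longest and most delicate part, whereas the merging and rotation lemmas themselves are routine.
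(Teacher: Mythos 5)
This is a statement the paper does not prove at all: it is quoted from Reed~\cite{R96} and used as a black box, so your proposal must be measured against Reed's original argument. As a proof it has a genuine gap: the quantitative core is announced but never executed. Your exchange lemmas are fine as far as they go (in a cover minimizing $p$, no endpoint is adjacent to an endpoint of another path; rotations and endpoint-to-internal splits preserve $p$), but the decisive step --- ``locate a region of at least $9$ vertices that can be charged'' to each short path and ``argue that no vertex is over-charged'' --- is exactly the part you defer to an unspecified case analysis, and the constraints you have actually derived are too weak to support any such discharging. Concretely, consider a cover consisting of $k$ paths of order~$3$ plus one long path, in which every spare edge of every $3$-path (two at each endpoint, one at the middle vertex) goes to \emph{internal} vertices of the long path. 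This configuration satisfies everything your moves force: no endpoint--endpoint adjacencies between distinct paths, and all your rerouting operations leave $p$ unchanged. Counting spare edges gives a long path of order roughly $5k$, hence $n\approx 8k$ and $p\approx n/8$. So a cover that is extremal in your sense can a priori have ratio $n/8$, and ruling such clusters out is not a matter of local exchanges --- it is the entire theorem. This is consistent with experience: naive extremal-cover arguments saturate well above $1/9$, which is why Reed's proof is not a one-shot discharging over a minimum cover but a long induction (the bulk of~\cite{R96}) on a strengthened statement about connected graphs of maximum degree three, with reductions along cut-edges and small cuts; it is the inductive strengthening, not cover-minimality, that destroys configurations like the one above.

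A second, smaller but real, problem: your claim that ``small orders of $n$ are checked directly'' fails for the statement as written, because the statement without a ceiling is actually false for small $n$. The graph $K_4$ is connected and cubic, needs one path, and $n/9=4/9<1$; likewise a $16$-vertex non-traceable connected cubic graph (a vertex attached to three copies of $K_4$ with one subdivided edge) needs two paths while $16/9<2$. Reed's theorem asserts a cover by at most $\left\lceil n/9\right\rceil$ paths --- the paper's paraphrase silently drops the ceiling --- so any reconstruction must carry the ceiling through the argument (in particular through the discharging, where ``average order at least $9$'' is no longer the right target for small $n$ or for graphs splitting into small pieces under the induction). In short: your plan correctly identifies the standard tools and correctly predicts where the difficulty lies, but it does not contain the idea that overcomes that difficulty, and the step you label routine (small cases) is, as stated, false.
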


\subsection{Locating-dominating sets}\label{sec:LD-upperbounds}

The bound given in the following theorem also follows from a stronger result in a recent paper by Garijo, Gonz\'alez and M\'arquez~\cite{GGM} (see there Proposition 6.6). However, we give an independent proof by a completely different method, which is a good and simple illustration of this technique that will be used several times in this paper.

\begin{theorem}\label{thm:vdp-LD}
Let $G$ be a graph of order $n$, girth at least~5 and minimum degree at least~$2$. Then $\LD(G)\leq\frac{n}{2}$.
\end{theorem}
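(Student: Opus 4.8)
The plan is to use Lemma~\ref{lemma:girth5-LD} to turn the statement into a purely combinatorial condition, and then to realize that condition on a vertex-disjoint path cover by an alternating choice of vertices. First I would unwind the injectivity condition of Lemma~\ref{lemma:girth5-LD}. For a vertex $x\in X$ we have $|N(x)\cap C|=1$, so $C\cap N(x)$ is a single vertex and the value $f(x)$ is \emph{forced} to be that vertex; hence an injective $f$ exists if and only if no vertex of $C$ is the unique $C$-neighbour of two distinct vertices of $V(G)\setminus C$. Consequently it suffices to build a dominating set $C$ with $|C|\le n/2$ such that every $c\in C$ is the \emph{private dominator} (the unique $C$-neighbour) of at most one vertex outside $C$.

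Next I would fix a vdp-cover of $G$ and choose $C$ path by path by an alternating rule: on an even path $x_1-\cdots-x_p$ take the odd-indexed vertices $x_1,x_3,\dots,x_{p-1}$ (that is, $p/2$ vertices), and on an odd path of order at least $5$ take the even-indexed vertices $x_2,x_4,\dots,x_{p-1}$ (that is, $(p-1)/2$ vertices). A direct check shows that in either case every vertex of the path has a $C$-neighbour \emph{inside the path itself}, so $C$ is a dominating set no matter how the paths are joined in $G$. Moreover $|C|=\sum_{P}\lfloor |P|/2\rfloor\le n/2$, with equality only when every path has even order.

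It then remains to verify the private-domination condition, and here the alternating choice pays off. By construction every vertex outside $C$ has at least one $C$-neighbour inside its own path; therefore, if such a vertex lies in $X$, its unique $C$-neighbour must be that within-path neighbour. The only vertices outside $C$ that have exactly one selected neighbour inside their path are the path endpoints, whose forced private dominator is the selected vertex adjacent to them ($x_{p-1}$ on an even path; $x_2$ and $x_{p-1}$, which are distinct since $p\ge 5$, on an odd path). As these dominators lie in the same path as the vertex they dominate, any single $c\in C$ is forced as the private dominator of at most one vertex, so the required injectivity holds. Note that edges of $G$ joining different paths can only add further $C$-neighbours, pushing vertices out of $X$, which never hurts.

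The crux, and the step I expect to be genuinely delicate, is the \emph{existence} of a vdp-cover with no path of order $1$ or $3$, since these are exactly the orders for which the alternating rule breaks down: a path of order $1$ would need its single vertex selected merely to be dominated, and on a path of order $3$ the middle vertex is forced as the private dominator of \emph{both} endpoints. I would isolate this as a structural lemma and prove it by an extremal/exchange argument: among all vdp-covers of $G$, take one minimizing the number of paths and, subject to that, the number of paths of order at most $3$; then use $\delta(G)\ge 2$ to find, for any singleton or order-$3$ path, a neighbour in another path along which the path can be extended or rerouted, contradicting minimality. (The star $K_{1,3}$ shows that $\delta\ge 2$, and not merely $\delta\ge 1$, is essential here.) Once such a cover is available, the three steps above yield $\LD(G)\le\frac{n}{2}$.
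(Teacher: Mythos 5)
Your reduction of Lemma~\ref{lemma:girth5-LD} to a private-dominator condition, the alternating selection on paths of order~$2$ and order~$\ge 4$, and the counting $|C|=\sum_P\lfloor|P|/2\rfloor\le n/2$ are all correct, and this skeleton does parallel the paper's argument. But the proof stands or falls with the structural lemma you defer to, namely that every graph with $\delta\ge 2$ admits a vdp-cover with no path of order~$1$ or~$3$ -- and that lemma, as you state it, is \emph{false}. In $K_{2,5}$ (minimum degree~$2$) every path of order at least~$2$ must pass through one of the two vertices of degree~$5$, so at most two paths of the cover have order~$\ge 2$; a short case check (one path through both hubs covers at most three of the five degree-$2$ vertices, two paths through one hub each have order at most~$3$) shows that \emph{every} vdp-cover of $K_{2,5}$ contains a $1$-path or a $3$-path. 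The triangle $C_3$ is an even smaller counterexample. So the lemma can only hold, if at all, with the girth hypothesis doing real work -- yet your sketched proof (minimize the number of paths, then the number of short paths, then extend or reroute along a neighbour guaranteed by $\delta\ge 2$) uses only adjacency and the degree condition, and would apply verbatim to $K_{2,5}$; it therefore cannot be completed as described. Moreover, even granting girth at least~$5$, the natural exchanges are merely neutral rather than decreasing: if all neighbours of a singleton $\{x\}$ are interior to other paths, splitting such a path at a neighbour and appending $x$ keeps the number of paths unchanged and can spawn a new singleton or $3$-path (for instance, if every neighbour of $x$ is the middle vertex of a $3$-path $u_0-u_1-u_2$, the swap turns $\{x\},\,u_0-u_1-u_2$ into $u_0-u_1-x,\,\{u_2\}$, leaving your potential unchanged), so minimality yields no contradiction.

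The paper sidesteps exactly this difficulty by never eliminating the $1$- and $3$-paths. It chooses a cover minimizing $2|\mathcal T_1|+|\mathcal T_3|$ and shows that in such a cover every vertex of a $1$-path and every end-vertex of a $3$-path has \emph{all} its neighbours among the selected vertices: the list of ``bad'' positions is set up so that each forbidden adjacency permits a surgery whose leftover pieces strictly decrease the potential (in particular, the middle vertex of a $3$-path is a \emph{selected} vertex, so the neutral exchange above never needs to be performed). Then $\delta\ge 2$ makes those problematic vertices $2$-dominated, so they simply leave the set $X$ of Lemma~\ref{lemma:girth5-LD} and require no injective assignment; note that a $1$-path then contributes no vertex at all to the dominating set, preserving $|D|\le n/2$. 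To repair your write-up, you must either prove your structural lemma using girth~$\ge 5$ in an essential way (we do not know whether it is even true), or relax it as the paper does: tolerate the $1$- and $3$-paths and show their endpoints are $2$-dominated.
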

\begin{proof}
Let $\mathcal S$ be a vdp-cover of $G$ and let $\mathcal{T}_1$ and $\mathcal{T}_3$ be the sets of paths of order~$1$ and~$3$ in $\mathcal{S}$, respectively. Let $\mathcal{S}$ be chosen such that $2|\mathcal T_1|+|\mathcal T_3|$ is minimized. Without loss of generality, we can assume that all paths in $\mathcal S$ have length at most~$5$, since otherwise we can split any longer path into paths of lengths~$2$ or~$5$ without affecting the minimality condition. For each path $P \in \mathcal S$ of length $1 \le r \le 5$, we define an order $P = x_0-x_1- \ldots-x_{r-1}$ with $x_i$ adjacent to $x_{i+1}$ for $0\leq i< r-1$. Let $D$ be the set of vertices containing all vertices of the paths of $\mathcal S$ of odd index (i.e. all $x_1$'s and all $x_3$'s). Note that $D$ clearly dominates all vertices, except possibly the vertices in a $1$-path. Also, we define a function $f$ on all vertices with index~$0$ or~$4$ in the following way. If $P= x_0-x_1-\ldots-x_{r-1}$ is an $r$-path with $2 \le r \le 5$, then $f(x_0) = x_1$ and, if $r = 5$, $f(x_4) = x_3$. According to Lemma \ref{lemma:girth5-LD}, if the end-vertices of the $3$-paths in $\mathcal{S}$ have, besides of their neighbour on the path, a second neighbour in $D$ and if all vertices of the $1$-paths from $\mathcal S$ have two neighbours in $D$, then the restriction of $f$ to the set of $1$-dominated vertices is injective and therefore $D$ is a locating-dominating set.

We will show that every vertex of a $1$-path and every end-vertex of a $3$-path has no neighbour outside of $D$; which by the previous discussion suffices for $D$ being a locating-dominating set. Herefor, we say that a vertex $x$ is a $(p,q)$-vertex if it belongs to a path $P$ of order $p+q+1$ of $\mathcal S$ and the two paths obtained from $P$ by removing $x$ have orders $p$ and $q$. Observe that a $(p,q)$-vertex is the same as a $(q,p)$-vertex. Further, we say that, for fixed~$p$ and~$q$, the $(p,q)$-vertices are \emph{good} if they all belong to $D$, otherwise they are \emph{bad}. Taking into account that $p+q+1 \le 5$, we have the following pairs $(p,q)$ such that $(p,q)$-vertices are bad: $(0,0), (0,1), (0,2), (0,3), (0,4), (1,2)$ and $(2,2)$.

Let $P = x \in \mathcal{T}_1$ be a $1$-path. If $x$ is adjacent to a $(0,q)$-vertex for some $q \in \{0,1,2,3,4\}$, then we can replace the $1$-path and the $(q+1)$-path by a $(q+2)$-path, obtaining in all cases a lower value for the sum $2|\mathcal T_1|+|\mathcal T_3|$, a contradiction. Hence suppose that $x$ is adjacent to either a $(1,2)$-vertex or to a $(2,2)$-vertex. Then we can substitute the $1$-path and the $4$- or $5$-path by a $2$-path and a $3$- or $4$-path, obtaining in each case a lower value for the sum $2|\mathcal T_1|+|\mathcal T_3|$, which is a contradiction. Hence, $x$ has to be adjacent only to good vertices. As $\delta(G) \ge 2$, it follows that $x$ is adjacent to two vertices from $D$. Completely analogous we obtain a contradiction when $P$ is a $3$-path having an end-vertex adjacent to a bad vertex. Altogether, it follows that all vertices not in $D$ have either an assignment via $f$ or two neighbours in $D$. Hence, by Lemma \ref{lemma:girth5-LD}, $D$ is a locating-dominating set. Since each path from $S$ has at most half of its vertices in $D$, we obtain $\LD(G)\leq |D| \leq \frac{n}{2}$.
\end{proof}

Theorem~\ref{thm:vdp-LD} is tight for the cycles $C_6$ and $C_8$, which can easily be seen to have location-domination numbers~$3$ and~$4$, respectively (see also~\cite{S88}). In Proposition~\ref{prop:example-LD-n/2}, we will give a construction of arbitrarily large connected graphs based on copies of $C_6$.

Next, given a vdp-cover $\mathcal S$ of a graph $G$ with girth~5, we will show how to construct a set $D(\mathcal{S})$ and an injective function $f:X\rightarrow D(\mathcal{S})$ (where $X$ is the set of $1$-dominated vertices of $V(G)\setminus D(\mathcal{S})$) meeting the conditions of Lemma~\ref{lemma:girth5-LD}. We will build $D(\mathcal{S})$ by taking roughly two vertices out of five in each path of $\mathcal S$, then adding a few vertices for each path whose length is nonzero modulo~$5$.

\begin{definition}\label{def:D(S)} Let $G$ be a graph of girth at least~$5$ and $\mathcal S$ be a vdp-cover of $G$. Then, the set $D(\mathcal S)$ and the function $f_{D(\mathcal S)}$ are constructed as follows.

For each path $P=x_0-\ldots-x_{p-1}$ in $\mathcal S$, we do the following. Assume that $P\in \mathcal S_i$ ($0\leq i\leq 4$), that is, $p=5k+i$ for some $k\geq 0$. If $k\geq 1$, $D(\mathcal S)$ contains the set $\{x_j\in V(P), j=1,3\bmod 5, j< 5k\}$.

Now, if $k\geq 0$ and $P$ belongs to $\mathcal S\setminus \mathcal S_0$, we add some vertices to $D(\mathcal S)$ according to the following case distinction:

\begin{itemize}
\item If $P\in \mathcal S_1$, we let $D(\mathcal S)$ contain $x_{p-1}$.
\item If $P\in \mathcal S_2$, $D(\mathcal S)$ also contains
  $x_{p-2}$ and $f_{D(\mathcal S)}(x_{p-1})=x_{p-2}$.
\item If $P\in \mathcal S_3$, $D(\mathcal S)$ also contains
  $\{x_{p-3},x_{p-2}\}$ and $f_{D(\mathcal S)}(x_{p-1})=x_{p-2}$.
\item If $P\in \mathcal S_4$, $D(\mathcal S)$ also contains
  $\{x_{p-3},x_{p-1}\}$ and $f_{D(\mathcal S)}(x_{p-4})=x_{p-3}$.
\end{itemize}

To finish the construction of the function $f_{D(\mathcal S)}$, for $j< 5k$, if $x_j\notin D(\mathcal S)$ and $j=0\bmod 5$, $f_{D(\mathcal S)}(x_j)=x_{j+1}$; if $j=4\bmod 5$, $f_{D(\mathcal S)}(x_j)=x_{j-1}$.
\end{definition}

An illustration of Definition~\ref{def:D(S)} is given in Figure~\ref{fig:D(S)}.

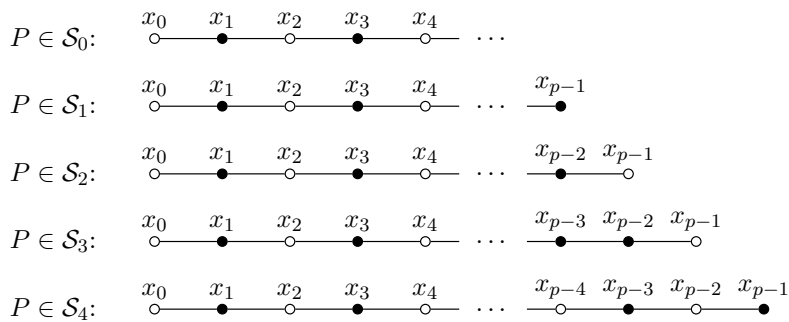
\begin{figure}[!htpb]
\centering
\scalebox{0.9}{\begin{tikzpicture}[join=bevel,inner sep=0.5mm,scale=1.0,line width=0.5pt]

%% S0
\path (0,4) node[draw,shape=circle] (a) {};
\draw (a)+(-1.5,0) node {$P\in\mathcal S_0$:};
\draw (a) node[above=0.1cm] {$x_0$};
\path (1,4) node[draw,shape=circle,fill=black] (b) {};
\draw (b) node[above=0.1cm] {$x_1$};
\path (2,4) node[draw,shape=circle] (c) {};
\draw (c) node[above=0.1cm] {$x_2$};
\path (3,4) node[draw,shape=circle,fill=black] (d) {};
\draw (d) node[above=0.1cm] {$x_3$};
\path (4,4) node[draw,shape=circle] (e) {};
\draw (e) node[above=0.1cm] {$x_4$};
\draw (e)+(1,0) node {\ldots};
\draw (a) -- (b) -- (c) -- (d) -- (e) -- ++(0.5,0);

%% S1
\path (0,3) node[draw,shape=circle] (a) {};
\draw (a)+(-1.5,0) node {$P\in\mathcal S_1$:};
\draw (a) node[above=0.1cm] {$x_0$};
\path (1,3) node[draw,shape=circle,fill=black] (b) {};
\draw (b) node[above=0.1cm] {$x_1$};
\path (2,3) node[draw,shape=circle] (c) {};
\draw (c) node[above=0.1cm] {$x_2$};
\path (3,3) node[draw,shape=circle,fill=black] (d) {};
\draw (d) node[above=0.1cm] {$x_3$};
\path (4,3) node[draw,shape=circle] (e) {};
\draw (e) node[above=0.1cm] {$x_4$};
\draw (e)+(1,0) node {\ldots};
\path (6,3) node[draw,shape=circle,fill=black] (f) {};
\draw (f) node[above=0.1cm] {$x_{p-1}$};
\draw (a) -- (b) -- (c) -- (d) -- (e) -- ++(0.5,0)
 (f) -- ++(-0.5,0);

%% S2
\path (0,2) node[draw,shape=circle] (a) {};
\draw (a)+(-1.5,0) node {$P\in\mathcal S_2$:};
\draw (a) node[above=0.1cm] {$x_0$};
\path (1,2) node[draw,shape=circle,fill=black] (b) {};
\draw (b) node[above=0.1cm] {$x_1$};
\path (2,2) node[draw,shape=circle] (c) {};
\draw (c) node[above=0.1cm] {$x_2$};
\path (3,2) node[draw,shape=circle,fill=black] (d) {};
\draw (d) node[above=0.1cm] {$x_3$};
\path (4,2) node[draw,shape=circle] (e) {};
\draw (e) node[above=0.1cm] {$x_4$};
\draw (e)+(1,0) node {\ldots};
\path (6,2) node[draw,shape=circle,fill=black] (f) {};
\draw (f) node[above=0.1cm] {$x_{p-2}$};
\path (7,2) node[draw,shape=circle] (g) {};
\draw (g) node[above=0.1cm] {$x_{p-1}$};
\draw (a) -- (b) -- (c) -- (d) -- (e) -- ++(0.5,0)
 (g) -- (f) -- ++(-0.5,0);

%% S3
\path (0,1) node[draw,shape=circle] (a) {};
\draw (a)+(-1.5,0) node {$P\in\mathcal S_3$:};
\draw (a) node[above=0.1cm] {$x_0$};
\path (1,1) node[draw,shape=circle,fill=black] (b) {};
\draw (b) node[above=0.1cm] {$x_1$};
\path (2,1) node[draw,shape=circle] (c) {};
\draw (c) node[above=0.1cm] {$x_2$};
\path (3,1) node[draw,shape=circle,fill=black] (d) {};
\draw (d) node[above=0.1cm] {$x_3$};
\path (4,1) node[draw,shape=circle] (e) {};
\draw (e) node[above=0.1cm] {$x_4$};
\draw (e)+(1,0) node {\ldots};
\path (6,1) node[draw,shape=circle,fill=black] (f) {};
\draw (f) node[above=0.1cm] {$x_{p-3}$};
\path (7,1) node[draw,shape=circle,fill=black] (g) {};
\draw (g) node[above=0.1cm] {$x_{p-2}$};
\path (8,1) node[draw,shape=circle] (h) {};
\draw (h) node[above=0.1cm] {$x_{p-1}$};
\draw (a) -- (b) -- (c) -- (d) -- (e) -- ++(0.5,0)
 (h) -- (g) -- (f) -- ++(-0.5,0);

%% S4
\path (0,0) node[draw,shape=circle] (a) {};
\draw (a)+(-1.5,0) node {$P\in\mathcal S_4$:};
\draw (a) node[above=0.1cm] {$x_0$};
\path (1,0) node[draw,shape=circle,fill=black] (b) {};
\draw (b) node[above=0.1cm] {$x_1$};
\path (2,0) node[draw,shape=circle] (c) {};
\draw (c) node[above=0.1cm] {$x_2$};
\path (3,0) node[draw,shape=circle,fill=black] (d) {};
\draw (d) node[above=0.1cm] {$x_3$};
\path (4,0) node[draw,shape=circle] (e) {};
\draw (e) node[above=0.1cm] {$x_4$};
\draw (e)+(1,0) node {\ldots};
\path (6,0) node[draw,shape=circle] (f) {};
\draw (f) node[above=0.1cm] {$x_{p-4}$};
\path (7,0) node[draw,shape=circle,fill=black] (g) {};
\draw (g) node[above=0.1cm] {$x_{p-3}$};
\path (8,0) node[draw,shape=circle] (h) {};
\draw (h) node[above=0.1cm] {$x_{p-2}$};
\path (9,0) node[draw,shape=circle,fill=black] (i) {};
\draw (i) node[above=0.1cm] {$x_{p-1}$};

\draw (a) -- (b) -- (c) -- (d) -- (e) -- ++(0.5,0)
 (i) -- (h) -- (g) -- (f) -- ++(-0.5,0);

\end{tikzpicture}}
\caption{Illustration of set $D(\mathcal S)$.}\label{fig:D(S)}
\end{figure}

\begin{lemma}\label{lemma:D(S)}
Let $G$ be a graph of girth at least~$5$ having a vdp-cover $\mathcal S$. Then $D(\mathcal S)$ is a locating-dominating set of $G$.
\end{lemma}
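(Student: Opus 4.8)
The plan is to verify that the set $D(\mathcal S)$ together with the function $f_{D(\mathcal S)}$ from Definition~\ref{def:D(S)} satisfies the hypotheses of Lemma~\ref{lemma:girth5-LD}, namely that $D(\mathcal S)$ is a dominating set and that $f_{D(\mathcal S)}$ restricts to an injective function from the $1$-dominated vertices $X$ into $D(\mathcal S)$ sending each $x$ to a neighbour in $C \cap N(x)$. Since the construction is carried out path-by-path, I would first reduce everything to a single path $P = x_0 - \ldots - x_{p-1}$ and its ``bulk'' indices $j < 5k$, and then handle the $O(1)$-many residual vertices at the tail of each path according to the five cases $P \in \mathcal S_i$.

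First I would check domination. For the bulk indices $j < 5k$ the selected vertices are exactly those with $j \equiv 1,3 \pmod 5$; reading along the path $x_0,\ldots,x_4$ one sees the pattern (out, in, out, in, out) repeats every five vertices, so within each block of five every vertex is adjacent on the path to a selected vertex or is itself selected — the only potentially undominated positions are $x_0$ of the block and the boundary between consecutive blocks, which I would confirm are dominated by $x_1$ and by the ``$3 \pmod 5$'' vertex of the preceding block respectively. Then I would dispose of the tail in each of the cases $\mathcal S_1,\ldots,\mathcal S_4$ using the explicitly added vertices ($x_{p-1}$; $x_{p-2}$; $\{x_{p-3},x_{p-2}\}$; $\{x_{p-3},x_{p-1}\}$), checking that these cover the last few vertices of the path. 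This is bookkeeping but must be done for each residue class separately.

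Next I would verify the conditions on $f_{D(\mathcal S)}$. The definition assigns $f(x_j) = x_{j+1}$ when $j \equiv 0 \pmod 5$ and $f(x_j) = x_{j-1}$ when $j \equiv 4 \pmod 5$, plus the tail assignments in the case distinction; in every instance the target is a path-neighbour that lies in $D(\mathcal S)$, so the ``$f(x) \in C \cap N(x)$'' requirement is immediate. The substantive point is that $f$ covers all of $X$: I must argue that every $1$-dominated vertex $x \notin D(\mathcal S)$ actually receives a value. A vertex not in $D(\mathcal S)$ has an index $\equiv 0,2,$ or $4 \pmod 5$ in the bulk (or is a tail vertex); by the girth-$5$ characterization I only need to worry about those $x$ with $|N(x) \cap D(\mathcal S)| = 1$, and I would note that vertices of index $\equiv 2 \pmod 5$ already have two path-neighbours in $D$ (namely $x_{j-1}, x_{j+1}$) and hence are \emph{not} in $X$, so $f$ need not be defined on them — consistent with the definition only assigning values for $j \equiv 0, 4$.

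The main obstacle, and the heart of the proof, is \textbf{injectivity} of $f_{D(\mathcal S)}$ restricted to $X$. Within a single path injectivity is clear because distinct source indices map to distinct neighbouring indices (a $0 \pmod 5$ vertex maps forward, a $4 \pmod 5$ vertex maps backward, and these targets are distinct selected vertices). The delicate part is ruling out collisions \emph{across different paths}, or between a bulk assignment and a tail assignment: two vertices $x, y \in X$ with $f(x) = f(y) = z$ would force $z$ to be a common $D$-neighbour of both, and I would invoke the girth-$5$ hypothesis here. Concretely, each target $z = x_i$ is the image of at most one source because the construction pairs each such $z$ with a unique adjacent ``$0$'' or ``$4$''-vertex along its own path; any $x \in X$ with $f(x) = z$ must satisfy $z \in N(x)$, and since $z$ already has the required path-neighbour as its designated preimage, a second preimage $y \ne x$ with $z \in N(y)$ would have to be off-path, but then $x, y, z$ plus $z$'s path-neighbour would create a short cycle or a $4$-cycle, contradicting girth at least $5$. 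I would make this argument precise by checking that each vertex of $D(\mathcal S)$ appears as an $f$-value for at most one index, combining the path-local count with the girth obstruction to cross-path coincidences; once injectivity and the neighbour/domination conditions are in place, Lemma~\ref{lemma:girth5-LD} yields that $D(\mathcal S)$ is a locating-dominating set.
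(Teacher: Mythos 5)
Your overall strategy is exactly the paper's: verify the hypotheses of Lemma~\ref{lemma:girth5-LD} for $D(\mathcal S)$ and $f_{D(\mathcal S)}$, doing the domination bookkeeping path by path and residue by residue. That part of your plan is sound, and your observation that vertices of index $\equiv 2 \pmod 5$ have two path-neighbours in $D(\mathcal S)$ (hence lie outside $X$, so $f$ need not be defined on them) is correct and matches the construction.

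The problem is in the step you call the heart of the proof. Cross-path collisions of $f_{D(\mathcal S)}$ require no girth argument at all: by Definition~\ref{def:D(S)}, the image $f_{D(\mathcal S)}(y)$ of any vertex $y$ is always a neighbour of $y$ on $y$'s \emph{own} path, and the paths of a vdp-cover are vertex-disjoint, so vertices on different paths automatically have distinct images --- this is precisely the paper's one-line argument. Moreover, the girth argument you substitute is invalid as stated: if $y$ is off-path and adjacent to $z$, then $x$, $y$, $z$ and $z$'s designated path-neighbour need not form any cycle; two vertices sharing a single common neighbour create no $4$-cycle, and a girth-$5$ graph can perfectly well contain a code vertex $z$ with several off-path neighbours. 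The true reason an off-path $y$ is never a preimage of $z$ is simply that $f$ was never defined to send it there (either $y\in D(\mathcal S)$, or its image lies on its own path). So injectivity reduces entirely to the within-path check you already sketched: each target of index $\equiv 1 \pmod 5$ receives only the preceding index-$0$ vertex, each target of index $\equiv 3 \pmod 5$ only the following index-$4$ vertex, and the finitely many tail assignments hit vertices (such as $x_{5k}$ or $x_{5k+1}$) that no bulk assignment targets. The girth hypothesis is consumed inside Lemma~\ref{lemma:girth5-LD} itself, where it guarantees that $2$-dominated vertices are separated; it plays no role in the injectivity of $f$. With that correction, your proof coincides with the paper's.
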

\begin{proof}
The proof follows from Lemma~\ref{lemma:girth5-LD}; indeed, each vertex $x$ of a path $P\in \mathcal S$ and $x\notin D(\mathcal S)$ that is not $2$-dominated has an image $f_{D(\mathcal S)}(x)\in P$ (and no other such vertex $y$ has $f_{D(\mathcal S)}(x)=f_{D(\mathcal S)}(y)$). It follows that the restriction of $f_{D(\mathcal S)}$ to the set $X$ of $1$-dominated vertices is injective.
\end{proof}

Now, using Theorem~\ref{thm:reed-vdp} and the above construction of the set $D(\mathcal S)$, we can give an improved bound for cubic graphs, based on the following general theorem:

\begin{theorem}\label{thm:vdp-ld-cubic}
Let $G$ be a graph of order~$n$, girth at least~$5$ and having a vdp-cover with $\alpha\cdot n$ paths. Then $\LD(G)\leq\frac{2+4\alpha}{5}n$.
\end{theorem}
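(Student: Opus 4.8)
The plan is to apply the construction of Definition~\ref{def:D(S)} directly: starting from the given vdp-cover $\mathcal S$ with $\alpha n$ paths, we form the set $D(\mathcal S)$, which by Lemma~\ref{lemma:D(S)} is already known to be a locating-dominating set of $G$. Hence $\LD(G)\le |D(\mathcal S)|$, and it remains only to bound $|D(\mathcal S)|$ from above. Since $D(\mathcal S)$ is built path by path and the paths of $\mathcal S$ partition $V(G)$, I would count, for each path, how many of its vertices land in $D(\mathcal S)$ and then sum the contributions.

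First I would fix a path $P\in\mathcal S_i$ of order $p=5k+i$ with $0\le i\le 4$ and $k\ge 0$, and count $|D(\mathcal S)\cap V(P)|$ straight from the definition. The bulk selection $\{x_j : j\equiv 1,3 \pmod 5,\ j<5k\}$ contributes exactly $2k$ vertices (two per block of five consecutive indices). To this we add the vertices from the case distinction: none when $i=0$, one vertex when $i\in\{1,2\}$, and two vertices when $i\in\{3,4\}$. Thus $|D(\mathcal S)\cap V(P)|$ equals $2k,\ 2k+1,\ 2k+1,\ 2k+2,\ 2k+2$ for $i=0,1,2,3,4$ respectively. This should also be checked in the degenerate case $k=0$, where the bulk is empty and all of $D(\mathcal S)\cap V(P)$ comes from the case distinction; one verifies directly that the same counts hold.

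Next I would compare each of these counts with $\tfrac25 p=2k+\tfrac{2i}{5}$. The difference $|D(\mathcal S)\cap V(P)|-\tfrac25 p$ is then $0,\tfrac35,\tfrac15,\tfrac45,\tfrac25$ for $i=0,1,2,3,4$, so in every case a path of order $p$ contributes at most $\tfrac25 p+\tfrac45$ vertices to $D(\mathcal S)$, the worst case being the $(3\bmod 5)$-paths. Summing this inequality over all paths of $\mathcal S$ and using $\sum_{P\in\mathcal S}|V(P)|=n$ together with $|\mathcal S|=\alpha n$ gives
\[
|D(\mathcal S)|\le \sum_{P\in\mathcal S}\Bigl(\tfrac25|V(P)|+\tfrac45\Bigr)=\tfrac25 n+\tfrac45\,\alpha n=\tfrac{2+4\alpha}{5}\,n,
\]
which is the claimed bound.

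There is no real structural obstacle here: correctness of $D(\mathcal S)$ as a locating-dominating set is already furnished by Lemma~\ref{lemma:D(S)}, so the whole argument reduces to the bookkeeping above. The only point demanding care is confirming that the per-path estimate $\tfrac25 p+\tfrac45$ holds uniformly, in particular isolating $i=3$ as the residue producing the maximal $\tfrac45$ overhead and checking the small-$k$ boundary cases, since it is precisely this additive $\tfrac45$ per path that turns into the $\tfrac45\alpha n$ term in the final estimate.
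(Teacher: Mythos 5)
Your proposal is correct and matches the paper's proof essentially verbatim: both apply Definition~\ref{def:D(S)} to the given cover, invoke Lemma~\ref{lemma:D(S)} for correctness, and then bound $|D(\mathcal S)|$ by counting $2k$ bulk vertices plus the residue-dependent extras, with the $(3 \bmod 5)$-paths giving the worst additive overhead of $\tfrac45$ per path. Your per-path formulation $|D(\mathcal S)\cap V(P)|\le \tfrac25|V(P)|+\tfrac45$ is just a slightly different bookkeeping of the same estimate the paper carries out as a global sum over the classes $\mathcal S_1,\ldots,\mathcal S_4$.
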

\begin{proof}
Let $\mathcal S$ be a vdp-cover of $G$ of size at most $\alpha\cdot n$. We consider the set $D(\mathcal S)$ defined in Definition~\ref{def:D(S)}. By Lemma~\ref{lemma:D(S)}, $D(\mathcal S)$ is a locating-dominating set of $G$. It remains to estimate the size of $D(\mathcal S)$.

For each path $P$ in $\mathcal S_i$ with $5k+i$ vertices ($k\geq 0$) blue, we have added $\frac{2k}{5}$ vertices of $P$ to $D(\mathcal S)$ in the first step of the construction. Then, in the second step, for each path in $\mathcal S_1\cup \mathcal S_2$ and $\mathcal S_3\cup \mathcal S_4$, we have added one and two additional vertices, respectively. So in total we get:

\begin{align*}
|D(\mathcal S)|&\leq\frac{2}{5}(n-|\mathcal S_1|-2|\mathcal S_2|-3|\mathcal S_3|-4|\mathcal S_4|)+|\mathcal S_1|+|\mathcal S_2|+2|\mathcal S_3|+2|\mathcal S_4|\\
 &=\frac{2}{5}n+\frac{3}{5}|\mathcal S_1|+\frac{1}{5}|\mathcal S_2|+\frac{4}{5}|\mathcal S_3|+\frac{2}{5}|\mathcal S_4|\\
&\leq\frac{2}{5}n+\frac{4}{5}|\mathcal S|\\
 &\leq\frac{2+4\alpha}{5}n.
\end{align*}
\end{proof}

We get the following corollary of Theorems~\ref{thm:reed-vdp} and~\ref{thm:vdp-ld-cubic}:

\begin{corollary}\label{cor:LD}
Let $G$ be a connected cubic graph of order~$n$ and girth at least~$5$. Then $\LD(G)\leq\frac{22}{45}n<0.489n$.
\end{corollary}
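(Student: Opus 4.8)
The plan is to combine the two previously established theorems directly, with essentially no additional work. First I would invoke Reed's covering result, Theorem~\ref{thm:reed-vdp}, which guarantees that every connected cubic graph $G$ of order $n$ admits a vdp-cover $\mathcal S$ with at most $\frac{n}{9}$ paths. Writing the actual number of paths of $\mathcal S$ as $\alpha\cdot n$, this simply says that $\alpha\leq\frac{1}{9}$.

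Next I would apply the general estimate of Theorem~\ref{thm:vdp-ld-cubic} to this very cover. Since $G$ has girth at least~$5$, that theorem yields $\LD(G)\leq\frac{2+4\alpha}{5}n$. The one observation worth making explicit is that the right-hand side is an increasing function of $\alpha$, so the bound is largest (hence the inequality weakest) when $\alpha$ is as large as possible. Substituting the worst-case value $\alpha=\frac{1}{9}$ therefore preserves the inequality and gives
\[
\LD(G)\leq\frac{2+\tfrac{4}{9}}{5}\,n=\frac{22/9}{5}\,n=\frac{22}{45}\,n,
\]
and a routine numerical check confirms $\frac{22}{45}=0.4888\ldots<0.489$.

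There is no real obstacle here: the corollary is an immediate consequence of feeding Reed's bound on the number of paths into the general estimate of Theorem~\ref{thm:vdp-ld-cubic}. The only point requiring even minimal care is the monotonicity step—because Reed's theorem only supplies an upper bound $\alpha\leq\frac{1}{9}$ rather than an exact count, one must argue that enlarging $\alpha$ to $\frac{1}{9}$ can only increase $\frac{2+4\alpha}{5}n$, so replacing the true $\alpha$ by $\frac{1}{9}$ is legitimate and preserves the direction of the inequality.
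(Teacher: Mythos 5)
Your proposal is correct and follows exactly the paper's route: the corollary is stated there as an immediate consequence of Theorem~\ref{thm:reed-vdp} (a vdp-cover with at most $\frac{n}{9}$ paths) plugged into Theorem~\ref{thm:vdp-ld-cubic}, giving $\LD(G)\leq\frac{2+4\cdot\frac{1}{9}}{5}n=\frac{22}{45}n<0.489n$. The monotonicity remark you add (that $\frac{2+4\alpha}{5}$ increases in $\alpha$, so the bound $\alpha\leq\frac{1}{9}$ may be substituted) is left implicit in the paper but is the same argument.
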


\subsection{Identifying codes}\label{sec:ID-upperbounds}

The methods used in this subsection are similar to the ones of Subsection~\ref{sec:LD-upperbounds}, but the proofs are more intricate.

\begin{theorem}\label{thm:vdp-ID-delta-2}
Let $G$ be an identifiable graph of order~$n$, girth at least~$5$ and minimum degree~$\delta\geq 2$. Then, $\M(G)\leq\frac{5}{7}n<0.715n$.
\end{theorem}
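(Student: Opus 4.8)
My plan is to mimic the proof of Theorem~\ref{thm:vdp-LD}, replacing the characterization of Lemma~\ref{lemma:girth5-LD} by that of Lemma~\ref{lemma:girth5-ID}. Thus I will start from a vertex-disjoint path cover $\mathcal S$ of $G$, chosen to minimize a suitable potential function that penalises the ``expensive'' path orders (an identifying-code analogue of the quantity $2|\mathcal T_1|+|\mathcal T_3|$ used before), and I may assume, after splitting any long path into shorter pieces, that every path of $\mathcal S$ has bounded order. On the paths I then select a set $C$ of vertices and define a partial injection $f$ on the set $X$ of $1$-dominated vertices outside $C$, and verify conditions~(i) and~(ii) of Lemma~\ref{lemma:girth5-ID}.

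The selection rule is the delicate point. In contrast with the locating-dominating case, an isolated code vertex is now harmful only when it is the \emph{unique} code-neighbour of some vertex (condition~(ii) would then have no admissible image in $C_{\ge 3}$) or when two such vertices form a component of order exactly~$2$ (forbidden by condition~(i)). Along a long interior stretch of a path I can therefore afford an \emph{alternating} pattern: every non-selected vertex then lies between two selected vertices, hence is $2$-dominated and contributes nothing to $X$, while each selected vertex sits in a component of order~$1$ that dominates nobody exclusively; this gives a local density close to $\tfrac12$. Near the two ends of each path, and on the short paths, I instead ``thicken'' the code into runs of order at least~$3$, so that every end-vertex that is only $1$-dominated is mapped by $f$ to a neighbour lying in $C_{\ge 3}$, and so that no component of order~$2$ is created.

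With this pattern in hand, the heart of the proof is a sequence of exchange arguments, richer than those in Theorem~\ref{thm:vdp-LD}, showing that the minimality of $\mathcal S$ forbids every path order whose forced cost would exceed $\tfrac57$ of its order. Using $\delta(G)\ge 2$ (so that the end-vertex of each path has a neighbour outside it) together with the girth condition (so that two vertices share at most one common neighbour and no short cycle closes up), I will rule out the cheap-looking but invalid configurations and reduce every remaining path to a cost of at most $\tfrac57$ of its order; the extremal case is the path of order~$7$, mirroring the extremal cycle $C_7$ with $\M(C_7)=5$. Summing the per-path estimates then yields $|C|\le\tfrac57 n$, and Lemma~\ref{lemma:girth5-ID} guarantees that $C$ is an identifying code, so $\M(G)\le\tfrac57 n<0.715\,n$.

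I expect the main obstacle to be that conditions~(i) and~(ii) are \emph{not} local to a single path: an edge joining two paths of $\mathcal S$ can turn a harmless alternating pattern into an illegal one. The sharpest instance is the closing edge of $C_7$, which makes the two end-vertices of the Hamiltonian $7$-path adjacent and hence creates a forbidden component of order~$2$ out of two isolated code vertices, forcing a fifth vertex into the code. Controlling these cross-path interactions — deciding, path by path, when an isolated code vertex must be thickened into a component of order at least~$3$, and when two isolated code vertices in different paths would illegally pair up — while keeping the global count at $\tfrac57 n$ and the map $f$ injective, is the part that will require the most careful case analysis.
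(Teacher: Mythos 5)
Your scaffolding coincides with the paper's: a vdp-cover minimizing a potential over path orders, splitting long paths into bounded pieces, a per-path selection with an injection into $C_{\geq 3}$, verification via Lemma~\ref{lemma:girth5-ID}, and the $7$-path as extremal case. However, the one concrete new ingredient of your plan --- alternating, density-$\tfrac12$ patterns on interior stretches, leaving isolated code vertices whose gaps are $2$-dominated --- is a genuine gap, and not merely a matter of ``careful case analysis''. An isolated code vertex is tolerable only if nothing is $1$-dominated by it and no second isolated code vertex becomes adjacent to it. But a chord joining two vertices at path-distance~$4$ (say $x_5$ and $x_9$) closes a $5$-cycle, hence is perfectly compatible with girth~$5$, and under your alternation it creates a forbidden component of order~$2$ in $G[C]$; the same happens for an edge between interior vertices of two different paths, and a vertex of another path (e.g.\ the vertex of a $1$-path) may be $1$-dominated by one of your isolated interior vertices, leaving condition~(ii) with no admissible image. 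Crucially, none of these configurations can be excluded by the minimality of the cover: the exchange arguments in the style of Theorem~\ref{thm:vdp-LD} only control edges incident to \emph{end-vertices of penalized short paths}, whereas these bad edges join interior vertices of paths (e.g.\ two $5$-paths) that carry no penalty in any reasonable potential --- rerouting through such an edge typically creates short paths and \emph{increases} the potential. Thickening ``when needed'' is not a secured repair either, since each fix costs extra vertices and you give no accounting showing the total stays below $\tfrac57 n$.

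Moreover, the density-$\tfrac12$ hope is provably unattainable, by the paper's own lower-bound construction: the graphs of Proposition~\ref{prop:example-ID-3n/5} (stars of $C_5$'s) have $\M(G)=\tfrac35(n-1)$, girth~$5$ and minimum degree~$2$, yet admit a vdp-cover consisting of one $11$-path and otherwise only $5$-paths; hence any per-path selection scheme must place at least three code vertices on almost every $5$-path, i.e.\ interior density at least $\tfrac35$. This is exactly what the paper does instead: for $5\le r\le 7$ it selects the whole interior $x_1,\ldots,x_{r-2}$ (whence the ratio $\tfrac57$ at $r=7$), for $r=8,9$ two contiguous blocks, and in the generic Definition~\ref{def:C(S)} three consecutive vertices out of every five --- so that every code vertex lies, \emph{within its own path}, in a component of order at least~$3$ wherever possible, which makes the verification of Lemma~\ref{lemma:girth5-ID} robust against arbitrary chords and cross-path edges. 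The residual dangers (paths of orders $1,2,3,4$, and the end-blocks of order~$2$ in $8$- and $9$-paths) are then eliminated by the specific potential $4|\mathcal T_1\cup\mathcal T_4|+3|\mathcal T_2\cup\mathcal T_3|+2|\mathcal T_8\cup\mathcal T_9|$ together with exchange claims about end-vertices, plus a separate argument handling the cycles $C_8$ and $C_9$. To repair your proposal you would have to abandon alternation in favour of such contiguous blocks, at which point it essentially becomes the paper's proof.
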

\begin{proof}
Given a vdp-cover $\mathcal S$ of $G$, let $\mathcal{T}_i$ be the set of paths of order exactly~$i$ of $\mathcal{S}$ (in this proof we do not consider the orders modulo~$5$). We choose $\mathcal{S}$ such that 
\begin{equation}\label{eq:ID-min}
4 |\mathcal{T}_1 \cup \mathcal{T}_4| + 3 |\mathcal{T}_2 \cup \mathcal{T}_3| + 2 |\mathcal{T}_8 \cup \mathcal{T}_9| 
\end{equation}
is minimized.
Let $P \in \mathcal{S}$ be an $r$-path with $r \ge 10$. Then we can replace $P$ by paths of orders $5$, $6$ and $7$ without affecting the minimality of \eqref{eq:ID-min}:
\begin{itemize}
\item If $r \equiv 0  \mod 5$, then we can replace $P$ by $5$-paths.
\item If $r \equiv 1  \mod 5$, then we can replace $P$ by one $6$-path and the remaining part by $5$-paths.
\item If $r \equiv 2  \mod 5$, then we can replace $P$ by one $7$-path and the remaining part by $5$-paths.
\item If $r \equiv 3  \mod 5$, then we can replace $P$ by one $6$-path, one $7$-path, and the remaining part by $5$-paths.
\item If $r \equiv 4  \mod 5$, then we can replace $P$ by two $7$-paths and the remaining part by $5$-paths.
\end{itemize}
Hence, without loss of generality, we can assume that there are no paths of length~$10$ or more in $\mathcal{S}$. 
Now, we define a set $C$ in the following way. For each $r$-path $P = x_0-x_1-\ldots-x_{r-1}$ of $\mathcal{S}$, we add some vertices to $C$ and define a function $f$ according to the following distinction:
\begin{itemize}
\item If $r = 2$, then let $C$ contain $x_1$.
\item If $r = 3$, then let $C$ contain $x_1$ and $x_2$.
\item If $r = 4$, then let $C$ contain $x_0$ and $x_3$; let $f(x_1) = x_0$ and $f(x_2) = x_3$.
\item If $5 \le r \le 7$, then let $C$ contain $x_1, x_2, \ldots x_{r-2}$; let $f(x_0) = x_1$ and $f(x_{r-1}) = x_{r-2}$.
\item If $r = 8$, then let $C$ contain $x_1$, $x_2$, $x_3$, $x_6$ and $x_7$; let $f(x_4) = x_3$ and $f(x_5) = x_6$.
\item If $r = 9$, then let $C$ contain $x_1$, $x_2$, $x_3$, $x_4$, $x_7$ and $x_8$; let $f(x_5) = x_4$ and $f(x_6) = x_7$.
\end{itemize}
An illustration of set $C$ is given in Figure~\ref{fig:C}. We will show that $C$ is an identifying code of $G$.

\begin{figure}[!htpb]
\centering
\scalebox{0.9}{\begin{tikzpicture}[join=bevel,inner sep=0.5mm,scale=1.0,line width=0.5pt]

%%p=1
\path (0,8) node[draw,shape=circle] (a) {};
\draw (a)+(-1.5,0) node {$r = 1$:};

%% p=2
\path (0,7) node[draw,shape=circle] (a) {};
\draw (a)+(-1.5,0) node {$r = 2$:};
\draw (a) node[above=0.1cm] {$x_0$};
\path (1,7) node[draw,shape=circle,fill=black] (b) {};
\draw (b) node[above=0.1cm] {$x_1$};
\draw (a) -- (b);

%% p=3
\path (0,6) node[draw,shape=circle] (a) {};
\draw (a)+(-1.5,0) node {$r=3$:};
\draw (a) node[above=0.1cm] {$x_0$};
\path (1,6) node[draw,shape=circle,fill=black] (b) {};
\draw (b) node[above=0.1cm] {$x_1$};
\path (2,6) node[draw,shape=circle,fill=black] (c) {};
\draw (c) node[above=0.1cm] {$x_2$};
\draw (a) -- (b) -- (c);

%% p=4
\path (0,5) node[draw,shape=circle,fill=black] (a) {};
\draw (a)+(-1.5,0) node {$r=4$:};
\draw (a) node[above=0.1cm] {$x_0$};
\path (1,5) node[draw,shape=circle] (b) {};
\draw (b) node[above=0.1cm] {$x_1$};
\path (2,5) node[draw,shape=circle] (c) {};
\draw (c) node[above=0.1cm] {$x_2$};
\path (3,5) node[draw,shape=circle,fill=black] (d) {};
\draw (d) node[above=0.1cm] {$x_3$};
\draw (a) -- (b) -- (c) -- (d);

%% p=5
\path (0,4) node[draw,shape=circle] (a) {};
\draw (a)+(-1.5,0) node {$r=5$:};
\draw (a) node[above=0.1cm] {$x_0$};
\path (1,4) node[draw,shape=circle,fill=black] (b) {};
\draw (b) node[above=0.1cm] {$x_1$};
\path (2,4) node[draw,shape=circle,fill=black] (c) {};
\draw (c) node[above=0.1cm] {$x_2$};
\path (3,4) node[draw,shape=circle,fill=black] (d) {};
\draw (d) node[above=0.1cm] {$x_3$};
\path (4,4) node[draw,shape=circle] (e) {};
\draw (e) node[above=0.1cm] {$x_4$};
\draw (a) -- (b) -- (c) -- (d) -- (e);

%% p=6
\path (0,3) node[draw,shape=circle] (a) {};
\draw (a)+(-1.5,0) node {$r=6$:};
\draw (a) node[above=0.1cm] {$x_0$};
\path (1,3) node[draw,shape=circle,fill=black] (b) {};
\draw (b) node[above=0.1cm] {$x_1$};
\path (2,3) node[draw,shape=circle,fill=black] (c) {};
\draw (c) node[above=0.1cm] {$x_2$};
\path (3,3) node[draw,shape=circle,fill=black] (d) {};
\draw (d) node[above=0.1cm] {$x_3$};
\path (4,3) node[draw,shape=circle,fill=black] (e) {};
\draw (e) node[above=0.1cm] {$x_4$};
\path (5,3) node[draw,shape=circle] (f) {};
\draw (f) node[above=0.1cm] {$x_5$};
\draw (a) -- (b) -- (c) -- (d) -- (e) -- (f);

%% p=7
\path (0,2) node[draw,shape=circle] (a) {};
\draw (a)+(-1.5,0) node {$r=7$:};
\draw (a) node[above=0.1cm] {$x_0$};
\path (1,2) node[draw,shape=circle,fill=black] (b) {};
\draw (b) node[above=0.1cm] {$x_1$};
\path (2,2) node[draw,shape=circle,fill=black] (c) {};
\draw (c) node[above=0.1cm] {$x_2$};
\path (3,2) node[draw,shape=circle,fill=black] (d) {};
\draw (d) node[above=0.1cm] {$x_3$};
\path (4,2) node[draw,shape=circle,fill=black] (e) {};
\draw (e) node[above=0.1cm] {$x_4$};
\path (5,2) node[draw,shape=circle,fill=black] (f) {};
\draw (f) node[above=0.1cm] {$x_5$};
\path (6,2) node[draw,shape=circle] (g) {};
\draw (g) node[above=0.1cm] {$x_6$};
\draw (a) -- (b) -- (c) -- (d) -- (e) -- (f) -- (g);

%% p=8
\path (0,1) node[draw,shape=circle] (a) {};
\draw (a)+(-1.5,0) node {$r=8$:};
\draw (a) node[above=0.1cm] {$x_0$};
\path (1,1) node[draw,shape=circle,fill=black] (b) {};
\draw (b) node[above=0.1cm] {$x_1$};
\path (2,1) node[draw,shape=circle,fill=black] (c) {};
\draw (c) node[above=0.1cm] {$x_2$};
\path (3,1) node[draw,shape=circle,fill=black] (d) {};
\draw (d) node[above=0.1cm] {$x_3$};
\path (4,1) node[draw,shape=circle] (e) {};
\draw (e) node[above=0.1cm] {$x_4$};
\path (5,1) node[draw,shape=circle] (f) {};
\draw (f) node[above=0.1cm] {$x_5$};
\path (6,1) node[draw,shape=circle,fill=black] (g) {};
\draw (g) node[above=0.1cm] {$x_6$};
\path (7,1) node[draw,shape=circle,fill=black] (h) {};
\draw (h) node[above=0.1cm] {$x_7$};
\draw (a) -- (b) -- (c) -- (d) -- (e) -- (f) -- (g) -- (h);

%% p=9
\path (0,0) node[draw,shape=circle] (a) {};
\draw (a)+(-1.5,0) node {$r=9$:};
\draw (a) node[above=0.1cm] {$x_0$};
\path (1,0) node[draw,shape=circle,fill=black] (b) {};
\draw (b) node[above=0.1cm] {$x_1$};
\path (2,0) node[draw,shape=circle,fill=black] (c) {};
\draw (c) node[above=0.1cm] {$x_2$};
\path (3,0) node[draw,shape=circle,fill=black] (d) {};
\draw (d) node[above=0.1cm] {$x_3$};
\path (4,0) node[draw,shape=circle,fill=black] (e) {};
\draw (e) node[above=0.1cm] {$x_4$};
\path (5,0) node[draw,shape=circle] (f) {};
\draw (f) node[above=0.1cm] {$x_5$};
\path (6,0) node[draw,shape=circle] (g) {};
\draw (g) node[above=0.1cm] {$x_6$};
\path (7,0) node[draw,shape=circle,fill=black] (h) {};
\draw (h) node[above=0.1cm] {$x_7$};
\path (8,0) node[draw,shape=circle,fill=black] (i) {};
\draw (i) node[above=0.1cm] {$x_8$};
\draw (a) -- (b) -- (c) -- (d) -- (e) -- (f) -- (g) -- (h) -- (i);
\end{tikzpicture}}
\caption{Illustration of set $C$ in the proof of Theorem~\ref{thm:vdp-ID-delta-2}.}\label{fig:C}
\end{figure}
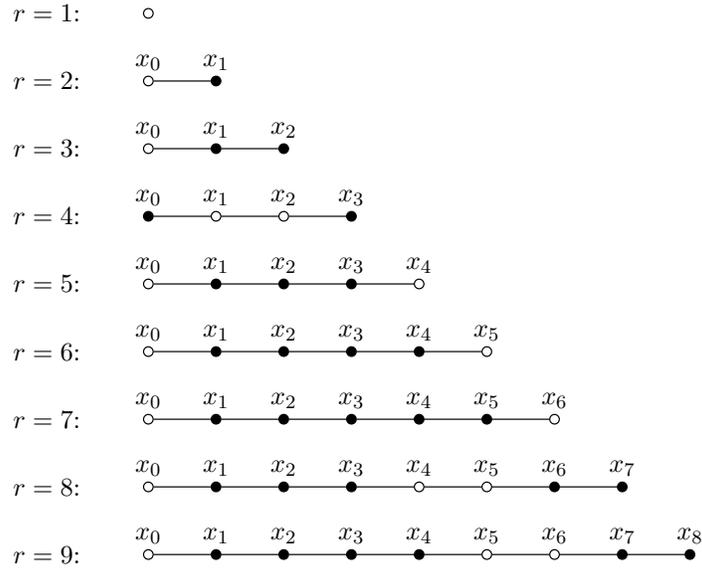

As in the proof of Theorem~\ref{thm:vdp-LD}, we say that a vertex $x$ is a $(p,q)$-vertex if it belongs to a path $P$ of order $p+q+1$ of $\mathcal S$ and the two paths obtained from $P$ by removing $x$ have orders $p$ and $q$. Observe that a $(p,q)$-vertex is the same as a $(q,p)$-vertex. Further, we say that, for fixed $p$ and $q$, the $(p,q)$-vertices are \emph{good} if they all belong to $C$, otherwise they are \emph{bad}. Taking into account that $p+q+1 \le 9$, we have the following set $B$ of pairs $(p,q)$ such that $(p,q)$-vertices are bad:
$$ B = \{ (0,0), (0,1), (0,2), (0,4), (0,5), (0,6), (0,7), (0,8), (1,2), (3,4), (2,5), (2,6), (3,5) \}.$$

Now we will prove the following claims.

\begin{claim}\label{claim:P_8&P_9}
For a path $P \in \mathcal{S}$ of order $r\in\{8,9\}$, we can assume that the end-vertex $x_{r-1}$, which belongs to $C$, has either a second neighbour in $P$ contained in $C$ (i.e. different from its predecessor $x_{r-2}$ in $P$) or it has a neighbour outside $P$.
\end{claim}
Let $r=8$ and $P = x_0-x_1-\ldots-x_7$ and, following the construction of $C$, we have $x_1,x_2,x_3,x_6, x_7 \in C$. By contradiction, suppose that $x_7$ is not adjacent to any of $x_1,x_2,x_3$. Suppose also that $x_7$ has no neighbour outside $P$. Since $G$ has girth at least~$5$, $x_7$ is neither adjacent to $x_4$ nor to $x_5$. Hence, as $\delta \ge 2$, $x_7$ has to be adjacent to $x_0$. Now, either $G = C_8$ or one of the vertices from $P$ has one neighbour outside $P$. In the first case, an independent set of size~$4$ is an identifying code of $G=C_8$ and satisfies the desired bound. Hence we may assume that $G \neq C_8$ and thus there is a vertex from $P$ having a neighbour outside $P$. In this case, we may reorder the vertices along the cycle such that $x_7$ has one neighbour outside $P$. Hence, Claim~\ref{claim:P_8&P_9} follows for $r=8$. The same argument can be used to prove the case $r=9$.

\begin{claim}\label{claim:r-paths}
Let  $r \in \{1, 2, 3, 4, 8, 9\}$ and let $x$ be an end-vertex of an $r$-path $P \in \mathcal{S}$. Then all neighbours of $x$ outside $P$ are good vertices.
\end{claim}

Suppose that, for some $r \in \{1, 2, 3, 4, 8, 9\}$, there is an end-vertex of an $r$-path $P$ which is adjacent to a $(p,q)$-vertex in $P'\in \mathcal S$ with $P\neq P'$ and $(p,q)\in B$. Note that we can replace $P$ and $P'$ by either an $(r+p+1)$-path and a $q$-path or by a $p$-path and an $(r+q+1)$-path.\footnote{Whenever we consider a new $s$-path with $s\geq 10$, we implicitely assume that, as done in the beginning of the proof, it is cut into smaller paths.} We will see that, in each case, we obtain a vdp-cover which contradicts the minimality of~\eqref{eq:ID-min}. If $p = 0$, then we can join the $r$-path together with the $(q+1)$-path obtaining an $(r+q+1)$-path. This gives in all cases a lower value for the sum~\eqref{eq:ID-min}, which is a contradiction. Hence we can suppose that $(p, q) \in \{(1,2), (3,4), (2,5), (2,6), (3,5) \}$.  When $r = 4$ and $(p,q)$ is arbitrary or when $r= 2$ and $(p,q) = (1,2)$, we can replace the $r$- and the $(p+q+1)$-path by an $(r+q+1)$- and a $p$-path and we obtain in all cases a lower value for~\eqref{eq:ID-min}. For $r \in \{1,3,8,9\}$ and $(p,q)$ is arbitrary or $r=2$ and $(p,q) \neq (1,2)$, we can replace the $r$- and the $(p+q+1)$-paths by an $(r+p+1)$- and a $q$-path and we obtain always a lower value for~\eqref{eq:ID-min}. Since we obtain in all cases a contradiction to the minimality of~\eqref{eq:ID-min}, it follows that, for $r =1, 2, 3, 4, 8, 9$, every end-vertex of an $r$-path is adjacent to a good vertex, proving Claim~\ref{claim:r-paths}.

\begin{claim}\label{claim:1-paths}
Every vertex from a $1$-path is adjacent to two vertices of $C$.
\end{claim}
As $\delta \ge 2$ and since by Claim~\ref{claim:r-paths}, the vertex of a $1$-path cannot be adjacent to a bad vertex, then it has to be adjacent to at least two good vertices, proving Claim~\ref{claim:1-paths}.

\begin{claim}\label{claim:components}
There are no components of $G[C]$ that have size at most~$2$.
\end{claim}

Since the girth of $G$ is at least~$5$ and $\delta \ge 2$, all end-vertices of a $2$-, $3$- or $4$-path $P$ have a neighbour outside $P$. By Claim~\ref{claim:r-paths}, these neighbours have to be good vertices. Hence, there are no $1$-components in $G[C]$. On the other side, if $P$ is an $8$- or a $9$-path, Claim~\ref{claim:P_8&P_9} implies that the end-vertices of $P$ have either a further neighbour in $P$ belonging to $C$ or they have a neighbour outside $P$, which, by Claim~\ref{claim:r-paths}, is a good vertex. Thus, the only possibilities to have $2$-components in $G[C]$ are given when two $2$-paths or one $2$-path and one $4$-path or two $4$-paths are connected through their good end-vertices. In these cases we could transform them into a $4$-path, a $6$-path or an $8$-path which would contribute less to the sum~\eqref{eq:ID-min} than the original paths, which is a contradiction, proving Claim~\ref{claim:components}.

Hence, by Claims~\ref{claim:P_8&P_9}, \ref{claim:r-paths}, \ref{claim:1-paths} and \ref{claim:components} and by the construction of the function $f$, $C$ fulfils the conditions of Lemma~\ref{lemma:girth5-ID}, which certifies that it is an identifying code. Since at most $\frac{5}{7}|P|$ vertices from every path $P \in \mathcal{S}$ belong to $C$, $C$ is an identifying code of $G$ of cardinality at most $\frac{5}{7}n$.
\end{proof}

Theorem~\ref{thm:vdp-ID-delta-2} is tight for the cycle $C_7$, which can easily be seen to have identifying code number~$5$ (see also~\cite{BCHL04}).

As for locating-dominating sets, given a vdp-cover $\mathcal S$ of a graph $G$ with girth~$5$, we define a set $C(\mathcal S)$ and a function $f_{C(\mathcal S)}$ as follows.

\begin{definition}\label{def:C(S)}
Let $G$ be a graph of girth at least~5 and $\mathcal S$ be a vdp-cover of $G$. Then, the set $C(\mathcal S)$ and the function $f=f_{C(\mathcal S)}$ are constructed as follows.

For each path $P=x_0-\ldots-x_{p-1}$ of $\mathcal S$, we do the following. Assume that $P\in \mathcal S_i$ ($0\leq i\leq 4$), that is, $p=5k+i$ for some $k\geq 0$. If $k\geq 1$, $C(\mathcal S)$ contains the set $\{x_j\in V(P), j=1,2,3\bmod 5, j< 5k\}$.

Now, for $k\geq 0$, if $P$ belongs to $\mathcal S\setminus \mathcal S_0$, we add some vertices to $C(\mathcal S)$ according to the following case distinction:

\begin{itemize}
\item If $P\in \mathcal S_1$ and $k\geq 1$, we let
  $C(\mathcal S)$ contain $x_{p-2}$ and $f(x_{p-1})=x_{p-2}$. If $k=0$, $C(\mathcal S)$ contains $x_0$.
\item If $P\in \mathcal S_2$ and $k\geq 1$, $C(\mathcal S)$ also contains
  $\{x_{p-3},x_{p-2}\}$ and $f(x_{p-1})=x_{p-2}$. If $k=0$, $C(\mathcal S)$ contains
  $\{x_0,x_1\}$.
\item If $P\in \mathcal S_3$ and $k\geq 1$, $C(\mathcal S)$ also contains
  $\{x_{p-3},x_{p-2},x_{p-1}\}$. If $k=0$, $C(\mathcal S)$ contains
  $\{x_0,x_1,x_2\}$.
\item If $P\in \mathcal S_4$ and $k\geq 1$, $C(\mathcal S)$ also contains
  $\{x_{p-4},x_{p-3},x_{p-2}\}$ and $f(x_{p-1})=x_{p-2}$. If $k=0$, $C(\mathcal S)$ contains
  $\{x_0,x_1,x_2\}$ and $f(x_{3})=x_{2}$.
\end{itemize}

To finish the construction of the function $f$, for $j< 5k$, if $x_j\notin C(\mathcal S)$ and $j=0\bmod 5$, $f(x_j)=x_{j+1}$; if $j=4\bmod 5$, $f(x_j)=x_{j-1}$. Note that each vertex $x\in P$ of $V(G)\setminus C(\mathcal S)$ has an image $f(x)$ belonging to $P$.
\end{definition}

An illustration of Definition~\ref{def:C(S)} is given in Figure~\ref{fig:C(S)}.

\begin{figure}[!htpb]
\centering
\scalebox{0.9}{\begin{tikzpicture}[join=bevel,inner sep=0.5mm,scale=1.0,line width=0.5pt]

%% S0
\path (0,4) node[draw,shape=circle] (a) {};
\draw (a)+(-1.5,0) node {$P\in\mathcal S_0$:};
\draw (a) node[above=0.1cm] {$x_0$};
\path (1,4) node[draw,shape=circle,fill=black] (b) {};
\draw (b) node[above=0.1cm] {$x_1$};
\path (2,4) node[draw,shape=circle,fill=black] (c) {};
\draw (c) node[above=0.1cm] {$x_2$};
\path (3,4) node[draw,shape=circle,fill=black] (d) {};
\draw (d) node[above=0.1cm] {$x_3$};
\path (4,4) node[draw,shape=circle] (e) {};
\draw (e) node[above=0.1cm] {$x_4$};
\draw (e)+(1,0) node {\ldots};
\path (6,4) node[draw,shape=circle] (a2) {};
\draw (a2) node[above=0.1cm] {$x_{p-5}$};
\path (7,4) node[draw,shape=circle,fill=black] (b2) {};
\draw (b2) node[above=0.1cm] {$x_{p-4}$};
\path (8,4) node[draw,shape=circle,fill=black] (c2) {};
\draw (c2) node[above=0.1cm] {$x_{p-3}$};
\path (9,4) node[draw,shape=circle,fill=black] (d2) {};
\draw (d2) node[above=0.1cm] {$x_{p-2}$};
\path (10,4) node[draw,shape=circle] (e2) {};
\draw (e2) node[above=0.1cm] {$x_{p-1}$};

\draw (a) -- (b) -- (c) -- (d) -- (e) -- ++(0.5,0)
      (e2) -- (d2) -- (c2) -- (b2) -- (a2) -- ++(-0.5,0);
%% S1
\path (0,3) node[draw,shape=circle] (a) {};
\draw (a)+(-1.5,0) node {$P\in\mathcal S_1$:};
\draw (a) node[above=0.1cm] {$x_0$};
\path (1,3) node[draw,shape=circle,fill=black] (b) {};
\draw (b) node[above=0.1cm] {$x_1$};
\path (2,3) node[draw,shape=circle,fill=black] (c) {};
\draw (c) node[above=0.1cm] {$x_2$};
\path (3,3) node[draw,shape=circle,fill=black] (d) {};
\draw (d) node[above=0.1cm] {$x_3$};
\path (4,3) node[draw,shape=circle] (e) {};
\draw (e) node[above=0.1cm] {$x_4$};
\draw (e)+(1,0) node {\ldots};
\path (6,3) node[draw,shape=circle] (a2) {};
\draw (a2) node[above=0.1cm] {$x_{p-6}$};
\path (7,3) node[draw,shape=circle,fill=black] (b2) {};
\draw (b2) node[above=0.1cm] {$x_{p-5}$};
\path (8,3) node[draw,shape=circle,fill=black] (c2) {};
\draw (c2) node[above=0.1cm] {$x_{p-4}$};
\path (9,3) node[draw,shape=circle,fill=black] (d2) {};
\draw (d2) node[above=0.1cm] {$x_{p-3}$};
\path (10,3) node[draw,shape=circle,fill=black] (e2) {};
\draw (e2) node[above=0.1cm] {$x_{p-2}$};

\path (11,3) node[draw,shape=circle] (f) {};
\draw (f) node[above=0.1cm] {$x_{p-1}$};
\draw (a) -- (b) -- (c) -- (d) -- (e) -- ++(0.5,0)
      (f) -- (e2) -- (d2) -- (c2) -- (b2) -- (a2) -- ++(-0.5,0);

%% S2
\path (0,2) node[draw,shape=circle] (a) {};
\draw (a)+(-1.5,0) node {$P\in\mathcal S_2$:};
\draw (a) node[above=0.1cm] {$x_0$};
\path (1,2) node[draw,shape=circle,fill=black] (b) {};
\draw (b) node[above=0.1cm] {$x_1$};
\path (2,2) node[draw,shape=circle,fill=black] (c) {};
\draw (c) node[above=0.1cm] {$x_2$};
\path (3,2) node[draw,shape=circle,fill=black] (d) {};
\draw (d) node[above=0.1cm] {$x_3$};
\path (4,2) node[draw,shape=circle] (e) {};
\draw (e) node[above=0.1cm] {$x_4$};
\draw (e)+(1,0) node {\ldots};
\path (6,2) node[draw,shape=circle] (a2) {};
\draw (a2) node[above=0.1cm] {$x_{p-7}$};
\path (7,2) node[draw,shape=circle,fill=black] (b2) {};
\draw (b2) node[above=0.1cm] {$x_{p-6}$};
\path (8,2) node[draw,shape=circle,fill=black] (c2) {};
\draw (c2) node[above=0.1cm] {$x_{p-5}$};
\path (9,2) node[draw,shape=circle,fill=black] (d2) {};
\draw (d2) node[above=0.1cm] {$x_{p-4}$};
\path (10,2) node[draw,shape=circle,fill=black] (e2) {};
\draw (e2) node[above=0.1cm] {$x_{p-3}$};

\path (11,2) node[draw,shape=circle,fill=black] (f) {};
\draw (f) node[above=0.1cm] {$x_{p-2}$};
\path (12,2) node[draw,shape=circle] (g) {};
\draw (g) node[above=0.1cm] {$x_{p-1}$};
\draw (a) -- (b) -- (c) -- (d) -- (e) -- ++(0.5,0)
 (g) -- (f) -- (e2) -- (d2) -- (c2) -- (b2) -- (a2) --  ++(-0.5,0);

%% S3
\path (0,1) node[draw,shape=circle] (a) {};
\draw (a)+(-1.5,0) node {$P\in\mathcal S_3$:};
\draw (a) node[above=0.1cm] {$x_0$};
\path (1,1) node[draw,shape=circle,fill=black] (b) {};
\draw (b) node[above=0.1cm] {$x_1$};
\path (2,1) node[draw,shape=circle,fill=black] (c) {};
\draw (c) node[above=0.1cm] {$x_2$};
\path (3,1) node[draw,shape=circle,fill=black] (d) {};
\draw (d) node[above=0.1cm] {$x_3$};
\path (4,1) node[draw,shape=circle] (e) {};
\draw (e) node[above=0.1cm] {$x_4$};
\draw (e)+(1,0) node {\ldots};
\path (6,1) node[draw,shape=circle] (a2) {};
\draw (a2) node[above=0.1cm] {$x_{p-8}$};
\path (7,1) node[draw,shape=circle,fill=black] (b2) {};
\draw (b2) node[above=0.1cm] {$x_{p-7}$};
\path (8,1) node[draw,shape=circle,fill=black] (c2) {};
\draw (c2) node[above=0.1cm] {$x_{p-6}$};
\path (9,1) node[draw,shape=circle,fill=black] (d2) {};
\draw (d2) node[above=0.1cm] {$x_{p-5}$};
\path (10,1) node[draw,shape=circle] (e2) {};
\draw (e2) node[above=0.1cm] {$x_{p-4}$};

\path (11,1) node[draw,shape=circle,fill=black] (f) {};
\draw (f) node[above=0.1cm] {$x_{p-3}$};
\path (12,1) node[draw,shape=circle,fill=black] (g) {};
\draw (g) node[above=0.1cm] {$x_{p-2}$};
\path (13,1) node[draw,shape=circle,fill=black] (h) {};
\draw (h) node[above=0.1cm] {$x_{p-1}$};
\draw (a) -- (b) -- (c) -- (d) -- (e) -- ++(0.5,0)
 (h) -- (g) -- (f) -- (e2) -- (d2) -- (c2) -- (b2) -- (a2) -- ++(-0.5,0);

%% S4
\path (0,0) node[draw,shape=circle] (a) {};
\draw (a)+(-1.5,0) node {$P\in\mathcal S_4$:};
\draw (a) node[above=0.1cm] {$x_0$};
\path (1,0) node[draw,shape=circle,fill=black] (b) {};
\draw (b) node[above=0.1cm] {$x_1$};
\path (2,0) node[draw,shape=circle,fill=black] (c) {};
\draw (c) node[above=0.1cm] {$x_2$};
\path (3,0) node[draw,shape=circle,fill=black] (d) {};
\draw (d) node[above=0.1cm] {$x_3$};
\path (4,0) node[draw,shape=circle] (e) {};
\draw (e) node[above=0.1cm] {$x_4$};
\draw (e)+(1,0) node {\ldots};
\path (6,0) node[draw,shape=circle] (a2) {};
\draw (a2) node[above=0.1cm] {$x_{p-9}$};
\path (7,0) node[draw,shape=circle,fill=black] (b2) {};
\draw (b2) node[above=0.1cm] {$x_{p-8}$};
\path (8,0) node[draw,shape=circle,fill=black] (c2) {};
\draw (c2) node[above=0.1cm] {$x_{p-7}$};
\path (9,0) node[draw,shape=circle,fill=black] (d2) {};
\draw (d2) node[above=0.1cm] {$x_{p-6}$};
\path (10,0) node[draw,shape=circle] (e2) {};
\draw (e2) node[above=0.1cm] {$x_{p-5}$};

\path (11,0) node[draw,shape=circle,fill=black] (f) {};
\draw (f) node[above=0.1cm] {$x_{p-4}$};
\path (12,0) node[draw,shape=circle,fill=black] (g) {};
\draw (g) node[above=0.1cm] {$x_{p-3}$};
\path (13,0) node[draw,shape=circle,fill=black] (h) {};
\draw (h) node[above=0.1cm] {$x_{p-2}$};
\path (14,0) node[draw,shape=circle] (i) {};
\draw (i) node[above=0.1cm] {$x_{p-1}$};

\draw (a) -- (b) -- (c) -- (d) -- (e) -- ++(0.5,0)
 (i) -- (h) -- (g) -- (f) -- (e2) -- (d2) -- (c2) -- (b2) -- (a2) -- ++(-0.5,0);

\end{tikzpicture}}
\caption{Illustration of set $C(\mathcal S)$.}\label{fig:C(S)}
\end{figure}
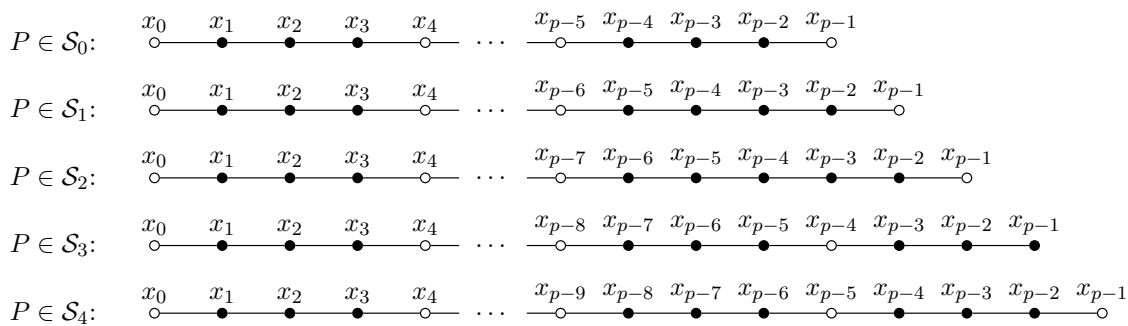

\begin{lemma}\label{lemma:C(S)}
Let $G$ be an identifiable graph of girth at least~$5$ having a vdp-cover $\mathcal S$. Then $C(\mathcal S)$ is a dominating set, and all pairs of vertices are separated, except possibly pairs $x,y$ of vertices such that $x-y$ forms a path of $\mathcal S$.
\end{lemma}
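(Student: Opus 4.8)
The plan is to read off two facts from Definition~\ref{def:C(S)} and then run a case analysis on the pair to be separated, so that we never need the full strength of Lemma~\ref{lemma:girth5-ID} (we only want separation of \emph{almost} all pairs). The two facts concern the map $f=f_{C(\mathcal S)}$: \emph{(locality)} every $x\in V(G)\setminus C(\mathcal S)$ has an image $f(x)\in N(x)\cap C(\mathcal S)$ lying on the same path $P$ of $\mathcal S$ that contains $x$ --- this is exactly the closing remark of Definition~\ref{def:C(S)}; and \emph{(injectivity)} $f$ is injective, which is immediate since on a fixed path the rules $x_j\mapsto x_{j+1}$ ($j\equiv0$), $x_j\mapsto x_{j-1}$ ($j\equiv4$) together with the tail assignments send distinct vertices to distinct targets, while images on distinct paths cannot collide because they stay inside their own path. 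Locality yields domination at once: if $x\notin C(\mathcal S)$ then $f(x)$ is a neighbour of $x$ in $C(\mathcal S)$, so $C(\mathcal S)$ dominates $V(G)$.

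For the separation claim I would take distinct $u,v$ with $N[u]\cap C(\mathcal S)=N[v]\cap C(\mathcal S)$ and argue towards the conclusion that $\{u,v\}$ is a $2$-path of $\mathcal S$. First suppose $u,v$ are non-adjacent. If one of them, say $u$, lay in $C(\mathcal S)$, then $u\in N[u]\cap C(\mathcal S)$ while $u\notin N[v]$, contradicting the equality; so both lie outside $C(\mathcal S)$, and the common set equals $N(u)\cap C(\mathcal S)=N(v)\cap C(\mathcal S)\subseteq N(u)\cap N(v)$, which has size at most~$1$ because $G$ has no $4$-cycle. Domination makes it a single vertex $z$, so $u,v\in X$ are both $1$-dominated with $f(u)=z=f(v)$, contradicting injectivity. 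Hence non-adjacent pairs are always separated. Next suppose $u,v$ are adjacent; triangle-freeness forces $N[u]\cap C(\mathcal S)=N[v]\cap C(\mathcal S)\subseteq\{u,v\}$, a non-empty set. If exactly one of them, say $u$, is in $C(\mathcal S)$, the common set is $\{u\}$, so $v$ is $1$-dominated with $u$ its unique code-neighbour; by locality $f(v)$ is a code-neighbour of $v$ on $v$'s path, whence $u=f(v)$ lies on that path. But $N[u]\cap C(\mathcal S)=\{u\}$ says $u$ is isolated in $G[C(\mathcal S)]$, while every code-vertex sitting on a path of order at least~$2$ has a code-neighbour on that same path (no run of consecutive code-vertices inside such a path has length~$1$); this contradiction rules the case out.

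The remaining, and principal, case is $u,v\in C(\mathcal S)$ adjacent: here the equality says precisely that $\{u,v\}$ is a connected component of $G[C(\mathcal S)]$ of order~$2$, and I must show that such a component comes from a $2$-path of $\mathcal S$. The approach is a direct inspection of the runs of consecutive code-vertices along each path, read off from Figure~\ref{fig:C(S)}: on every path of $\mathcal S$ of order at least~$3$ each maximal run has length at least~$3$, so each of its code-vertices keeps a code-neighbour on its own path and therefore cannot lie in an order-$2$ component; a chord of such a path cannot help either, since by girth at least~$5$ its two endpoints are at path-distance at least~$4$ and each still retains its run-neighbour. Consequently an order-$2$ component of $G[C(\mathcal S)]$ can only involve code-vertices produced by the short paths, and the situation in which both belong to one path is exactly an isolated $2$-path $x-y$ (the unique source of a length-$2$ run), which is precisely the exception allowed in the statement.

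The step I expect to be the main obstacle is this last case --- certifying that every order-$2$ component of $G[C(\mathcal S)]$ is, as claimed, a $2$-path of $\mathcal S$. The delicate configuration to watch is a pair of short paths joined by an edge of $G$ that lies outside $\mathcal S$ (for instance two $1$-paths whose single code-vertices happen to be adjacent, each with all other neighbours outside $C(\mathcal S)$). The heart of the argument is therefore the short-path bookkeeping of Definition~\ref{def:C(S)}, carried out case by case via the run-length inspection of Figure~\ref{fig:C(S)}, to confirm that no order-$2$ component other than the $2$-path survives; this is also where one should record whether the statement is to be read for arbitrary $\mathcal S$ or, as is natural in the applications, for a cover chosen so that such residual pairs of short paths do not occur.
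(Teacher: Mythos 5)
Your argument is, up to the last case, the paper's own proof written out in full: the paper likewise reads off from Definition~\ref{def:C(S)} that every $x\notin C(\mathcal S)$ has $f_{C(\mathcal S)}(x)\in N(x)\cap C(\mathcal S)$ lying on $x$'s own path, that the restriction of $f_{C(\mathcal S)}$ to $1$-dominated vertices is injective, and that the only possibly isolated code vertices come from $1$-paths and are never $f$-images (this last remark is exactly your run-length observation, and it disposes of your mixed adjacent case the same way you do). Your treatment of domination, of non-adjacent pairs, and of the adjacent pair with exactly one code vertex is correct.

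The gap is the one you flag yourself and then leave open: you never prove that an order-$2$ component of $G[C(\mathcal S)]$ must be a $2$-path of $\mathcal S$, and in fact, for an arbitrary vdp-cover, no bookkeeping can prove it, because the claim is false as literally stated. Take adjacent vertices $u,v$, join $u$ to an endpoint of a path $x_0-x_1-x_2-x_3-x_4$ and $v$ to an endpoint of a disjoint path $y_0-y_1-y_2-y_3-y_4$, and let $\mathcal S=\bigl\{\{u\},\{v\},\,x_0-\cdots-x_4,\,y_0-\cdots-y_4\bigr\}$. The graph is a tree (so the girth hypothesis holds) and is identifiable, and Definition~\ref{def:C(S)} yields $C(\mathcal S)=\{u,v,x_1,x_2,x_3,y_1,y_2,y_3\}$, whence $N[u]\cap C(\mathcal S)=N[v]\cap C(\mathcal S)=\{u,v\}$: the pair $u,v$ is not separated, yet $u-v$ is not a path of $\mathcal S$ but a pair of adjacent $1$-paths. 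So the ``delicate configuration'' you describe is a genuine counterexample, not merely a hard subcase. You should be aware that the paper's three-line proof glosses over exactly this point: its parenthetical about $1$-paths only rules out a $1$-path vertex occurring as an $f$-image (your mixed case), and says nothing about two adjacent code vertices coming from two distinct $1$-paths, so the lemma as printed needs either an extra exception for such pairs or a normalization of the cover. The clean repair is to assume, without loss of generality, that no two $1$-paths of $\mathcal S$ are joined by an edge of $G$ --- if they are, merge them into a single $2$-path, which decreases $|\mathcal S|$ and is therefore harmless in the application (Theorem~\ref{thm:vdp-id-cubic}, whose Step~2 already repairs $2$-path components). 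Under that normalization your run-length analysis does close the final case: every code vertex on a path of order at least~$3$ lies in a code-run of length at least~$3$, a $2$-path vertex with an outside code neighbour lies in a component of order at least~$3$, and a $1$-path vertex can then pair neither with another $1$-path vertex nor with anything else to form an order-$2$ component.
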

\begin{proof}
The proof follows from Lemma~\ref{lemma:girth5-ID}; indeed, each vertex $x$ of $V(G)\setminus C(\mathcal S)$, with $x\in P$ and $P\in\mathcal S$, that is not $2$-dominated has an image $f_{C(\mathcal S)}(x)\in P$, the restriction of $f_{C(\mathcal S)}$ to the set of $1$-dominated vertices is injective, and the only potentially isolated vertices in $C(\mathcal S)$ are vertices $v$ belonging to a path of $\mathcal S$ of order~$1$ (hence by construction no vertex $x$ has $f_{C(\mathcal S)}(x)=v$).
\end{proof}

Similarly to Theorem~\ref{thm:vdp-ld-cubic} for locating-dominating sets, we have the following generic theorem:

\begin{theorem}\label{thm:vdp-id-cubic}
Let $G$ be an identifiable graph of order $n$, girth at least~$5$ and having a vdp-cover with $\alpha\cdot n$ paths. Then $\M(G)\leq\frac{3+4\alpha}{5}n$.
\end{theorem}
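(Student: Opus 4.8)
The plan is to mirror the proof of Theorem~\ref{thm:vdp-ld-cubic}, replacing the locating-dominating construction $D(\mathcal S)$ by the identifying-code construction $C(\mathcal S)$ of Definition~\ref{def:C(S)}. Concretely, I would fix a vdp-cover $\mathcal S$ of $G$ with $|\mathcal S|\le \alpha n$ paths and build the candidate code $C(\mathcal S)$. By Lemma~\ref{lemma:C(S)}, $C(\mathcal S)$ is a dominating set whose associated function $f_{C(\mathcal S)}$ witnesses condition~(ii) of Lemma~\ref{lemma:girth5-ID}, and every pair of vertices is separated except possibly the two endpoints of a path of order~$2$ in $\mathcal S$. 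So two things remain: first, to guarantee condition~(i) of Lemma~\ref{lemma:girth5-ID}, i.e.\ that $G[C(\mathcal S)]$ has no component of order~$2$ and that any $2$-paths are actually separated; and second, to bound $|C(\mathcal S)|$.

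For the size estimate I would count, exactly as in Theorem~\ref{thm:vdp-ld-cubic}, how many vertices $C(\mathcal S)$ takes from a path $P\in\mathcal S_i$ of order $5k+i$: the first step of Definition~\ref{def:C(S)} contributes $3k$ vertices, while the second step adds $0,1,2,3,3$ further vertices for $i=0,1,2,3,4$ respectively. Writing $n=\sum_P(5k_P+i_P)$, this gives $|C(\mathcal S)| = \tfrac{3}{5}n + \sum_{P\in\mathcal S} o(i_P)$, where the per-path overhead is $o(0)=0$, $o(1)=\tfrac25$, $o(2)=\tfrac45$, $o(3)=\tfrac65$, and $o(4)=\tfrac35$. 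If every overhead were at most $\tfrac45$ we would immediately obtain $|C(\mathcal S)|\le \tfrac{3}{5}n+\tfrac45|\mathcal S|\le\tfrac{3+4\alpha}{5}n$, in perfect analogy with the locating-dominating case.

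The single overhead exceeding this budget is $o(3)=\tfrac65$, so the crux of the argument — and the step I expect to be the main obstacle — is to eliminate the paths whose order is $\equiv 3 \pmod 5$. Merely splitting such a path does not help, since any partition of a residue-$3$ path into shorter paths again has total overhead $\tfrac65$; the productive move is to reorganise the cover, merging a residue-$3$ path with a neighbouring path (which lowers both the number of paths and the overhead), or, when that is impossible, recoding its terminal block with fewer vertices. The difficulty is that $C(\mathcal S)$ deliberately places three consecutive vertices at the end of a residue-$3$ path precisely so that the terminal code-vertex lies in a component of $G[C]$ of order at least~$3$, as required for it to be a valid $f$-image in Lemma~\ref{lemma:girth5-ID}; shaving off a vertex is only safe when that terminal vertex never has to serve as an identifying image, i.e.\ when all of its outside neighbours are already $2$-dominated. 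I would therefore choose $\mathcal S$ so as to minimise a penalty weighting residue-$3$ paths and order-$2$ paths, and then run an exchange argument in the spirit of the minimality analysis of Theorem~\ref{thm:vdp-ID-delta-2}: a surviving residue-$3$ or order-$2$ path whose end-vertices attach to the rest of the cover would permit a strictly improving reorganisation, contradicting minimality (degenerate small cases with no neighbouring path, such as an isolated $P_3$, being checked directly against the bound). Once no residue-$3$ path and no order-$2$ path remain — so that condition~(i) of Lemma~\ref{lemma:girth5-ID} holds and every per-path overhead is at most $\tfrac45$ — the summation above yields $\M(G)\le\tfrac{3+4\alpha}{5}n$.
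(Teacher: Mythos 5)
Your setup, the use of $C(\mathcal S)$ with Lemma~\ref{lemma:C(S)}, and the per-path overhead accounting ($o(0),\dots,o(4)=0,\frac25,\frac45,\frac65,\frac35$) all match the paper, and you correctly locate the crux at the residue-$3$ paths. But the strategy you propose for that crux --- choose $\mathcal S$ minimizing a penalty and eliminate all residue-$3$ and order-$2$ paths by exchanges --- is a step that fails. The theorem carries no minimum-degree or connectivity hypothesis (it is stated for any graph admitting a vdp-cover with $\alpha n$ paths), so a residue-$3$ path may be an entire connected component, or attach only to mid-path vertices; merging, which needs an endpoint-to-endpoint edge, is then unavailable, and splitting only increases $|\mathcal S|$ beyond the budget $\alpha n$. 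Concretely, take $G=C_8$ with the cover consisting of one $8$-path, so $\alpha n=1$: no vdp-cover of $C_8$ with at most one path avoids residue $3$, yet the bound $\frac{3+4\alpha}{5}n=5.6$ must still be proved (here $\M(C_8)=5$). Even when the endpoints do attach elsewhere, the claimed ``strictly improving reorganisation'' need not exist: attaching a residue-$3$ path to the middle $(2,2)$-vertex of a $5$-path yields a residue-$1$ path plus a $2$-path, with total overhead again $\frac25+\frac45=\frac65$ and the same number of paths --- your penalty does not decrease, and you have even created an order-$2$ path.

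What is missing is that the ``shaving'' you dismiss as only conditionally safe is in fact \emph{always} possible; the paper never touches the cover after Step~1. For each $P\in\mathcal S_3$ with $p=5k+3$, delete $x_{p-3},x_{p-2},x_{p-1}$ from $C$ and reinsert only two vertices: (a) if $x_{p-2}$ or $x_{p-1}$ has a neighbour in $C$, reinsert $x_{p-2},x_{p-1}$ with $f(x_{p-3})=x_{p-2}$; (b) if $x_{p-3}$ has a neighbour in $C$, reinsert $x_{p-3},x_{p-2}$ with $f(x_{p-1})=x_{p-2}$ --- in both cases the reinserted pair lies in a component of $G[C]$ of order at least~$3$, as Lemma~\ref{lemma:girth5-ID}(ii) requires; (c) if none of the three sees $C$, reinsert the \emph{non-adjacent} pair $x_{p-3},x_{p-1}$, so that $x_{p-2}$ becomes $2$-dominated and needs no $f$-image at all, while the resulting isolated code vertices are harmless because Lemma~\ref{lemma:girth5-ID}(i) forbids only components of order exactly~$2$. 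This saves one vertex per path of $\mathcal S_3$ unconditionally, turning the overhead $\frac65$ into $\frac15$; your assertion that shaving is safe only ``when all outside neighbours are already $2$-dominated'' overlooks option (c), i.e.\ $2$-dominating the middle vertex instead of providing it an image. Order-$2$ paths likewise need no removal: if such a path forms a $2$-component of $G[C]$, identifiability yields a neighbour $y\notin C$ of (say) $x_1$, and swapping $x_0$ out for $y$ with $f(x_0)=x_1$ restores condition~(i) at zero cost. With these two local, hypothesis-free repairs, your own summation already gives $\M(G)\leq\frac{3+4\alpha}{5}n$, and no minimality assumption on $\mathcal S$ is needed anywhere.
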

\begin{proof}
Let $\mathcal S$ be the vdp-cover of $G$. The idea is to construct a set $C$ and an injective function $f:X\rightarrow C$ (where $X$ is the set of $1$-dominated vertices of $V(G)\setminus C$) meeting the conditions of Lemma~\ref{lemma:girth5-ID}. We will build $C$ by taking roughly three vertices out of five in each path of $\mathcal S$, then adding a few vertices for each path whose length is nonzero modulo five, and finally performing a few local modifications.

\vspace{0.3cm}\noindent\textit{Step 1: Constructing an initial pseudo-code.} We construct $C=C(\mathcal S)$ and $f=f_{C(\mathcal S)}$ by the procedure described in Definition~\ref{def:C(S)}.

\vspace{0.3cm}
\noindent\textit{Step 2: Taking care of components of $G[C]$ of order 2.}  By Lemma~\ref{lemma:C(S)}, all conditions of Lemma~\ref{lemma:girth5-ID} (where we consider the restriction of $f$ to the set $X$ of $1$-dominated vertices) are fulfilled, except for Property~(i): there might be some paths in $\mathcal S_2$ of order exactly~$2$ and forming a connected component of $G[C]$ (second item of our case distinction). Let $P$ be such a path, and $V(P)=\{x_0,x_1\}$. Then, since $G$ is identifiable, one of $x_0,x_1$ (say $x_1$) has a neighbour $y$, and since $P$ is a connected component in $G[C]$, $y\notin C$. By the construction of $C$, $y$ belongs to a path and is adjacent to vertex $f(y)$ in $C$. We perform the following modification: remove $x_0$ from $C$, put $y$ instead, and let $f(x_0)=x_1$. It is clear that repeating this for each such case, we get rid of all components of order~$2$ in $G[C]$.

Now, all conditions of Lemma~\ref{lemma:girth5-ID} are fulfilled, hence
$C$ is an identifying code of $G$.

\vspace{0.3cm}
\noindent\textit{Step 3: Saving one vertex for each path of $\mathcal S_3$.}

We consider all paths in $\mathcal S_3$ one by one, in an arbitrary order. For each such path $P$ with $V(P)=\{x_0,\ldots,x_{p-1}\}$ ($p=5k+3$ for some $k\geq 0$), we remove $x_{p-3},x_{p-2},x_{p-1}$ from $C$. We now distinguish some cases.

If $x_i\in\{x_{p-2},x_{p-1}\}$ has a neighbour in $C$, then, we add both $x_{p-2},x_{p-1}$ to $C$ and let $f(x_{p-3})=x_{p-2}$. Similarly, if $x_{p-3}$ has a neighbour in $C$, we add both $x_{p-3},x_{p-2}$ to $C$ and let $f(x_{p-1})=x_{p-2}$. Note that in both cases, the two new code-vertices are now part of a component of $G[C]$ of order at least~$3$, hence all conditions of Lemma~\ref{lemma:girth5-ID} are preserved.

If none of $x_{p-3},x_{p-2},x_{p-1}$ have a neighbour in $C$, we add $x_{p-3}$ and $x_{p-1}$ to $C$. Note that $x_{p-2}$ is now $2$-dominated, hence all conditions of Lemma~\ref{lemma:girth5-ID} are again preserved.

Repeating this at every step, $C$ is still an identifying code, and we have decreased the size of $C$ by $|\mathcal S_3|$.

\vspace{0.3cm}
\noindent\textit{Step 4: Estimating the size of the code.} It remains to compute the size of $C$.

For each path $P$ in $\mathcal S_i$ with $5k+i$ vertices, we have added $\frac{3k}{5}$ vertices of $P$ to $C$ in the first phase of the construction of $C(\mathcal S)$ (Definition~\ref{def:C(S)}). Then, in the second phase of the construction of Definition~\ref{def:C(S)}, for each path in $\mathcal S_1,\mathcal S_2,\mathcal S_3,\mathcal S_4$, we have added one, two, three and three additional vertices, respectively. In Steps~2 and~3, we did not change the size of $C$, but in Step~4, we were able to reduce the size of $C$ by one for each path in $\mathcal S_3$. So in total we have:

\begin{align*}
|C|&\leq\frac{3}{5}(n-|\mathcal S_1|-2|\mathcal S_2|-3|\mathcal S_3|-4|\mathcal S_4|)+|\mathcal S_1|+2|\mathcal S_2|+3|\mathcal S_3|+3|\mathcal S_4|-|\mathcal S_3|\\
 &=\frac{3}{5}n+\frac{2}{5}|\mathcal S_1|+\frac{4}{5}|\mathcal S_2|+\frac{1}{5}|\mathcal S_3|+\frac{3}{5}|\mathcal S_4|\\
&\leq\frac{3}{5}n+\frac{4}{5}|\mathcal S|\\
 &\leq\frac{3+4\alpha}{5}n.
\end{align*}
\end{proof}

We get the following improvement for cubic graphs, a corollary of Theorems~\ref{thm:reed-vdp} and~\ref{thm:vdp-id-cubic}:

\begin{corollary}\label{cor:ID}
Let $G$ be a connected cubic identifiable graph of order $n$ and girth at least~5. Then $\M(G)\leq\frac{31}{45}n<0.689n$.
\end{corollary}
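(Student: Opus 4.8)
The plan is simply to combine the two results just established, since Corollary~\ref{cor:ID} is a direct specialization of the generic bound to the cubic case. First I would invoke Reed's theorem (Theorem~\ref{thm:reed-vdp}), which guarantees that the connected cubic graph $G$ admits a vdp-cover consisting of at most $\frac{n}{9}$ paths. This allows me to take $\alpha=\frac{1}{9}$ in the hypothesis of Theorem~\ref{thm:vdp-id-cubic}.

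Next, since $G$ is assumed identifiable and has girth at least~$5$, the hypotheses of Theorem~\ref{thm:vdp-id-cubic} are satisfied, so I would apply it directly with $\alpha=\frac{1}{9}$ to obtain
$$\M(G)\leq\frac{3+4\alpha}{5}n=\frac{3+\frac{4}{9}}{5}n=\frac{31}{45}n.$$
Finally I would verify the numerical estimate $\frac{31}{45}=0.6888\ldots<0.689$ to conclude.

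There is essentially no obstacle here: the corollary is a one-line substitution of $\alpha=\frac{1}{9}$ into the bound of Theorem~\ref{thm:vdp-id-cubic}, followed by an elementary arithmetic simplification. The only points worth keeping in mind are that the identifiability hypothesis on $G$ is precisely what ensures Theorem~\ref{thm:vdp-id-cubic} applies (equivalently, that an identifying code exists at all), and that Reed's theorem is what supplies a sufficiently sparse vdp-cover, keeping $\alpha$ small enough to produce the stated constant.
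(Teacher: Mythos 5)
Your proposal is correct and matches the paper exactly: the corollary is stated there as an immediate consequence of Theorem~\ref{thm:reed-vdp} (giving a vdp-cover with $\alpha=\frac{1}{9}$) plugged into Theorem~\ref{thm:vdp-id-cubic}, yielding $\frac{3+4/9}{5}n=\frac{31}{45}n<0.689n$. Your remarks about identifiability and the role of Reed's theorem are accurate and complete.
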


\section{Constructions}\label{sec:cons}

In this section, we provide constructions of connected graphs with girth at least~$5$ and large location-domination or identifying code number. First of all, the following result is a lower bound on parameters $\LD$ and $\M$ depending on the maximum degree $\Delta$ of a graph. It will be useful since it also applies to $\Delta$-regular graphs.

\begin{theorem}[\cite{F,KCL98,S95}]\label{th:LB-Delta}
Let $G$ be a graph of order $n$ and maximum degree $\Delta$. Then $\LD(G)\geq\frac{2n}{\Delta+3}$. If $G$ is identifiable, then $\M(G)\geq\frac{2n}{\Delta+2}$, and any identifying code of this size is an independent $2$-dominating set whose vertices all have degree
$\Delta$ in $G$.
\end{theorem}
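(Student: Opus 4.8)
The plan is to prove the lower bounds by a counting (discharging) argument on the number of ``separation'' constraints that a code or locating-dominating set must satisfy, relating this to the degree budget $\Delta$. Let me set up the counting for the identifying code case first, since it is the one for which we also need the characterization of equality.

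\medskip
\noindent\emph{The identifying code bound.}
Let $C$ be an identifying code of $G$. For each vertex $v\in V(G)$, consider its \emph{identifying set} $N[v]\cap C$, which is nonempty (domination) and pairwise distinct over all $v$ (separation). First I would count the quantity $\sum_{v\in V(G)}|N[v]\cap C|$ in two ways. On one hand, each code-vertex $c\in C$ lies in $N[v]$ for exactly $\deg(c)+1\le \Delta+1$ vertices $v$, so this sum is at most $(\Delta+1)|C|$. On the other hand, I want a lower bound on the same sum in terms of $n$. The key combinatorial fact is that the identifying sets are $n$ distinct nonempty subsets of $C$, with at most $|C|$ of them being singletons (the singletons $\{c\}$ are indexed by $c\in C$), so at least $n-|C|$ of the vertices have $|N[v]\cap C|\ge 2$. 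This gives $\sum_v |N[v]\cap C| \ge |C|\cdot 1 + (n-|C|)\cdot 2 = 2n-|C|$. Combining, $2n-|C|\le (\Delta+1)|C|$, i.e. $|C|\ge \frac{2n}{\Delta+2}$, which is exactly the claimed bound $\M(G)\ge\frac{2n}{\Delta+2}$.

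\medskip
\noindent\emph{The equality characterization.}
Next I would trace the equality case through the two inequalities above. Equality in $\sum_c(\deg(c)+1)\le(\Delta+1)|C|$ forces every code-vertex to have degree exactly $\Delta$. Equality in the lower bound forces that \emph{every} vertex is covered either exactly once (a singleton, one per code-vertex) or exactly twice, with no vertex covered three or more times; moreover the singleton sets must be exactly $\{c\}$ for each $c\in C$. A vertex $c\in C$ whose identifying set is the singleton $\{c\}$ has no code-vertex among its neighbours, so $C$ is independent. Every non-code vertex is then $2$-dominated (covered exactly twice), which is precisely the $2$-dominating condition; and since code-vertices are independent, each $c\in C$ is itself dominated only by $c$, consistently giving the singleton $\{c\}$. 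Thus any code meeting the bound is an independent $2$-dominating set of degree-$\Delta$ vertices.

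\medskip
\noindent\emph{The locating-dominating bound.}
For $\LD(G)\ge\frac{2n}{\Delta+3}$ the same double-counting works but restricted to $V(G)\setminus C$: here we only need to separate the non-code vertices and dominate all of them. I would count $\sum_{v\in V(G)\setminus C}|N(v)\cap C|$. Each $c\in C$ contributes to at most $\deg(c)\le\Delta$ such vertices, giving an upper bound of $\Delta|C|$. For the lower bound, the sets $N(v)\cap C$ for $v\notin C$ are $n-|C|$ distinct nonempty subsets of $C$, at most $|C|$ of them singletons, so at least $(n-|C|)-|C|$ have size $\ge 2$; this yields $\sum\ge 2(n-|C|)-|C|=2n-3|C|$. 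Hence $2n-3|C|\le \Delta|C|$, giving $|C|\ge\frac{2n}{\Delta+3}$.

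\medskip
The main obstacle is bookkeeping the singleton identifying sets correctly: one must argue that at most $|C|$ of the distinct nonempty subsets can be singletons (since distinct singletons require distinct code-vertices), and in the equality analysis confirm that these singletons are forced to be exactly the self-dominated code-vertices. Everything else is routine double counting, and no girth hypothesis is needed for this particular statement.
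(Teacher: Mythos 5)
Your two lower bounds are proved correctly, and by the standard argument: the paper itself contains no proof of this theorem (it is quoted from \cite{F,KCL98,S95}, with the remark that the equality characterization follows from the bound's proof as written out in the first author's thesis), and your double counting of $\sum_{v\in V(G)}|N[v]\cap C|=\sum_{c\in C}(\deg(c)+1)\leq(\Delta+1)|C|$ against the lower bound $2n-|C|$ (at most $|C|$ distinct singleton traces), and its open-neighbourhood analogue over $V(G)\setminus C$ for $\LD$, is exactly that classical argument. No girth hypothesis is needed, as you say.

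There is, however, a genuine gap in the equality analysis, and it is precisely the point you flag as ``the main obstacle'' without resolving it. You write that a vertex $c\in C$ whose identifying set is the singleton $\{c\}$ has no code-neighbour, ``so $C$ is independent'' --- but the equality conditions only force that each singleton $\{c\}$, $c\in C$, occurs as the trace of \emph{some} vertex, not that this vertex is $c$ itself. A priori $\{c\}$ could be the trace of a non-code neighbour of $c$, while $c$'s own trace is $\{c,c'\}$ for some code-neighbour $c'$; your argument then shows nothing about such $c$, and your later sentence (``since code-vertices are independent, each $c\in C$ is itself dominated only by $c$'') uses independence to place the singletons at the code-vertices, which is circular. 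The fix is one line: equality forces every trace to have size at most $2$, so if $c,c'\in C$ were adjacent, then $N[c]\cap C$ and $N[c']\cap C$ would both contain $\{c,c'\}$ and hence both equal $\{c,c'\}$, so $c$ and $c'$ are not separated by $C$ --- a contradiction. Hence $C$ is independent, each $c\in C$ then realizes the singleton $\{c\}$ itself, the remaining $n-|C|$ vertices have traces of size exactly $2$ (i.e.\ $C$ is $2$-dominating), and tightness of the first inequality gives $\deg(c)=\Delta$ for every $c\in C$. With that observation inserted, your proof is complete.
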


We remark that the last part of the statement is not very difficult to obtain from the proof of the bound; a proof is available in the first author's PhD thesis~\cite[Section 4.1]{F}.

\subsection{Generic constructions}

We now define constructions based on the Petersen graph that will be used later on.

\begin{definition}\label{def:petersens} Denote by $P_{10}$ the Petersen graph with $V(P_{10})=\{0,1,2,3,4,5,6,7,8,9\}$, and $0-1-2-\ldots-9$ one of its Hamiltonian paths, such that vertices $1$ and $9$ are adjacent. Let $P_{11}$ be the graph obtained from $P_{10}$ by subdividing once the edge $\{0,1\}$, and calling the new vertex $x$.
\end{definition}

The graphs of Definition~\ref{def:petersens} are illustrated in Figure~\ref{fig:petersens}.

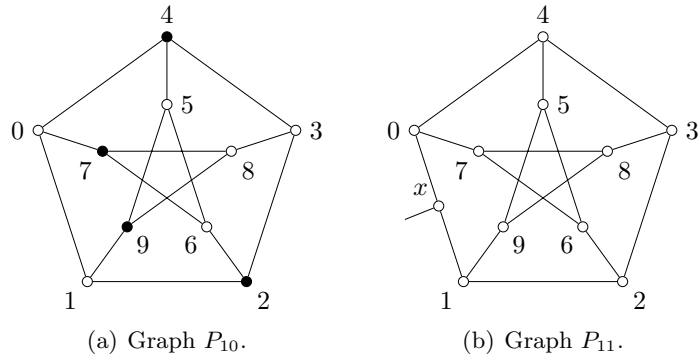
\begin{figure}[!ht]
\begin{center}
\subfigure[Graph $P_{10}$.]{\scalebox{0.9}{\begin{tikzpicture}[join=bevel,inner sep=0.5mm]
\node[graphnode](8) at (18:1) {};
\draw (8) node[below right=0.1cm] {$8$};
\node[graphnode](5) at (90:1) {};
\draw (5) node[right=0.15cm] {$5$};
\node[graphnode,fill](7) at (162:1) {};
\draw (7) node[below left=0.1cm] {$7$};
\node[graphnode,fill](9) at (234:1) {};
\draw (9) node[below right=0.08cm] {$9$};
\node[graphnode](6) at (306:1) {};
\draw (6) node[below left=0.08cm] {$6$};
\node[graphnode](3) at (18:2) {};
\draw (3) node[right=0.15cm] {$3$};
\node[graphnode,fill](4) at (90:2) {};
\draw (4) node[above=0.15cm] {$4$};
\node[graphnode](0) at (162:2) {};
\draw (0) node[left=0.15cm] {$0$};
\node[graphnode](1) at (234:2) {};
\draw (1) node[below left=0.1cm] {$1$};
\node[graphnode,fill](2) at (306:2) {};
\draw (2) node[below right=0.1cm] {$2$};
\node (-6,0) {};
\node (6,0) {};
\draw[-] (7)--(6)--(5)--(9)--(8)--(7);
\draw[-] (3)--(4)--(0)--(1)--(2)--(3);
\draw[-] (7)--(0);
\draw[-] (5)--(4);
\draw[-] (6)--(2);
\draw[-] (9)--(1);
\draw[-] (8)--(3);
\end{tikzpicture}}}\qquad
\subfigure[Graph $P_{11}$.]{\scalebox{0.9}{\begin{tikzpicture}[join=bevel,inner sep=0.5mm]
\node[graphnode](8) at (18:1) {};
\draw (8) node[below right=0.1cm] {$8$};
\node[graphnode](5) at (90:1) {};
\draw (5) node[right=0.15cm] {$5$};
\node[graphnode](7) at (162:1) {};
\draw (7) node[below left=0.1cm] {$7$};
\node[graphnode](9) at (234:1) {};
\draw (9) node[below right=0.08cm] {$9$};
\node[graphnode](6) at (306:1) {};
\draw (6) node[below left=0.08cm] {$6$};
\node[graphnode](3) at (18:2) {};
\draw (3) node[right=0.15cm] {$3$};
\node[graphnode](4) at (90:2) {};
\draw (4) node[above=0.15cm] {$4$};
\node[graphnode](0) at (162:2) {};
\draw (0) node[left=0.15cm] {$0$};
\node[graphnode](x) at (198:1.62) {};
\draw (x) node[above left=0.1cm] {$x$};
\node[graphnode](1) at (234:2) {};
\draw (1) node[below left=0.1cm] {$1$};
\node[graphnode](2) at (306:2) {};
\draw (2) node[below right=0.1cm] {$2$};

\draw[-] (7)--(6)--(5)--(9)--(8)--(7);
\draw[-] (3)--(4)--(0) -- (x) -- (1)--(2)--(3);
\draw[-] (7)--(0);
\draw[-] (5)--(4);
\draw[-] (6)--(2);
\draw[-] (9)--(1);
\draw[-] (8)--(3);
\draw[-] (x)+(-0.5,-0.2)--(x);
\end{tikzpicture}}}
\end{center}
\caption{\label{fig:petersens} The Petersen graph $P_{10}$ and its modification $P_{11}$. The black vertices form an optimal identifying code and locating-dominating set of $P_{10}$.}
\end{figure}

\begin{definition}\label{def:G_11^k} 
For any $k\geq 2$, let $G_{11}^k$ be the graph formed by a vertex $y$
connected to $k$ copies of $P_{11}$ (each attached via vertex $x$).
\end{definition}

The graph $G_{11}^k$ is illustrated in Figure~\ref{fig:P11's}.

\begin{figure}[!ht]
\begin{center}
\subfigure[An optimal locating-dominating set of $G_{11}^k$.]{\label{fig:P11's-LD}\scalebox{0.8}{\begin{tikzpicture}[join=bevel,inner sep=0.5mm]
\node[graphnode](y) at (2.5,3) {};
\draw (y) node[above=0.15cm] {$y$};
\node[graphnode](8) at (18:1) {};
\path (8)+(5,0) node[graphnode,fill] (81) {};
\node[graphnode,fill](5) at (90:1) {};
\path (5)+(5,0) node[graphnode,fill] (51) {};
\node[graphnode,fill](7) at (162:1) {};
\path (7)+(5,0) node[graphnode] (71) {};
\node[graphnode](9) at (234:1) {};
\path (9)+(5,0) node[graphnode] (91) {};
\node[graphnode](6) at (306:1) {};
\path (6)+(5,0) node[graphnode] (61) {};
\node[graphnode](3) at (18:2) {};
\path (3)+(5,0) node[graphnode] (31) {};
\node[graphnode](4) at (90:2) {};
\path (4)+(5,0) node[graphnode] (41) {};
\node[graphnode](0) at (162:2) {};
\path (0)+(5,0) node[graphnode] (01) {};
\node[graphnode,fill](x) at (54:1.62) {};
\path (x)+(3.1,0) node[graphnode,fill] (x1) {};
\node[graphnode,fill](1) at (234:2) {};
\path (1)+(5,0) node[graphnode] (11) {};
\node[graphnode](2) at (306:2) {};
\path (2)+(5,0) node[graphnode,fill] (21) {};

\path (2.5,1.5) node  {$\ldots$};

\draw[-] (7)--(6)--(5)--(9)--(8)--(7);
\draw[-] (3)--(x)--(4)--(0) -- (1)--(2)--(3);
\draw[-] (7)--(0);
\draw[-] (5)--(4);
\draw[-] (6)--(2);
\draw[-] (9)--(1);
\draw[-] (8)--(3);
\draw[-] (x)--(y)--(x1);

\draw[-] (71)--(61)--(51)--(91)--(81)--(71);
\draw[-] (31)--(41)--(x1)--(01) -- (11)--(21)--(31);
\draw[-] (71)--(01);
\draw[-] (51)--(41);
\draw[-] (61)--(21);
\draw[-] (91)--(11);
\draw[-] (81)--(31);;
\end{tikzpicture}}}\qquad
\subfigure[An optimal identifying code of $G_{11}^k$.]{\label{fig:P11's-ID}\scalebox{0.8}{\begin{tikzpicture}[join=bevel,inner sep=0.5mm]
\node[graphnode](y) at (2.5,3) {};
\draw (y) node[above=0.15cm] {$y$};
\node[graphnode,fill](8) at (18:1) {};
\path (8)+(5,0) node[graphnode] (81) {};
\node[graphnode,fill](5) at (90:1) {};
\path (5)+(5,0) node[graphnode,fill] (51) {};
\node[graphnode](7) at (162:1) {};
\path (7)+(5,0) node[graphnode,fill] (71) {};
\node[graphnode](9) at (234:1) {};
\path (9)+(5,0) node[graphnode] (91) {};
\node[graphnode](6) at (306:1) {};
\path (6)+(5,0) node[graphnode] (61) {};
\node[graphnode](3) at (18:2) {};
\path (3)+(5,0) node[graphnode,fill] (31) {};
\node[graphnode](4) at (90:2) {};
\path (4)+(5,0) node[graphnode] (41) {};
\node[graphnode,fill](0) at (162:2) {};
\path (0)+(5,0) node[graphnode] (01) {};
\node[graphnode,fill](x) at (54:1.62) {};
\path (x)+(3.1,0) node[graphnode,fill] (x1) {};
\node[graphnode](1) at (234:2) {};
\path (1)+(5,0) node[graphnode,fill] (11) {};
\node[graphnode,fill](2) at (306:2) {};
\path (2)+(5,0) node[graphnode] (21) {};

\path (2.5,1.5) node  {$\ldots$};

\draw[-] (7)--(6)--(5)--(9)--(8)--(7);
\draw[-] (3)--(x)--(4)--(0) -- (1)--(2)--(3);
\draw[-] (7)--(0);
\draw[-] (5)--(4);
\draw[-] (6)--(2);
\draw[-] (9)--(1);
\draw[-] (8)--(3);
\draw[-] (x)--(y)--(x1);

\draw[-] (71)--(61)--(51)--(91)--(81)--(71);
\draw[-] (31)--(41)--(x1)--(01) -- (11)--(21)--(31);
\draw[-] (71)--(01);
\draw[-] (51)--(41);
\draw[-] (61)--(21);
\draw[-] (91)--(11);
\draw[-] (81)--(31);;
\end{tikzpicture}}}
\end{center}
\caption{\label{fig:P11's} The graph $G_{11}^k$.}
\end{figure}
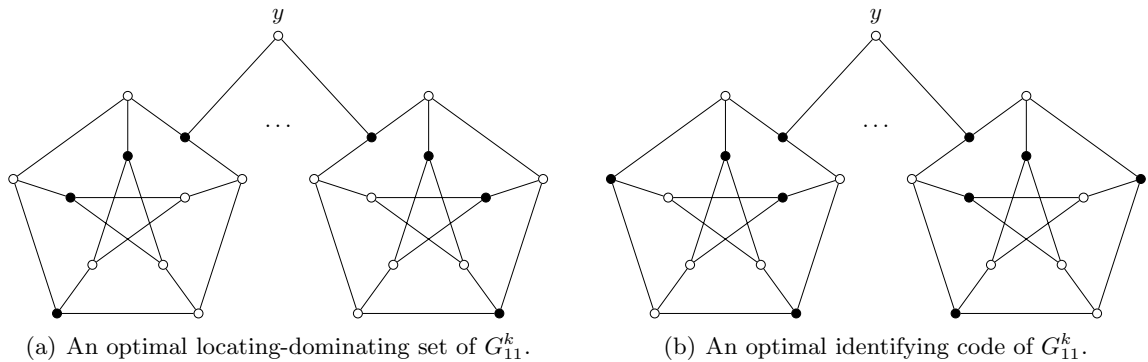

\subsection{Locating-dominating sets}

We now give constructions with large location-domination number. The first construction is based on copies of the $6$-cycle $C_6$.

\begin{proposition}\label{prop:example-LD-n/2}
There are infinitely many connected graphs $G$ of order $n$, girth~$5$ and minimum degree~$2$ with $\LD(G)=\frac{n-1}{2}$.
\end{proposition}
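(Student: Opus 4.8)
The plan is to construct an explicit family of connected graphs of girth~$5$ and minimum degree~$2$ whose location-domination number equals $\frac{n-1}{2}$, thereby showing the bound of Theorem~\ref{thm:vdp-LD} is essentially tight even among connected graphs. The natural building block is $C_6$, which has $\LD(C_6)=3=\frac{6}{2}$, so I would chain together $k$ copies of $C_6$ into a single connected graph of girth~$5$. A clean way to do this is to introduce one extra ``hub'' vertex $y$ and attach each copy of $C_6$ to $y$ through a single designated vertex of the cycle, so that $G$ has order $n=6k+1$; then $\frac{n-1}{2}=3k$, which is exactly $k$ times the cost of one $C_6$. Attaching via a single pendant-type connection keeps the girth at~$5$ (no short cycle is created through $y$, since two vertices of distinct copies are at distance~$2$ through $y$ and any cycle using $y$ must traverse at least two full cycle-arcs), and every vertex retains degree at least~$2$.

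First I would verify the structural properties: $G$ is connected by construction through $y$; the girth is~$5$ provided the attachment edges do not create a $3$- or $4$-cycle, which I would check by noting that $y$ has neighbours in distinct $C_6$'s that are pairwise non-adjacent and share no common neighbour other than $y$; and $\delta(G)\geq 2$ since each cycle vertex already has degree~$2$ and $y$ has degree~$k\geq 2$. The upper bound $\LD(G)\leq\frac{n-1}{2}=3k$ follows either from a direct exhibition of a locating-dominating set (take the three standard code-vertices in each $C_6$, as in Figure~\ref{fig:P11's-LD}, and argue $y$ is then dominated and separated) or, more cheaply, from a counting argument; the subtlety is shaving off the ``$-1$'' relative to $\frac{n}{2}$, which comes from the single hub vertex $y$ that need not be taken into the code.

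The crux is the matching lower bound $\LD(G)\geq\frac{n-1}{2}=3k$. Here I would invoke Lemma~\ref{lemma:girth5-LD}: any locating-dominating set $C$ restricted to a single copy of $C_6$ must dominate all six cycle-vertices and admit the required injection on the $1$-dominated vertices among them. The key local claim is that every copy of $C_6$ must contribute at least~$3$ vertices to $C$ (essentially because $\LD(C_6)=3$, and the presence of the single attachment edge to $y$ cannot help, since $y$ can dominate at most one vertex per copy and contributes nothing toward separating the remaining five). Summing this lower bound of~$3$ over the $k$ copies gives $|C|\geq 3k=\frac{n-1}{2}$.

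The main obstacle I anticipate is making the per-copy lower bound fully rigorous, because $C$ could in principle place $y$ into the code and exploit it to dominate or separate boundary vertices of several copies simultaneously, or it could leave the attachment vertex of some copy out of $C$ and have it dominated from $y$. I would handle this by a careful case analysis on each copy $H_i$ (a $C_6$ plus its attachment vertex's edge to $y$): treating $y$ as an external vertex that can dominate at most the single attachment vertex of $H_i$, I would show via Lemma~\ref{lemma:girth5-LD} that fewer than three code-vertices inside $H_i$ force either an undominated vertex or two vertices of $H_i$ with the same trace on $C$, and that the contribution of $y$ is too limited to repair this in more than one copy. Globally, since $y$ is a single vertex it can be ``charged'' to at most one copy, which is precisely what yields $3k$ rather than $3k-1$ and pins down the exact value $\LD(G)=\frac{n-1}{2}$.
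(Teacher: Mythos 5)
Your proposal is correct and takes essentially the same route as the paper: the identical construction (a hub vertex joined by a single edge to each of $k$ disjoint copies of $C_6$, so $n=6k+1$), the same upper bound of three code-vertices per copy, and the same lower-bound argument that a copy contributing fewer than three code-vertices forces its attachment vertex to be dominated only by the hub, which can occur for at most one copy, yielding $|D|\geq 3k$. The only slip is cosmetic: the relevant picture is Figure~\ref{fig:C_6's}, not Figure~\ref{fig:P11's-LD}.
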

\begin{proof}
Consider the graph $G$ obtained from one vertex $x$ and $k\geq 2$ disjoint copies of $C_6$, each joined to $x$ by exactly one edge. We have $n=6k+1$, and we claim that $\LD(G)=3k$. It is easy to check that a set consisting of three vertices in each copy of $C_6$ (see Figure~\ref{fig:C_6's}) is locating-dominating. For the lower bound, assume that $D$ is an optimal locating-dominating set, and that $x\notin D$. Then, each copy of $C_6$ contains at least $\LD(C_6)=3$ vertices of $D$, and we are done. Hence, assume that $x\in D$. Each copy of $C_6$ has at least two vertices from $D$ (otherwise $D$ is not dominating). Assume some copy contains exactly two ($y,z$): then the neighbour of $x$ in that copy must be only dominated by $x$. Indeed, if this is not the case (say he is dominated also by $y$), there would be two vertices in this copy of $C_6$ that are not in $D$ but only dominated by $z$, a contradiction. But now observe that in the whole graph, at most one vertex of $V(G)\setminus D$ can be dominated only by $x$, hence all other copies of $C_6$ contain three vertices of $D$, and we are done.
\end{proof}

\begin{figure}[!ht]
\begin{center}
\scalebox{0.9}{\begin{tikzpicture}[join=bevel,inner sep=0.5mm]
\node[graphnode](x) at (1.5,2) {};
\draw (x) node[above=0.15cm] {$x$};
\node[graphnode](8) at (0:1) {};
\path (8)+(3,0) node[graphnode] (3) {};
\node[graphnode,fill](5) at (60:1) {};
\path (5)+(3,0) node[graphnode] (4) {};
\node[graphnode](7) at (120:1) {};
\path (7)+(3,0) node[graphnode,fill] (0) {};
\node[graphnode,fill](9) at (180:1) {};
\path (9)+(3,0) node[graphnode] (1) {};
\node[graphnode,fill](6) at (240:1) {};
\path (6)+(3,0) node[graphnode,fill] (2) {};
\node[graphnode](a) at (300:1) {};
\path (a)+(3,0) node[graphnode,fill] (b) {};

\path (1.5,0.5) node  {$\ldots$};

\draw[-] (8)--(5)--(7)--(9)--(6)--(a)--(8);
\draw[-] (3)--(4)--(0)--(1)--(2)--(b)--(3) (5)--(x)--(0);

\end{tikzpicture}}
\end{center}
\caption{\label{fig:C_6's} A family of connected graphs with location-domination number $\frac{n-1}{2}$.}
\end{figure}
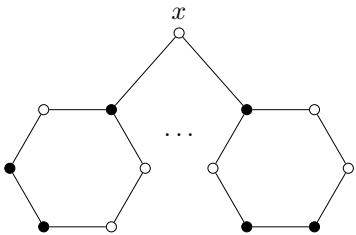

We will use the following lemma about the graph $P_{11}$.

\begin{lemma}\label{lemm:P11-LD}
Let $G$ be a graph of girth~5 containing a copy $P$ of $P_{11}$ as an induced subgraph, such that in $P$, only vertex $x$ has neighbours out of $P$. Let $D$ be a locating-dominating set of $G$. Then, we have $|D\cap V(P)|\geq 4$.
\end{lemma}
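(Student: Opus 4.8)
The plan is to show that any locating-dominating set $D$ of $G$ must place at least four vertices inside the copy $P$ of $P_{11}$. Since only vertex $x$ of $P$ has neighbours outside $P$, every vertex of $P$ other than $x$ must be dominated and located using only vertices of $D\cap V(P)$ together with possibly $x$ itself (if $x\in D$). My approach is to argue by contradiction: suppose $|D\cap V(P)|\leq 3$, and derive that either some vertex of $P\setminus\{x\}$ is not dominated, or two such vertices receive the same (singleton or empty) trace of $D$, violating the locating-dominating condition guaranteed by Lemma~\ref{lemma:girth5-LD}.

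First I would recall the structure of $P_{11}$: it is the Petersen graph $P_{10}$ with the edge $\{0,1\}$ subdivided by the new degree-$2$ vertex $x$. All vertices of $P_{11}$ except $x$ have degree~$3$, and $x$ has degree~$2$ inside $P$ (plus its external neighbours). Crucially, $P_{10}$ is vertex-transitive, $3$-regular, and has girth~$5$, so its closed neighbourhoods are quite ``spread out''. I would first establish the baseline fact that $\LD(P_{10})=4$ and more importantly that dominating $P_{10}$ already forces several code vertices: a set of size~$3$ cannot even dominate all ten vertices of $P_{10}$, since three closed neighbourhoods in a cubic graph cover at most $3\cdot 4=12$ vertices, and in $P_{10}$ (girth~$5$) no overlaps among three closed neighbourhoods can be fully efficient — a careful count shows three closed neighbourhoods cover at most~$10$ vertices only in degenerate perfect-domination configurations, which $P_{10}$ does not admit (it has no efficient dominating set of size~$2$, and even a dominating set of size~$3$ would require very tight packing). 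This domination obstruction is the cleanest route to ruling out $|D\cap V(P)|\leq 2$ outright, even accounting for help from $x$.

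The remaining and more delicate case is $|D\cap V(P)|=3$, which I would handle via the separation (location) requirement rather than domination alone. Here the key observation is that in $P_{11}$, the vertices of $V(P)\setminus\{x\}$ form (essentially) a copy of $P_{10}$ with one edge replaced by a path through $x$, and the external neighbours of $P$ can only assist in dominating/separating $x$ — they cannot help dominate or separate any of the nine Petersen-vertices. So those nine vertices must be located by $D\cap V(P)$ together with the single vertex $x$ (if $x\in D$). I would split into the subcases $x\in D$ and $x\notin D$: when $x\in D$, only two code vertices remain to dominate and locate the nine degree-$3$ vertices among themselves, which is readily seen to be impossible by a counting argument on the number of distinct nonempty traces $N(v)\cap D$ achievable; when $x\notin D$, all three code vertices lie among the nine Petersen-vertices, and I would use the girth-$5$ property (no two vertices share two common neighbours, and Lemma~\ref{lemma:girth5-LD}'s criterion) to show that too many vertices are left $1$-dominated with coinciding or empty traces.

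The main obstacle I anticipate is the $|D\cap V(P)|=3$ case with $x\notin D$, because the three code vertices can be distributed among the nine degree-$3$ vertices in many ways, and one must rule out every configuration. Rather than a brute-force check, I would exploit the symmetry of $P_{10}$ (vertex-transitivity and its large automorphism group) to reduce to a small number of orbit-representative placements of the three code vertices, and then for each representative invoke a clean counting argument: three vertices of degree~$3$ in a girth-$5$ graph dominate at most~$12$ vertices, leaving at least $9+1-12<0$ slack only if domination is perfectly tight, but tightness forces each non-code vertex to be $1$-dominated, and then Lemma~\ref{lemma:girth5-LD} demands an injection from the six (or more) $1$-dominated vertices into the three code vertices — impossible since $6>3$. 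Making the ``at least six vertices are $1$-dominated'' step precise, using that girth~$5$ forbids $4$-cycles and hence limits $2$-domination, is where the real work lies, but it should follow from the same packing estimates used throughout the paper.
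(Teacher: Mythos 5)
Your overall skeleton is the right one (restrict attention to $P$, split on whether $x\in D$, and close each case by counting), but as written the proposal contains a concretely false statement and leaves its decisive step unproven. The false statement is your ``baseline fact'' that three closed neighbourhoods cannot dominate $P_{10}$: the Petersen graph has domination number exactly~$3$ — in the paper's labelling, $\{0,6,8\}$ is a dominating set, since $N[0]\cup N[6]\cup N[8]=V(P_{10})$ — so your claimed ``careful count'' showing that size-$3$ domination requires configurations $P_{10}$ does not admit cannot exist. (Ruling out $|D\cap V(P)|\leq 2$ never needed this anyway: external vertices see only $x$, so the ten vertices of $P\setminus\{x\}$ must be dominated by $D\cap V(P)$, and two closed neighbourhoods cover at most $8<10$ of them.) There are further slips: $V(P)\setminus\{x\}$ has ten Petersen vertices, not nine; the inequality ``$9+1-12<0$'' does not parse into an argument; and the claim that ``at least six'' vertices are $1$-dominated — which you yourself flag as where the real work lies — is never established, so the case $|D\cap V(P)|=3$, $x\notin D$, is not actually closed. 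Your proposed fallback of an orbit-by-orbit analysis over placements of three code vertices would work but is unnecessary.

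The good news is that both size-$3$ subcases do fall to clean pigeonhole counts in the spirit you sketch, with no symmetry reduction needed. If $x\in D$, the $8$ non-code vertices of $P$ all have their neighbourhoods inside $P$, so their traces are nonempty, pairwise distinct subsets of the $3$-set $D\cap V(P)$ — but only $7$ nonempty subsets exist, a contradiction (girth is not even needed here). If $x\notin D$, the $7$ non-code Petersen vertices would need all $7$ nonempty subsets of $D\cap V(P)$ as traces, of total size $1+1+1+2+2+2+3=12$, while the three code vertices, each of degree~$3$ in $P$, supply at most $9$ edge endpoints — again a contradiction; this also subsumes your ``many $1$-dominated vertices versus an injection into three code vertices'' idea via Lemma~\ref{lemma:girth5-LD}. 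For comparison, the paper's proof is shorter still: when $x\notin D$ it observes that $D\cap V(P)$ must be a locating-dominating set of the $10$-vertex graph $P-x$ with $\Delta=3$ and invokes the general bound of Theorem~\ref{th:LB-Delta}, giving $\LD\geq\frac{20}{6}>3$; when $x\in D$ it uses a pure domination count (two further closed neighbourhoods together with $\{x,0,1\}$ cannot cover $V(P)$, because non-adjacent vertices of the Petersen graph always share a common neighbour). So your route is salvageable and essentially self-contained where the paper quotes a black-box bound, but the submitted version has a genuine gap: a false auxiliary claim and an unverified key count.
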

\begin{proof}
By contradiction, we assume that $D_P=D\cap V(P)$ has size~$3$. If $x\notin D_P$, then $D_P$ must form a locating-dominating set of $P\setminus\{x\}$. By Theorem~\ref{th:LB-Delta}, $\LD(G)\geq\frac{20}{6}>3$, a contradiction. Hence, $x\in D_P$. But now it is not possible to even dominate the remaining vertices with just two vertices, a contradiction.
\end{proof}

\begin{proposition}\label{prop:example-LD-conn-4/11}
There are infinitely many connected graphs $G$ of order $n$, girth~$5$ and minimum degree~$3$ with $\LD(G)=\frac{4}{11}(n-1)>0.363n$.
\end{proposition}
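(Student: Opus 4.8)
The plan is to mimic the construction of Proposition~\ref{prop:example-LD-n/2}, replacing the copies of $C_6$ by copies of the Petersen-based gadget $P_{11}$, and to use Lemma~\ref{lemm:P11-LD} as the main lower-bound tool. Concretely, I would take the graph $G_{11}^k$ of Definition~\ref{def:G_11^k}: a central vertex $y$ joined to $k\geq 2$ copies of $P_{11}$, each attached via its vertex $x$. First I would verify the basic parameters. Each copy of $P_{11}$ has $11$ vertices, so $n=11k+1$ and hence $k=\frac{n-1}{11}$. One checks that $G_{11}^k$ has girth~$5$ (the Petersen graph has girth~$5$, subdividing the edge $\{0,1\}$ to obtain $P_{11}$ only lengthens that cycle, and the attachments through $y$ create no short cycles since any cycle through $y$ must traverse two different copies and is therefore long) and minimum degree~$3$ (every original Petersen vertex keeps degree~$3$; the subdivision vertex $x$ has degree~$3$ once the edge $y-x$ is added; and $y$ has degree $k\geq 2$, which we must ensure is at least~$3$, so I would state the result for $k\geq 3$, or note that degree~$2$ at $y$ is acceptable if we only require $\delta=2$—since the claim says minimum degree~$3$, I take $k\geq 3$).

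Next I would establish the upper bound $\LD(G_{11}^k)\leq 4k$. The natural candidate is to place, in each copy of $P_{11}$, a locating-dominating set of size~$4$ that includes the attachment vertex $x$ (so that $y$ is dominated, and in fact $2$-dominated since every copy contributes a vertex dominating $y$). I would exhibit such a $4$-set explicitly—for instance the set marked in Figure~\ref{fig:P11's-LD}—and check via Lemma~\ref{lemma:girth5-LD} that it locates and dominates each copy, and that the vertex $y$ together with the at most $k$ copy-attachment vertices causes no conflicts (since $y$ is $2$-dominated it need not be separated by the injective-function argument). This gives $|D|\leq 4k$, so $\LD(G_{11}^k)\leq 4k=\frac{4}{11}(n-1)$.

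For the matching lower bound, let $D$ be any locating-dominating set. Here is where Lemma~\ref{lemm:P11-LD} does the work: each copy of $P_{11}$ is an induced subgraph in which only $x$ has neighbours outside the copy, so the lemma gives $|D\cap V(P)|\geq 4$ for each of the $k$ copies, immediately yielding $|D|\geq 4k$. Combined with the upper bound this pins down $\LD(G_{11}^k)=4k=\frac{4}{11}(n-1)>0.363\,n$, and since $k$ ranges over all integers $\geq 3$ we obtain infinitely many such graphs.

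The main obstacle I anticipate is not the lower bound (which is essentially immediate from Lemma~\ref{lemm:P11-LD}) but the careful verification of the upper bound and of the structural hypotheses of that lemma. Specifically, I must confirm that the chosen $4$-vertex set in each copy really is locating-dominating \emph{within the whole graph}, i.e.\ that attaching $y$ and identifying the copies through their $x$-vertices does not create an unseparated pair straddling two copies or involving $y$; this reduces to checking that $y$ is $2$-dominated and that no vertex outside $D$ is dominated solely by $x$ in a way that clashes across copies. I would handle this by invoking Lemma~\ref{lemma:girth5-LD}: it suffices to exhibit the injective function $f$ on the $1$-dominated vertices, and one verifies that each copy's contribution to $X$ is handled internally by its own code-vertices, so no cross-copy collision can occur.
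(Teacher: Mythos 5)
Your proposal is correct and follows essentially the same route as the paper: the paper's proof likewise takes $G_{11}^k$ with $k\geq 3$ (so $n=11k+1$), exhibits the explicit locating-dominating set $\{x,3,6,9\}$ in each copy of $P_{11}$ for the upper bound of $4k$, and obtains the matching lower bound immediately from Lemma~\ref{lemm:P11-LD}. Your additional care about the degree of $y$ forcing $k\geq 3$ and about verifying the code within the whole graph via Lemma~\ref{lemma:girth5-LD} is exactly the right (if in the paper left implicit) bookkeeping.
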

\begin{proof}
Consider the graph $G_{11}^k$ ($k\geq 3$) from Definition~\ref{def:G_11^k}, which has $n=11k+1$ vertices. A locating-dominating set of size~$4k$ is given by selecting vertices $\{x,3,6,9\}$ of each copy of $P_{11}$ (see Figure~\ref{fig:P11's-LD}). By Lemma~\ref{lemm:P11-LD}, this is optimal.
\end{proof}

The Heawood graph $H_{14}$ is a well-known Hamiltonian cubic vertex-transitive graph on $14$~vertices and with girth~$6$. Given its vertex set $\{0,1, \ldots, 13\}$, its edges are given by a Hamiltonian cycle $0-1-2-\ldots-13$ and $\{0,5\}$, $\{1,10\}$, $\{2,7\}$, $\{3,12\}$, $\{4,9\}$, $\{6,11\}$ and $\{8,13\}$. See Figure~\ref{fig:heawood} for an illustration.

\begin{figure}[!ht]
\begin{center}
\scalebox{0.9}{\begin{tikzpicture}[join=bevel,inner sep=0.5mm]
\node[graphnode](0) at (26:2) {};
\draw (0) node[right=0.1cm] {$0$};
\node[graphnode,fill](1) at (52:2) {};
\draw (1) node[above right=0.1cm] {$1$};
\node[graphnode](2) at (78:2) {};
\draw (2) node[above=0.1cm] {$2$};
\node[graphnode](3) at (104:2) {};
\draw (3) node[above=0.1cm] {$3$};
\node[graphnode,fill](4) at (129:2) {};
\draw (4) node[above left=0.1cm] {$4$};
\node[graphnode](5) at (155:2) {};
\draw (5) node[left=0.1cm] {$5$};
\node[graphnode,fill](6) at (181:2) {};
\draw (6) node[left=0.1cm] {$6$};
\node[graphnode](7) at (206:2) {};
\draw (7) node[left=0.1cm] {$7$};
\node[graphnode,fill](8) at (232:2) {};
\draw (8) node[below left=0.1cm] {$8$};
\node[graphnode](9) at (258:2) {};
\draw (9) node[below=0.1cm] {$9$};
\node[graphnode,fill](10) at (283:2) {};
\draw (10) node[below=0.1cm] {$10$};
\node[graphnode](11) at (309:2) {};
\draw (11) node[below right=0.1cm] {$11$};
\node[graphnode](12) at (335:2) {};
\draw (12) node[below right=0.1cm] {$12$};
\node[graphnode,fill](13) at (361:2) {};
\draw (13) node[right=0.1cm] {$13$};

\draw[-] (0)--(1)--(2)--(3)--(4)--(5)--(6)--(7)--(8)--(9)--(10)--(11)--(12)--(13)--(0);
\draw[-] (0)--(5) (1)--(10) (2)--(7) (3)--(12) (4)--(9) (6)--(11) (8)--(13);
\end{tikzpicture}}
\end{center}
\caption{\label{fig:heawood} The Heawood graph with a minimum locating-dominating set (black vertices).}
\end{figure}
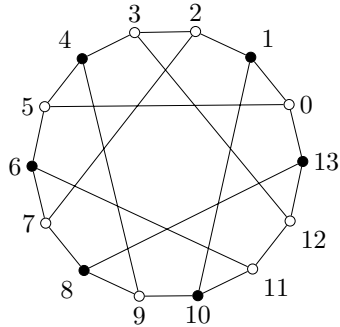

\begin{proposition}\label{prop:heawood-LD}
The Heawood graph $H_{14}$ has $\LD(H_{14})=6=\frac{3}{7}n>0.428n$.
\end{proposition}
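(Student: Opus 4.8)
The plan is to establish the claim $\LD(H_{14})=6$ by proving matching upper and lower bounds. The upper bound is the easy half: I would exhibit an explicit locating-dominating set of size~$6$, namely the six black vertices $\{1,4,6,8,10,13\}$ shown in Figure~\ref{fig:heawood}, and verify directly that it is both dominating and locating. Since $H_{14}$ is vertex-transitive and highly symmetric, this verification is a finite check: one confirms every vertex is dominated (has a closed-neighbourhood intersection with the code that is nonempty) and that every pair of vertices outside the code receives a distinct nonempty trace on the code. Given girth~$6$ (hence girth at least~$5$), Lemma~\ref{lemma:girth5-LD} streamlines this: it suffices to check that the set of vertices dominated by exactly one code-vertex admits an injective assignment into the code via a private neighbour, so I would only need to inspect the $1$-dominated vertices rather than all pairs.

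For the lower bound $\LD(H_{14})\geq 6$, I would first apply the generic degree bound of Theorem~\ref{th:LB-Delta}: with $n=14$ and maximum degree $\Delta=3$, it gives $\LD(H_{14})\geq \frac{2\cdot 14}{3+3}=\frac{28}{6}>4$, so $\LD(H_{14})\geq 5$. This leaves only the task of ruling out a locating-dominating set of size exactly~$5$. Here I expect the main obstacle to lie: the generic bound is not tight enough, so I must argue that no $5$-element set can simultaneously dominate all $14$ vertices and separate every pair outside it. The cleanest route is a counting argument in the spirit of Theorem~\ref{th:LB-Delta}. A set $D$ of size~$5$ dominates at most $5\cdot 4=20$ vertex-slots (each code-vertex covers itself and three neighbours); the $14-5=9$ vertices outside $D$ must each be dominated, and moreover must receive pairwise-distinct nonempty subsets of $D$. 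Since a vertex outside $D$ has degree~$3$, its trace $N(x)\cap D$ is a nonempty subset of $D$ of size at most~$3$; but there are only $\binom{5}{1}=5$ singletons available, and in a triangle-free (indeed girth-$6$) graph two vertices cannot share two common neighbours, which sharply limits how many of the $9$ outside-vertices can have the same trace.

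Concretely, I would count as follows. Let $t$ be the number of outside-vertices with a singleton trace (the $1$-dominated ones). By Lemma~\ref{lemma:girth5-LD} these must inject into $D$ via distinct private neighbours, so $t\leq |D|=5$; in fact the injection forces distinct images, so at most~$5$ such vertices exist, but each code-vertex has only~$3$ neighbours and must also serve domination elsewhere, tightening this. The remaining $9-t$ outside-vertices are $2$-dominated, each consuming at least two edges into $D$. Counting edges from $D$ to its outside: $D$ sends at most $5\cdot 3=15$ edges outward (fewer if $G[D]$ has edges), while the outside-vertices demand at least $t\cdot 1+(9-t)\cdot 2=18-t$ incident code-edges. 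Thus $18-t\leq 15$, forcing $t\geq 3$, yet the injectivity and girth constraints cap the usable singleton-traces below this threshold once one accounts for edges internal to $G[D]$. Pushing this edge-count together with the girth-$6$ condition (no two outside-vertices share two neighbours, and $G[D]$ contains no short cycles) should produce a numerical contradiction, establishing $\LD(H_{14})\geq 6$ and completing the proof. The delicate point is organizing the case analysis on the structure of $G[D]$ so that the counting is airtight; the vertex-transitivity of $H_{14}$ can be invoked to reduce the number of cases by fixing one code-vertex up to symmetry.
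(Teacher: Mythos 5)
Your upper bound is fine and matches the paper: the set $\{1,4,6,8,10,13\}$ is indeed a locating-dominating set of size~$6$, and using Lemma~\ref{lemma:girth5-LD} to reduce the verification to the $1$-dominated vertices is legitimate. Your opening counting step for the lower bound is also essentially the paper's: writing $m(D,S)=15-2m(D)$ and demanding $m(D,S)\geq t+2(9-t)=18-t$ with $t\leq 5$ yields $m(D)\leq 1$ (equivalently your $t\geq 3+2m(D)$), exactly the inequality the paper derives.

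The genuine gap is what comes next. You assert that pushing the edge count together with the girth condition ``should produce a numerical contradiction,'' but no such contradiction exists: the arithmetic is perfectly consistent. With $m(D)=0$ and $t=3$ the demand is $3+2\cdot 6=15$, matching the supply of $15$ outward edges exactly, and your claimed ``cap'' on singleton traces below $3$ is false --- the injection of Lemma~\ref{lemma:girth5-LD} allows up to $5$ singleton traces, and a code vertex serving as the unique dominator of one outside vertex is free to dominate others (which are then $2$-dominated elsewhere). Girth~$6$ only forbids two outside vertices sharing two common code neighbours; it does not tighten the count to a contradiction. This is precisely why the paper, after deriving $m(D)\leq 1$, spends the bulk of the proof on a structural case analysis that uses the \emph{specific adjacencies} of $H_{14}$: by vertex-transitivity it fixes a vertex $0\in D$ with all neighbours outside $D$, branches on whether some neighbour of $0$ is $1$-dominated or all are $2$-dominated (using the edge-automorphisms at $0$ to normalize which one), and then traces through the concrete edges $\{1,10\},\{2,7\},\{3,12\},\{4,9\},\{6,11\},\{8,13\}$ to eliminate every candidate $D$. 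You correctly identify that ``organizing the case analysis'' is the delicate point, but you do not carry it out, and your suggestion that it can be replaced by counting plus girth alone would fail. As it stands your proposal proves only $\LD(H_{14})\geq 5$; the step from $5$ to $6$ --- the actual content of the proposition --- is missing.
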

\begin{proof}
A locating-dominating set of size~$6$ is for example $\{1,4,6,8,10,13\}$. 

We now prove that no locating-dominating set of size~$5$ exists. Assume by contradiction that there is a locating-dominating set $D$ of $H_{14}$ of size~$5$. Let $m(D)$ and $m(D,S)$ count the number of edges between vertices of $D$ and the edges between $D$ and $S=V(H_{14})\setminus D$, respectively. Since at most $|D|$ vertices from $S$ can be dominated by a single vertex of $D$, we have $m(D,S)\geq |D|+2(|S|-|D|)=13$. On the other hand, since $H_{14}$ is cubic, $m(D,S)=15-2m(D)$. Hence, we have $m(D)\leq 1$.

Therefore, we have at least three vertices in $D$ that are adjacent only to vertices of $S$. Since $H_{14}$ is vertex-transitive, we assume without loss of generality that vertex $0$ is such a vertex. Among the neighbours of $0$ (vertices $1,5,13$), at most one is dominated only by $0$.

Assume that one of them is in that case. By the symmetries of the graph, there are automorphisms pairwise exchanging edges $\{0,1\},\{0,5\},\{0,13\}$. Hence, without loss of generality, we can assume that vertex $5$ is $1$-dominated, but vertices $1,13$ are $2$-dominated. Hence, vertices $4,6\notin D$ but at least one vertex among $2,10$ and $8,12$ belongs to $D$, respectively. Moreover, in order to dominate vertices $4$ and $6$, one of $3,9$ and $7,11$ belongs to $D$, respectively. Since these four sets are disjoint and $|D|=5$, $D$ contains \emph{exactly} one of each.

We first assume that $2\in D$: hence
$10\notin D$. If also $7\in D$ (and
$11\notin D$), both $9,12\in D$ in order to dominate
10 and 11, respectively. Then $D=\{0,2,7,9,12\}$ but $4,10$
are both dominated only by 9, a contradiction. Hence,
$7\notin D$ and $11\in D$. Then, $9\in D$
in order to separate $6,10$; then, $3\notin D$ and
$12\in D$, otherwise $6,12$ are not separated. Hence
$D=\{0,2,9,11,12\}$ but $4,8$ are both dominated only by 9,
a contradiction.

Hence, $2\notin D$ and $10\in D$. If $3\notin D$, then $9\in D$ and moreover $12\in D$ in order to dominate $3$ (hence $8\notin D$). Since $7$ is dominated, $7$ is the last vertex of $D$. But then $2,6$ are both dominated only by $7$, a contradiction. Hence, $3\in D$ and $9\notin D$. To separate $2,4$, $7\in D$ (hence $11\notin D$). Then $8$ is the last vertex of $D$, otherwise it would not be separated by $6$. But then $4,12$ are not separated, a contradiction.

Therefore, we can assume that all neighbours of $0$ are $2$-dominated. Hence, at least one vertex among $\{2,10\}$, $\{4,6\}$ and $\{8,12\}$, respectively, belongs to $D$. Assume first that $2\in D$. Then, in order for $10$ to be dominated, one of $9,10,11$ belongs to $D$. Then, exactly one of $8,12$ belongs to $D$. If $10\in D$, one of $8,12$ would not be dominated, a contradiction. If $9\in D$, then $12\in D$ (otherwise it is not dominated). But then, both $8,10$ are only dominated by $9$, a contradiction. A similar contradiction follows if $11\in D$.

Hence, $2\notin D$, and $10\in D$. Then, (exactly) one of $3,7$ belongs to $D$, otherwise $2$ is not dominated. If $3\in D$ and $7\notin D$, $8\in D$ (otherwise $8$ is not dominated). Since $6$ must be dominated, $6$ itself is the last vertex of $D$; but then, $4,12$ are both only dominated by $3$, a contradiction. Hence, If $7\in D$ and $3\notin D$. Then, $12\in D$ (otherwise it is not dominated). Hence, $8\notin D$. But now, both $2,8$ are only dominated by $7$, a contradiction.

Therefore, $D$ does not exist, which completes the proof.
\end{proof}

\subsection{Identifying codes}

We now give constructions with large identifying code number. We start with a construction based on the 5-cycle $C_5$, which has identifying code number~$3$~\cite{BCHL04}.

\begin{proposition}\label{prop:example-ID-3n/5}
There are infinitely many connected graphs $G$ of order $n$, girth~$5$ and minimum degree~$2$ with $\M(G)=\frac{3}{5}(n-1)$.
\end{proposition}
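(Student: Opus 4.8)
The plan is to mirror the construction of Proposition~\ref{prop:example-LD-n/2}, but with the copies of $C_6$ replaced by copies of the $5$-cycle. Concretely, I would take a central vertex $x$ together with $k\geq 2$ vertex-disjoint copies of $C_5$, and join each copy to $x$ by a single edge, labelling the $i$-th copy $v^i_1-v^i_2-v^i_3-v^i_4-v^i_5-v^i_1$ so that $v^i_1$ is the vertex adjacent to $x$. This graph $G$ has order $n=5k+1$, is connected, and has minimum degree~$2$ (attained by each $v^i_2,\dots,v^i_5$, while $v^i_1$ has degree~$3$ and $x$ has degree $k\geq 2$). Since every copy is attached through exactly one edge, no cycle can pass through $x$, so the girth stays equal to~$5$. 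Letting $k$ range over all integers $\geq 2$ produces infinitely many such graphs, and it then remains to prove $\M(G)=3k=\frac{3}{5}(n-1)$.

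For the upper bound I would exhibit the explicit code $C=\bigcup_{i}\{v^i_1,v^i_2,v^i_3\}$ of size $3k$, with $x\notin C$. Inside a fixed copy this is just the identifying code $\{v_1,v_2,v_3\}$ of $C_5$, so its five vertices get the five distinct non-empty traces $\{v^i_1,v^i_2\}$, $\{v^i_1,v^i_2,v^i_3\}$, $\{v^i_2,v^i_3\}$, $\{v^i_3\}$, $\{v^i_1\}$ on $C$. Vertices from different copies have traces supported on disjoint vertex sets, so all cross-copy pairs are separated for free. Finally $x$ is dominated (each $v^i_1\in C$), and its trace $\{v^1_1,\dots,v^k_1\}$ meets at least two copies, hence differs from every copy-vertex trace, which lies inside a single copy. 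This verifies that $C$ is an identifying code and gives $\M(G)\leq 3k$.

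The heart of the argument is the matching lower bound, which I would obtain by an elementary counting observation applied copy by copy. Let $C$ be any identifying code and set $C_i=C\cap V(\text{copy }i)$. The four vertices $v^i_2,v^i_3,v^i_4,v^i_5$ are \emph{not} adjacent to $x$, so their closed neighbourhoods avoid $x$ and each trace $N[v^i_a]\cap C$ equals $N[v^i_a]\cap C_i$, a subset of $C_i$. These four traces must be non-empty (by domination) and pairwise distinct (by separation), so $C_i$ admits at least four distinct non-empty subsets as traces; hence $2^{|C_i|}-1\geq 4$, forcing $|C_i|\geq 3$. Crucially this holds whether or not $x\in C$, since $x$ can only assist the single vertex $v^i_1$. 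Summing over the $k$ pairwise-disjoint copies yields $|C|\geq\sum_i|C_i|\geq 3k$, so $\M(G)\geq 3k$.

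Combining the two bounds gives $\M(G)=3k=\frac{3}{5}(n-1)$, as claimed. The only step requiring genuine care is the lower bound: one must notice that the four ``inner'' vertices of each copy are shielded from the central vertex, so that the identifying-code constraints on them are confined to $C_i$ and the count $2^{|C_i|}-1\geq 4$ applies \emph{uniformly}, independently of the status of $x$. This is what prevents the central vertex from ever saving more than the single vertex $v^i_1$ in any copy and thereby secures the clean per-copy bound of~$3$.
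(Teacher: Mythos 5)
Your proposal is correct and takes essentially the same approach as the paper: the identical construction (a central vertex $x$ joined by single edges to $k\geq 2$ disjoint copies of $C_5$), an explicit code of three consecutive vertices per copy for the upper bound, and a per-copy lower bound of three code vertices. The only divergence is cosmetic: where the paper rules out copies with at most two code vertices via a short case analysis (two code vertices must be non-adjacent for domination, and then a separation failure arises), you count the four pairwise distinct, non-empty traces of the inner vertices $v^i_2,\dots,v^i_5$ inside $C_i$ and conclude $2^{|C_i|}-1\geq 4$, hence $|C_i|\geq 3$ --- both arguments resting on the same key observation that these inner vertices are shielded from $x$.
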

\begin{proof}
Consider a vertex $x$ attached to $k\geq 2$ copies of $C_5$ via one of each copy's vertex (Figure~\ref{fig:C_5's}). The set formed by three consecutive vertices of each copy of $C_5$ (centered in the neighbour of $x$) is clearly an identifying code. For the lower bound, assume that some copy contains at most two vertices of an identifying code $C$. Then they must be non-adjacent (otherwise some vertex is not dominated). But then at least one of these two vertices is not separated from one of its neighbours, a contradiction. Hence each copy of $C_5$ contains at least three vertices of $C$, proving the bound.
\end{proof}

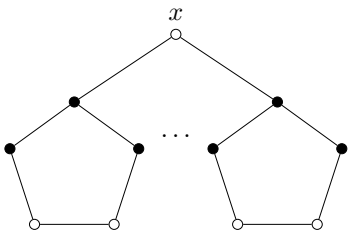
\begin{figure}[!ht]
\begin{center}
\scalebox{0.9}{\begin{tikzpicture}[join=bevel,inner sep=0.5mm]
\node[graphnode](x) at (1.5,2) {};
\draw (x) node[above=0.15cm] {$x$};
\node[graphnode,fill](8) at (18:1) {};
\path (8)+(3,0) node[graphnode,fill] (3) {};
\node[graphnode,fill](5) at (90:1) {};
\path (5)+(3,0) node[graphnode,fill] (4) {};
\node[graphnode,fill](7) at (162:1) {};
\path (7)+(3,0) node[graphnode,fill] (0) {};
\node[graphnode](9) at (234:1) {};
\path (9)+(3,0) node[graphnode] (1) {};
\node[graphnode](6) at (306:1) {};
\path (6)+(3,0) node[graphnode] (2) {};

\path (1.5,0.5) node  {$\ldots$};

\draw[-] (8)--(5)--(7)--(9)--(6)--(8);
\draw[-] (3)--(4)--(0)--(1)--(2)--(3) (5)--(x)--(4);

\end{tikzpicture}}
\end{center}
\caption{\label{fig:C_5's} A family of connected graphs with identifying code number $\frac{3}{5}(n-1)$.}
\end{figure}

The following lemma is about the graph $P_{11}$.

\begin{lemma}\label{lemm:P11}
Let $G$ be an identifiable graph of girth~$5$ containing a copy $P$ of $P_{11}$ as an induced subgraph, such that in $P$, only vertex $x$ has neighbours out of $P$. Let $C$ be an identifying code of $G$ and $C\cap V(P)=C_P$. Then:\begin{itemize}
\item[(i)] $|C_P|\geq 4$;
\item[(ii)] if $|C_P|=4$, then $x$ is \emph{only} dominated by a vertex $y\notin V(P)$;
\end{itemize}
\end{lemma}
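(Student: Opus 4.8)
The plan is to prove the two statements about the graph $P_{11}$ directly, exploiting the rigidity of the Petersen graph. The graph $P_{11}$ consists of the Petersen graph $P_{10}$ with one edge subdivided, and the key structural fact I will use is that $P_{10}$ is vertex-transitive with girth $5$, so its vertices are highly symmetric and any small dominating/separating set is very constrained. Throughout, $P$ denotes the copy of $P_{11}$ and $C_P = C \cap V(P)$; note that all vertices of $P$ except $x$ have all their neighbours inside $P$, so for $v \neq x$, whether $v$ is dominated or separated from other vertices of $P$ depends only on $C_P$ (together with the single external neighbour of $x$ that may dominate $x$).

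For part~(i), I would argue by contradiction assuming $|C_P| \leq 3$. The main tool is Theorem~\ref{th:LB-Delta}: since $P_{10}$ is cubic ($\Delta = 3$) of order $10$, any identifying code of $P_{10}$ (or of the relevant induced subgraph) must have size at least $\frac{2 \cdot 10}{3+2} = 4$. The subtlety is that $C_P$ need not identify all of $P$ by itself, because $x$ can be dominated/separated by its external neighbour $y$. So I would split on whether $x \in C_P$. If $x \notin C_P$, then $C_P \subseteq V(P_{10})$ must separate and dominate all the ten original Petersen vertices using only three vertices, and moreover the two subdivision-neighbours of $x$ (vertices $0$ and $1$ in the notation of Definition~\ref{def:petersens}) must be dominated and pairwise separated from everything; a direct degree/counting check (there are $11$ vertices, at most $|C_P|=3$ of which are dominated once, the rest needing $\geq 2$ code-neighbours) shows $3$ vertices cannot even dominate $P$. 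If $x \in C_P$, then only two code-vertices remain to dominate the nine vertices of $P_{10}$, which is impossible since a single vertex dominates at most four vertices (itself plus three neighbours), so two vertices dominate at most eight, contradiction. Either way $|C_P| \geq 4$.

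For part~(ii), I assume $|C_P| = 4$ and must show $x$ is dominated \emph{only} from outside $P$, i.e. $x \notin C_P$ and no neighbour of $x$ in $P$ (vertices $0$ and $1$) lies in $C_P$, forcing $x$'s sole dominator to be the external vertex $y$. The approach is again a counting/case argument. First, by the same domination count as above, four vertices of $P$ dominate at most $4\cdot 4 = 16$ closed-neighbourhood slots but $P$ has $11$ vertices each needing domination and the non-code vertices needing $2$-domination or a distinct separating image; this forces $C_P$ to be extremely efficient, essentially an optimal identifying code of the Petersen structure, which by the last sentence of Theorem~\ref{th:LB-Delta} must be an independent $2$-dominating set with all code-vertices of degree $3$. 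Since $x$ has degree $2$ in $P$, an optimal configuration cannot place $x$ in $C_P$, nor can it afford to $2$-dominate $x$ from within $P$ while still identifying everything; I would verify that any attempt to dominate $x$ using $\{0,1\} \cap C_P$ breaks either the independence or the $2$-domination of some Petersen vertex, leaving $x$ undominated by $C_P$. Hence $x$ must be dominated by $y \notin V(P)$.

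The main obstacle I anticipate is the finite but somewhat delicate case analysis in part~(ii): ruling out every placement of four code-vertices that would dominate $x$ internally requires either a clean invocation of the structural characterization from Theorem~\ref{th:LB-Delta} (independent $2$-dominating set of degree-$\Delta$ vertices) or an explicit but tedious enumeration using the automorphisms of $P_{10}$. The cleanest route is to lean hard on that characterization to immediately exclude $x \in C_P$ (degree $2 < 3$) and to show that $2$-domination of every Petersen vertex with only four independent code-vertices already pins down $C_P$ up to symmetry, at which point checking that $x$ receives no internal dominator is a short verification rather than a full search.
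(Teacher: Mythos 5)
Your handling of the case $x\notin C_P$ is sound and is in fact exactly the paper's route for part~(ii): there $C_P$ must be an identifying code of $P-x$ (the Petersen graph minus the edge $\{0,1\}$: order $10$, maximum degree $3$), so Theorem~\ref{th:LB-Delta} gives $|C_P|\ge 4$, and if $|C_P|=4$ the bound is attained, whence every code vertex has degree~$3$ in $P-x$; since $0$ and $1$ have degree~$2$ there, neither neighbour of $x$ lies in $C_P$. The genuine gap is the complementary case $x\in C_P$ of part~(ii). You propose to exclude it by the same characterization (``$x$ has degree $2<3$''), but the last sentence of Theorem~\ref{th:LB-Delta} applies only to a set that \emph{is} an identifying code of some graph and attains $\frac{2n}{\Delta+2}$ exactly; when $x\in C_P$ no such graph is available: $C_P\not\subseteq V(P-x)$, and $C_P$ need not be an identifying code of $P_{11}$ at all, since domination of $x$ and separation of pairs involving $x$ may be delegated to the external neighbour $y$ (indeed $\M(P_{11})\ge\lceil 22/5\rceil=5$, so no $4$-set is a code of $P_{11}$ --- the hypothesis fails essentially, not just formally). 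Your fallback count ($4\cdot 4=16$ slots against $11$ vertices) is too weak to rescue this case: with $x\in C_P$ the supply is $4+(3+3+3+2)=15$ closed-neighbourhood slots while the demand can be as low as $4+4+2\cdot 3=14$, so no contradiction results. What closes the case --- and what the paper does --- is a refined count that tracks the edges \emph{inside} $C_P$ jointly with Lemma~\ref{lemma:girth5-ID}: writing $i$ for the number of isolated vertices of $G[C_P]$, girth~$5$ and Lemma~\ref{lemma:girth5-ID}(i) force $m(C_P)=3-i$, hence $m(C_P,S)=11-2m(C_P)=5+2i$, while at most $4-i$ of the seven vertices of $S$ can be $1$-dominated (injectivity into $C_{\geq 3}$), so $m(C_P,S)\ge (4-i)+2(3+i)=10+i$, yielding the absurdity $i\ge 5$. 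Nothing in your sketch supplies this refinement or an equivalent enumeration, so part~(ii) is not proved.

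There is also an arithmetic slip in part~(i), case $x\in C_P$: $x$ dominates itself \emph{and both} its neighbours $0,1$, so eight vertices remain undominated, not nine, and two further code vertices supply exactly $2\cdot 4=8$ closed-neighbourhood slots --- your ``at most eight $<$ nine'' contradiction evaporates. The case still closes, but an actual verification is needed: an exact cover forces both remaining code vertices to lie outside $N[0]\cup N[1]$ (which contains $x$), i.e.\ in $\{3,5,6,8\}$ in the labelling of Definition~\ref{def:petersens}, and any two of the closed neighbourhoods $N[3],N[5],N[6],N[8]$ intersect, so $\{2,\ldots,9\}$ cannot be partitioned; alternatively, one can argue as the paper does, by classifying the possible shapes of $G[C_P]$ under Lemma~\ref{lemma:girth5-ID}(i) (a connected triple, a $K_2$ containing $x$ plus an isolated vertex, or three isolated vertices) and checking each fails domination or separation. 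Finally, a smaller inaccuracy in your $x\notin C_P$ subcase of~(i): ``three vertices cannot even dominate $P$'' is false as stated (the Petersen graph has domination number $3$); it is your parenthetical $2$-domination count that does the work (supply at most $9$ edges from $C_P$ to $S$, demand at least $3+2\cdot 4=11$ for the seven non-code vertices other than $x$), or, more simply, the same appeal to Theorem~\ref{th:LB-Delta} applied to $P-x$ as above.
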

\begin{proof}
(i) By contradiction, assume that $|C_P|=3$. If $C_P$ induces a connected graph, then one can check that there are some non-dominated vertices in $P$. Hence, by Lemma~\ref{lemma:girth5-ID}(i), either $C_P$ induces a $K_2$ containing $x$ and an isolated vertex, or three isolated vertices. In both cases some vertices of $P$ would not be separated, a contradiction.

(ii) Assume that $|C_P|=4$ and by contradiction, that $x$ is dominated by a vertex of $C_P$. If $x\notin C_P$, then $C_P$ must form an identifying code of $P\setminus\{x\}$. Then, the bound $\M(G)\geq\frac{2n}{\Delta+2}$ of Theorem~\ref{th:LB-Delta} is tight, and by the same theorem, all vertices in $C_P$ have degree~$3$ in $P\setminus\{x\}$. Hence the neighbours of $x$ do not belong to $C_P$, a contradiction. Hence, $x\in C_P$.

Let $m(C_P)$ and $m(C_P,S)$ count the number of edges between vertices of $C_P$ and edges between vertices of $C_P$ and $S=V(P)\setminus C_P$, respectively. Let $i$ denote the number of vertices in $C_P$ that are not adjacent to any other vertex of $C_P$ (note that $0\leq i\leq 2$ since $x\in C_P$ and $x$ is dominated by a vertex of $C_P$). Then, we have $m(C_P)=4-i-1$ (indeed $C_P$ must induce a forest). We also have $m(C_P,S)=11-2m(C_P)$ (since $x\in C_P$ and has degree~$2$ in $P$). We get that $m(C_P,S)=5+2i$. On the other hand, at most $4-i$ vertices in $S$ can be $1$-dominated, and the other ones must be at least $2$-dominated. Since $|S|=7$, we get $m(C_P,S)\geq 4-i + 2(7-(4-i))=10+i$. Putting both inequalities together, we get that $i\geq 5$, a contradiction.
\end{proof}

\begin{proposition}\label{prop:example-ID-5n/11}
There are infinitely many connected graphs $G$ of order $n$, girth~$5$ and minimum degree~$3$ with $\M(G)=\frac{5}{11}(n-1)>0.454n$.
\end{proposition}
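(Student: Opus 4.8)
The plan is to reuse the family $G_{11}^k$ of Definition~\ref{def:G_11^k}, now counting identifying codes instead of locating-dominating sets. Recall that $G_{11}^k$ is connected and has $n=11k+1$ vertices, and that it has girth~$5$: the Petersen graph has girth~$5$, subdividing a single edge only lengthens the cycles through it, and the hub $y$ creates no short cycle since two distinct copies meet only at $y$. For $k\geq 3$ its minimum degree is~$3$, because every vertex of a copy of $P_{11}$ has degree~$3$ except the apex $x$, which has degree~$2$ inside $P_{11}$ and receives its third edge from $y$, while $y$ itself has degree $k\geq 3$. I will prove $\M(G_{11}^k)=5k=\frac{5}{11}(n-1)$.

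For the upper bound I would exhibit an identifying code of size exactly $5k$, namely five suitable vertices in each copy of $P_{11}$ (the black vertices in Figure~\ref{fig:P11's-ID}), always including the apex $x$, with $y$ left out of the code. A direct check shows that, within one copy, these five vertices dominate and pairwise separate all eleven of its vertices; since $y\notin C$, the apex's code-neighbourhood inside $G_{11}^k$ coincides with its code-neighbourhood inside the copy, so nothing is spoiled by the extra edge to $y$. The hub $y$ is itself dominated because every apex lies in the code, and it is separated from all other vertices since $N[y]\cap C=\{x^{(1)},\dots,x^{(k)}\}$ has size $k\geq 3$, whereas any vertex confined to a single copy meets the code only within that copy and hence sees at most one apex. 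Finally, if $v,w$ lie in distinct copies then $N[v]\cap C$ and $N[w]\cap C$ are nonempty subsets of the code-parts of their respective (disjoint) copies, so they cannot coincide. This yields $\M(G_{11}^k)\leq 5k$.

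The core of the argument is the matching lower bound, and here I would lean on Lemma~\ref{lemm:P11}. Let $C$ be any identifying code and set $C_i=C\cap V(P^{(i)})$. By Lemma~\ref{lemm:P11}(i), $|C_i|\geq 4$ for each $i$. Call a copy \emph{deficient} if $|C_i|=4$. Since the only neighbour of the apex $x^{(i)}$ outside its own copy is the hub $y$, Lemma~\ref{lemm:P11}(ii) forces $x^{(i)}$ to be dominated solely by $y$ in a deficient copy; hence $y\in C$ and $N[x^{(i)}]\cap C=\{y\}$. Two deficient copies $i\neq j$ would then give $N[x^{(i)}]\cap C=\{y\}=N[x^{(j)}]\cap C$, so the distinct apices $x^{(i)},x^{(j)}$ would not be separated, contradicting that $C$ is an identifying code. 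Thus at most one copy is deficient.

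It remains to combine these observations. If no copy is deficient, then $|C|\geq\sum_{i=1}^{k}|C_i|\geq 5k$. If exactly one copy is deficient, the previous paragraph forces $y\in C$, and $y$ lies outside every copy, so $|C|\geq 1+\sum_{i=1}^{k}|C_i|\geq 1+4+5(k-1)=5k$. In either case $|C|\geq 5k$, matching the upper bound and giving $\M(G_{11}^k)=5k=\frac{5}{11}(n-1)$. The one subtle point—and the step I expect to require the most care—is precisely this charging argument: a copy may drop to four code-vertices only by paying for the hub vertex $y$, and since that payment can be collected at most once, the effective cost per copy stays at five rather than four.
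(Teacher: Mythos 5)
Your proof is correct and takes essentially the same approach as the paper: the same family $G_{11}^k$ with the code $\{x,2,4,7,9\}$ in each copy for the upper bound, and Lemma~\ref{lemm:P11} combined with the observation that two deficient copies would leave their apices unseparated (hence at most one deficient copy, which forces $y\in C$ and restores the count to $5k$) for the lower bound. Your additional verifications---girth, minimum degree requiring $k\geq 3$, and the explicit tally $1+4+5(k-1)=5k$---merely spell out details the paper leaves implicit.
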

\begin{proof}
Consider the graph $G_{11}^k$ from Definition~\ref{def:G_11^k}. An identifying code of size $5k$, formed by vertices $\{x,2,4,7,9\}$ of each copy of $P_{11}$, is illustrated in Figure~\ref{fig:P11's-ID}. Now, consider an identifying code $C$ of the graph. By Lemma~\ref{lemm:P11}(i), every copy of $P_{11}$ contains at least four vertices of $C$. By Lemma~\ref{lemm:P11}(ii), for each copy of $P_{11}$ containing \emph{exactly} four code-vertices, then $y\in C$ and vertex $x$ is dominated only by $y$. Hence there can be only one such copy, proving the lower bound.
\end{proof}

We now define a cubic graph on $12$~vertices with girth~$5$.

\begin{definition}
Let $G_{12}$ be the $12$-vertex graph with vertex set $\{0,1, \ldots, 11\}$ and edges given by a hamilitonian cycle $0-1-2-\ldots-11-0$ and $\{0,4\}$, $\{1,8\}$, $\{2,6\}$, $\{3,10\}$, $\{5,9\}$, and $\{7,11\}$.
\end{definition}

An illustration is given in Figure~\ref{fig:g12}. We remark that, alternatively, $G_{12}$ can be obtained from the Petersen graph by subdividing two edges that are at maximum distance (i.e. distance~$2$) from each other and joining the two new vertices by an edge. A third way is to take the Heawood graph, delete two adjacent vertices $x,y$ and adding an edge between the two neighbours of $x$ and an edge between the two neighbours of $y$.

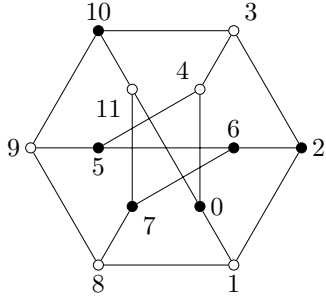
\begin{figure}[!ht]
\begin{center}
\scalebox{0.9}{\begin{tikzpicture}[join=bevel,inner sep=0.5mm]
\node[graphnode,fill](2) at (0:2) {};
\draw (2) node[right=0.1cm] {$2$};
\node[graphnode](3) at (60:2) {};
\draw (3) node[above right=0.1cm] {$3$};
\node[graphnode,fill](10) at (120:2) {};
\draw (10) node[above=0.1cm] {$10$};
\node[graphnode](9) at (180:2) {};
\draw (9) node[left=0.1cm] {$9$};
\node[graphnode](8) at (240:2) {};
\draw (8) node[below=0.1cm] {$8$};
\node[graphnode](1) at (300:2) {};
\draw (1) node[below=0.1cm] {$1$};

\node[graphnode,fill](6) at (0:1) {};
\draw (6) node[above=0.1cm] {$6$};
\node[graphnode](4) at (60:1) {};
\draw (4) node[above left=0.1cm] {$4$};
\node[graphnode](11) at (120:1) {};
\draw (11) node[below left=0.1cm] {$11$};
\node[graphnode,fill](5) at (180:1) {};
\draw (5) node[below=0.1cm] {$5$};
\node[graphnode,fill](7) at (240:1) {};
\draw (7) node[below right=0.1cm] {$7$};
\node[graphnode,fill](0) at (300:1) {};
\draw (0) node[right=0.1cm] {$0$};

\draw[-] (8)--(1)--(2)--(3)--(10)--(9)--(8)
         (6)--(5)--(4)--(0)--(11)--(7)--(6)
         (2)--(6) (3)--(4) (10)--(11) (5)--(9) (7)--(8) (0)--(1);
\end{tikzpicture}}
\end{center}
\caption{\label{fig:g12} The graph $G_{12}$ with a minimum identifying code (black vertices).}
\end{figure}

\begin{proposition}\label{prop:G12-ID}
The graph $G_{12}$ has $\M(G_{12}) = 6 = \frac{n}{2}$.
\end{proposition}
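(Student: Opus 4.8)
The plan is to establish both directions of the equality $\M(G_{12})=6$. For the upper bound, I would exhibit an explicit identifying code of size~$6$ and verify it directly. Looking at the black vertices in Figure~\ref{fig:g12}, the set $\{0,2,5,6,7,10\}$ is a natural candidate; I would check (i) that it dominates all twelve vertices, and (ii) that every pair of vertices has distinct closed-neighbourhood-intersections with the code. Since $G_{12}$ has girth~$5$, it is more efficient to invoke Lemma~\ref{lemma:girth5-ID}: one checks that $G[C]$ has no component of size~$1$ or~$2$, and that the~$1$-dominated vertices admit the required injection into $C_{\geq 3}$. This reduces the verification to a finite, routine check over twelve vertices.

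For the lower bound, I would argue that no identifying code of size~$5$ exists. The cleanest route mirrors the counting argument in Proposition~\ref{prop:heawood-LD}. Suppose $C$ is an identifying code with $|C|=5$ and $S=V(G_{12})\setminus C$. Since at most $|C|$ vertices can be $1$-dominated while the remaining vertices of $S$ must be $2$-dominated, counting edges between $C$ and $S$ gives $m(C,S)\ge |C|+2(|S|-|C|)=2|S|-|C|=2\cdot 7-5=9$. As $G_{12}$ is cubic, $m(C,S)=15-2m(C)$, forcing $m(C)\le 3$. This is a weaker constraint than in the Heawood case, so the edge count alone will not suffice; I would then combine it with the identifiability conditions of Lemma~\ref{lemma:girth5-ID}, in particular that no component of $G[C]$ may have size~$1$ or~$2$, to sharply restrict the possible configurations.

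The \textbf{main obstacle} is that $G_{12}$ is not vertex-transitive (unlike the Heawood graph), so I cannot assume without loss of generality that a particular vertex plays a given role, and the symmetry-reduction that shortened the Heawood argument is largely unavailable. I would therefore need to exploit whatever automorphisms $G_{12}$ does have (it has a nontrivial automorphism group coming from its description as a subdivided Petersen graph) to cut down cases, and then handle the remaining configurations by a careful but finite case analysis. The key structural fact I expect to lean on is that, by Lemma~\ref{lemma:girth5-ID}(i), the five code-vertices must induce a subgraph whose components all have size~$\ge 3$; with only five vertices and $m(C)\le 3$ edges forming a forest, the induced subgraph $G[C]$ is extremely constrained (essentially a single path or tree on all five vertices, or a component of size~$3$ plus one of size~$2$ which is already forbidden). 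Pinning down these few shapes and showing each fails either to dominate or to separate some pair should close the argument; the bookkeeping of which vertices remain $1$-dominated is where the work concentrates.
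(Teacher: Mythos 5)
Your upper bound is exactly the paper's (exhibit $\{0,2,5,6,7,10\}$ and verify via Lemma~\ref{lemma:girth5-ID}), but your lower-bound plan has a genuine gap: you have misread Lemma~\ref{lemma:girth5-ID}. Condition~(i) forbids only components of $G[C]$ of size \emph{exactly}~$2$; isolated code vertices are perfectly allowed --- the lemma merely requires, via condition~(ii), that each $1$-dominated vertex outside $C$ have its unique code neighbour in $C_{\geq 3}$. So with $|C|=5$ the admissible shapes of $G[C]$ are not ``essentially a single path or tree on all five vertices'': you must also handle a component of order~$4$ plus an isolated vertex, a component of order~$3$ plus \emph{two} isolated vertices, and $C$ an independent set. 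The last two are precisely where the paper's proof does its real work, and your plan would discard them as ``already forbidden''. Relatedly, your edge count is too weak even for the cases you keep: from ``at most $|C|$ vertices of $S$ are $1$-dominated'' you get $m(C,S)\geq 9$, hence $m(C)\leq 3$, which kills a connected $G[C]$ of order~$5$ (a tree has $4$ edges) but is consistent with the order-$4$-plus-isolated configuration ($m(C)=3$).

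The paper repairs both points with one refinement: since $1$-dominated vertices of $S$ must be dominated injectively by vertices of $C_{\geq 3}\subseteq C\setminus I$ (where $I$ is the set of isolated code vertices), at most $|C\setminus I|$ vertices of $S$ are $1$-dominated, yielding $m(C,S)\geq 9+|I|$ and hence $m(C)\leq 3-\frac{|I|}{2}$. This immediately eliminates the order-$5$ and order-$4$-plus-isolated cases. For the order-$3$-plus-two-isolated case, the bound holds with equality, which pins down the edge distribution exactly and forces two vertices of $S$ to be adjacent to both isolated code vertices, creating a $4$-cycle and contradicting girth~$5$. For $C$ independent, the count forces $m(C,S)=15$, hence one vertex of $S$ with three code neighbours and a perfect matching on the remaining six; enumerating the edges of $G_{12}$ whose four endpoint-neighbours extend to an independent set of size~$5$ leaves only the edges $\{3,4\}$ and $\{7,8\}$ and four candidate sets, each of which fails to identify. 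Note that none of this uses symmetry, so your concern about $G_{12}$ not being vertex-transitive is moot; what your proposal is missing is the refined count $m(C)\leq 3-\frac{|I|}{2}$ and any treatment at all of codes containing isolated vertices.
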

\begin{proof}
An identifying code of size $6$ is given for instance by the set $\{0,2,5,6,7,10\}$, implying that $\M(G_{12}) \le 6$.

To prove our claim, it is sufficient to show that there is no identifying code on $5$ vertices. Assume for contradiction that there is an identifying code $C$ of $G_{12}$ of size $5$. Let $I$ be the set of isolated vertices in $C$, $S = V(G_{12})\setminus C$. Let $m(C)$ and $m(C,S)$ count the number of edges between vertices of $C$ and the edges between $C$ and $S$, respectively. By Lemma~\ref{lemma:girth5-ID}, at least $|S|-|C\setminus I|$ vertices from $S$ have to be $2$-dominated. Hence, there are at least $|C\setminus I|+2(|S|-|C\setminus I|)$ edges from $S$ to $C$. On the other hand, there are $3|C|-2m(C) = 15 - 2m(C)$ edges from $C$ to $S$. Hence,
\begin{align*}
15-2m(C) = m(C,S) &\ge |C\setminus I|+ 2(|S|-|C\setminus I|) \\
                                    &= 2|S| - |C\setminus I|\\
                                    &= 2(12-|C|)-|C|+|I| =9+|I|,
\end{align*}
which gives 
\begin{align}
m(C) \le 3 - \frac{|I|}{2}.\label{eq:m(C)}
\end{align}

By Lemma~\ref{lemma:girth5-ID}(i), the subgraph induced by $C$ consists either of a single component of order~$5$, a component of order~$4$ and an isolated vertex, a component of order~$3$ and two isolated vertices, or $C$ is an independent set. We distinguish now between these cases.

\vspace{0.2cm}\noindent{\it Case a: $C$ consists of a single component of order $5$.} Then $m(C)\geq 4$ and thus, by Inequality~\eqref{eq:m(C)}, $4\leq 3$, which is a contradiction.

\vspace{0.2cm}\noindent{\it Case b: $C$ consists of a component of order $4$ and an isolated vertex.} Again, by Inequality~\eqref{eq:m(C)}, $3\leq m(C)\leq 2.5$, a contradiction.

\vspace{0.2cm}\noindent {\it Case c: $C$ consists of a component $C_c$ of order~$3$ and two isolated vertices $x$ and $y$.}
Then $C_c$ is a path of length~$2$, say $uvw$, and $m(C)=2=3-\frac{|I|}{2}$, giving equality in the above inequality chain. Hence, three vertices from $S$ have exactly one neighbour in $C$, while the other four have exactly two neighbours in $C$. With $S = \{s_1,s_2,s_3,s_4,s_5,s_6,s_7\}$, let us say that $s_1$, $s_2$, and $s_3$ are the vertices being dominated once and let $\{s_1,u\}$, $\{s_2,v\}$, $\{s_3,w\}$ be the edges hereby involved. Then the edges incident with $v$ have been all assigned, while $u$ and $w$ can still contribute dominating one more vertex from $S$. However, to $2$-dominate the vertices in $\{s_4,s_5,s_6,s_7\}$, necessarily two of them will be adjacent to both $x$ and $y$, building a cycle of length~$4$, which is not allowed. Thus, this case is not possible.

\vspace{0.2cm}\noindent{\it Case d: $C = I = \{x_1,x_2,x_3,x_4,x_5\}$.} By Lemma~\ref{lemma:girth5-ID}(ii), all vertices from $S$ have to be $2$-dominated by $I$, and hence $m(C,S)\geq 14$. But since $G_{12}$ is cubic, each vertex of $S$ is incident to at most one further edge. Since $|S|=7$, at most six vertices in $S$ can be paired, and $m(C,S)\geq 15$. On the other hand, each vertex of $C$ has three neighbours, hence $m(C,S)\leq 15$. This implies that while one vertex from $S$, say $s_7$, has exactly three neighbours in $I$, the other six vertices from $S$ have exactly two neighbours in $I$. Since $G_{12}$ is cubic, the vertices in $\{s_1,s_2,s_3,s_4,s_5,s_6\}$ are paired by a matching, say $\{s_1,s_2\}$, $\{s_3,s_4\}$, $\{s_5,s_6\}$. Consider the edge $\{s_1,s_2\}$ and its neighbours in $I$, say $\{x_1,x_2,x_3,x_4\}$. Going through all edges from the graph $G_{12}$ and considering their four independent neighbours, there are only two possibilities where the corresponding independent sets can be completed to an independent set of size~$5$. These are the edges $\{3,4\}$ and $\{7,8\}$ which give each two possible independent sets $\{0,2,5,7,10\}$, $\{0,2,5,8,10\}$ and $\{1,3,6,9,11\}$, $\{1,4,6,9,11\}$. Hence, $C=I$ has to be one of these sets. However, it is easy to check that none of them is an identifying code.

\vspace{0.2cm}
 Hence, $G_{12}$ has no identifying code of size $5$ and $\M(G_{12}) = 6$.
\end{proof}

\section{Conclusion}\label{sec:conclu}

We proved the two tight upper bounds $\LD(G)\leq\frac{n}{2}$ and $\M(G) \leq \frac{5}{7}n$ for graphs $G$ of girth at least~$5$ and minimum degree at least~$2$, as well as improved bounds for cubic graphs. While the first bound is asymptotically tight for large values of $n$, we do not know whether this holds for the latter one. 

For minimum degree at least~$3$, either our bounds are not tight, or we have not found the graphs with highest value of parameters $\LD$ and $\M$. In particular, the question whether for every graph $G$ of girth at least~$5$ and minimum degree at least~$3$, we have $\M(G)\leq\frac{n}{2}$ seems intriguing. By Proposition~\ref{prop:G12-ID} this would be tight for the graph $G_{12}$. Though we have tried to get better bounds when $\delta\geq 3$, it seems that our technique is not powerful enough for such an improvement (at least without any new idea).

To conclude, we remark that another interesting question would be to conduct a similar study for the \emph{open location-domination number} and the \emph{locating-total domination number}, related concepts introduced in~\cite{SS10} and~\cite{hhh06}, respectively.

\vspace{0.2cm}
\noindent\textbf{Acknowledgements.} We thank the anonymous referee for carefully reading the paper, therefore helping to improve its presentation.


\begin{thebibliography}{11}

\bibitem{AS08} N.~Alon and J.~H.~Spencer. \emph{The probabilistic method}, 3rd edition, Wiley-Interscience, 2008.

\bibitem{A90} N.~Alon. Transversal numbers of uniform hypergraphs, \emph{Graphs and Combinatorics} 6(1):1--4, 1990. 

\bibitem{B80} L.~Babai. On the complexity of canonical labeling of strongly regular graphs. \emph{SIAM Journal on Computing} 9(1):212--216, 1980.

\bibitem{BCHL04} N. Bertrand, I. Charon, O. Hudry, A. Lobstein. Identifying and locating-dominating codes on chains and cycles. \emph{European Journal of Combinatorics} 25(7):969--987, 2004.

\bibitem{BS07} B. Bollob\'as and A. D. Scott. On separating systems. \emph{European Journal of Combinatorics} 28:1068--1071, 2007.

\bibitem{B72} J.~A.~Bondy. Induced subsets. \emph{Journal of Combinatorial Theory, Series B} 12(2):201--202, 1972.

\bibitem{DHHHLRS03} K.~M.~J. De Bontridder, B.~V. Halld\'orsson, M.~M. Halld\'orsson, C.~A.~J. Hurkens, J.~K. Lenstra,  R. Ravi, L. Stougie. Approximation algorithms for the test cover problem. \emph{Mathematical Programming Series B} 98:477--491, 2003.

\bibitem{CCCHL08} E.~Charbit, I.~Charon, G.~Cohen, O.~Hudry, A.~Lobstein. Discriminating codes in bipartite graphs: bounds, extremal cardinalities, complexity. \emph{Advances in Mathematics of Communications} 2(4):403--420, 2008.


\bibitem{F} F.~Foucaud. \emph{Combinatorial and algorithmic aspects of identifying codes in graphs}. PhD thesis, Universit\'{e} Bordeaux 1, France, December 2012. Available online at \url{http://tel.archives-ouvertes.fr/tel-00766138}.

\bibitem{FP11} F.~Foucaud and G.~Perarnau. Bounds for identifying codes in terms of degree parameters. \emph{The Electronic Journal of Combinatorics} 19:P32, 2012.

\bibitem{GGM} D. Garijo, A. Gonz\'alez, A. M\'arquez. The difference between the metric dimension and the determining number of a graph. \emph{Applied Mathematics and Computation} 249:487--501, 2014.

\bibitem{hhh06} T. W. Haynes, M. A. Henning, J. Howard. Locating and total dominating sets in trees. \emph{Discrete Applied Mathematics} 154:1293--1300, 2006.

\bibitem{KCL98} M.~G.~Karpovsky, K.~Chakrabarty, L.~B.~Levitin. On a new class of codes for identifying vertices in graphs. \emph{IEEE Transactions on Information Theory} 44:599--611, 1998.

\bibitem{KPSV04} J.~H.~Kim, O.~Pikhurko, J.~Spencer, O.~Verbitsky. How complex are random graphs in First Order logic? \emph{Random Structures and Algorithms} 26(1-2):119--145, 2005.

\bibitem{KS09} A. V. Kostochka and B. Y. Stodolsky. An upper bound on the domination number of $n$-vertex connected cubic graphs. \emph{Discrete Mathematics} 309:1142--1162, 2009.

\bibitem{LTCS07} M.~Laifenfeld, A.~Trachtenberg, R.~Cohen, D.~Starobinski. Joint monitoring and routing in wireless sensor networks using robust identifying codes. \emph{Proceedings of IEEE Broadnets 2007}, pages 197--206, September 2007.

\bibitem{biblio} A.~Lobstein. Watching systems, identifying, locating-dominating and discriminating codes in graphs: a bibliography. \url{http://www.infres.enst.fr/~lobstein/debutBIBidetlocdom.pdf}

\bibitem{MS89} W.~McCuaig and B.~Shepherd. Domination in graphs with minimum degree two. \emph{Journal of Graph Theory} 13(6):749--762, 1989.

\bibitem{MS85} B.~M.~E.~Moret and H.~D.~Shapiro. On minimizing a set of tests. \emph{SIAM Journal on Scientifical and Statistical Computation} 6(4):983--1003, 1985.

\bibitem{O62} O. Ore. Theory of graphs. American Mathematical Society
  Colloquium Publications Vol. XXXVIII, American Mathematical
  Society, Providence, R.I., 1962.

\bibitem{R96} B.~Reed. Paths, stars and the number three. \emph{Combinatorics, Probabilities and Computing} 5(3):277--295, 1996.

\bibitem{R61} A. R\'enyi. On random generating elements of a finite Boolean algebra. \emph{Acta Scientiarum Mathematicarum Szeged} 22:75--81, 1961.

\bibitem{SS10} S.~J.~Seo and P.~J.~Slater. Open neighborhood locating-dominating sets. \emph{The Australasian Journal of Combinatorics} 46:109--120, 2010.

\bibitem{S87} P.~J.~Slater. Domination and location in acyclic graphs. \emph{Networks} 17(1):55--64, 1987.

\bibitem{S88} P.~J.~Slater. Dominating and reference sets in a graph. \emph{Journal of Mathematical and Physical Sciences} 22(4):445--455, 1988.

\bibitem{S95} P.~J.~Slater. Locating dominating sets and locating-dominating sets. Graph Theory, Combinatorics, and Applications: Proceedings of the 7th Quadrennial International Conference on the Theory and Applications of Graphs 2:1073--1079, Wiley, 1995.

\bibitem{UTS04} R.~Ungrangsi, A.~Trachtenberg, D.~Starobinski. An implementation of indoor location detection systems based on identifying codes. \emph{Proceedings of Intelligence in Communication Systems, INTELLCOMM 2004}, \emph{Lecture Notes in Computer Science} 3283:175--189, 2004.

\bibitem{XSC06} H. Xing, L. Sun, X. Chen. Domination in graphs of minimum degree five. \emph{Graphs and Combinatorics} 22(1):127--143, 2006.

\end{thebibliography}
\end{document}